\numberwithin{equation}{section}
\definecolor{ogreen}{rgb}{0,0.6,0}
\g@addto@macro\bfseries{\boldmath}
\DeclareMathOperator{\cat}{cat}
\DeclareMathOperator{\argmin}{argmin}
\DeclareMathOperator{\supp}{supp}
\newtheorem{theorem}{Theorem}[]
\newtheorem{lemma}[theorem]{Lemma}
\newtheorem{proposition}[theorem]{Proposition}
\newtheorem{definition}[theorem]{Definition}
\newtheorem*{theorem*}{Theorem}
\theoremstyle{remark}
\newtheorem{remark}[theorem]{Remark}
\numberwithin{equation}{section}
\numberwithin{theorem}{section}
\title[On the van der Waals--Allen--Cahn--Hilliard equation]{Multiple solutions for the\\ van der Waals--Allen--Cahn--Hilliard equation\\ with a volume constraint}
\author[V. Benci]{Vieri Benci}
\author[S. Nardulli]{Stefano Nardulli}
\author[P. Piccione]{Paolo Piccione}
\subjclass[2010]{35J20, 35J25, 58E05}
\address{\begin{tabular}{lll}
Universit\`a di Pisa & & Universidade Federal do ABC\\
Dipartimento di Matematica & & Centro de Matem\'atica Cogni\c{c}\~ao Computa\c{c}\~ao\\
Via Filippo Buonarroti 1/c  & & Avenida dos Estados, 5001\\
56123 -- Pisa & &  Santo Andr\'e, SP, CEP 09210-580\\
Italy & & Brazil\\
\emph{E-mail}: {\tt vieri.benci@unipi.it} & &\emph{E-mail}: {\tt stefano.nardulli@ufabc.edu.br}
\\[.5cm]  Universidade de S\~ao Paulo\\ Departamento de Matem\'atica\\ Rua do Mat\~ao 1010\\
S\~ao Paulo, SP 05508--090, Brazil\\ \emph{E-mail}: {\tt piccione@ime.usp.br}
\end{tabular}
}
\date{July 26, 2019}
\thanks{This work was carried out during a visit of V.B.\ at the \emph{Universidade Federal do Rio de Janeiro} and at the \emph{Universidade de S\~ao Paulo}, Brazil. V.B. is very grateful to all faculty and staff at these two institutions, who provided excellent work conditions.}
\begin{document}

\begin{abstract}
We give multiplicity results for the solutions of a nonlinear elliptic equation, with an asymmetric double well potential of Van der Waals-Allen--Cahn--Hilliard type, satisfying a linear volume constraint, on a bounded Lipschitz domain $\Omega\subset\mathds R^N$. The number of solutions is estimated in terms of topological and homological invariants of the underlying domain $\Omega$.
\end{abstract}

\maketitle

\renewcommand{\contentsline}[4]{\csname nuova#1\endcsname{#2}{#3}{#4}}
\newcommand{\nuovasection}[3]{\hbox to \hsize{\vbox{\advance\hsize by -1cm\baselineskip=10pt\parfillskip=0pt\leftskip=3.5cm\noindent\hskip -2.5cm \textbf{#1}\leaders\hbox{.}\hfil\hfil\par}$\,$#2\hfil}}
\newcommand{\nuovasubsection}[3]{\hbox to \hsize{\vbox{\advance\hsize by -1cm\baselineskip=10pt\parfillskip=0pt\leftskip=4cm\noindent\hskip -2cm #1\leaders\hbox{.}\hfil\hfil\par}$\,$#2\hfil}}

\tableofcontents

\section{Introduction}
\subsection{Formulation of the problem ($\mathrm P_{V,\varepsilon}$)}
In this paper we are concerned with the existence of multiple solutions of
the following nonlinear problem ($\mathrm P_{V,\varepsilon}$): for fixed positive constants $V$ and $\varepsilon$, find $
u\in H_{0}^{1}(\Omega )$, and $\lambda \in \mathds{R}$ such that
\begin{equation}\label{Eq:ConstrainedAllenCahn}
\begin{aligned}
-&\varepsilon ^{2}\Delta u+W^{\prime }(u) =\lambda,\\&
\int_{\Omega }u(x)\,\mathrm dx =V,  
\end{aligned}
\end{equation}
where $\Omega$ is an open bounded Lipschitz domain in $\mathds{R}^{N}$, and $W\colon\mathds R\to\mathds R$ is a $C^2$ function which satisfies the following assumptions:
\begin{eqnarray}\label{Eq:Potential}
&& W(0)=W'(0)=0;\quad W''(0)>0,
\\ \label{Eq:Potential2}
&&\exists\ s_0\in\left]0,+\infty\right[\ \text{ s.t.}\quad -m:=W(s_0)=\min\big\{W(s):s\in\mathds{R}\big\}<0,
\\
\label{Eq:WeakUpperBarriers}
&&W'(s)>0,\quad \forall\, s\in\left]s_0, s_0+\delta\right], \text{ for some } \delta>0.
\end{eqnarray} 
In particular \eqref{Eq:WeakUpperBarriers}, is fulfilled for potentials $W$ of class $C^2$ when $s_0$ is a minimum point and 
\begin{equation}\label{Eq:WeakUpperBarriers0}
W''(s_0)>0.
\end{equation}
In what follow we denote by $s_0$ the minimum positive real number for which \eqref{Eq:Potential2} is satisfied.
These assumptions imply that the potential $W$ is not an even function, as opposed to the standard Allen--Cahn potential, which has symmetric minima. 
Moreover, such a $W$ takes different values at the two local minima. We will refer to this situation by saying that $W$ is \emph{asymmetric}.
However, this entails no essential difference in the geometry of the solutions of the problem, see discussion in Section~\ref{sub:discussion}.

For the central result of the paper, we also need an asymptotic growth condition for $W$, given by assuming the existence of positive constants $A$, $B$ such that 
\begin{equation}\label{Eq:Potential0}
\big|W'(s)\big|\le A+B\vert s\vert^{p-1},\quad p<\frac{2N}{N-2}\qquad \text{($p<\infty$ if $N=1,2$)}.
\end{equation}
The graph of a typical potential function $W$ satisfying the above axioms is given in Figure~\ref{fig:graphpotential}.
\begin{figure}
\label{fig:graphpotential}
\begin{center}
\includegraphics[scale=.5]{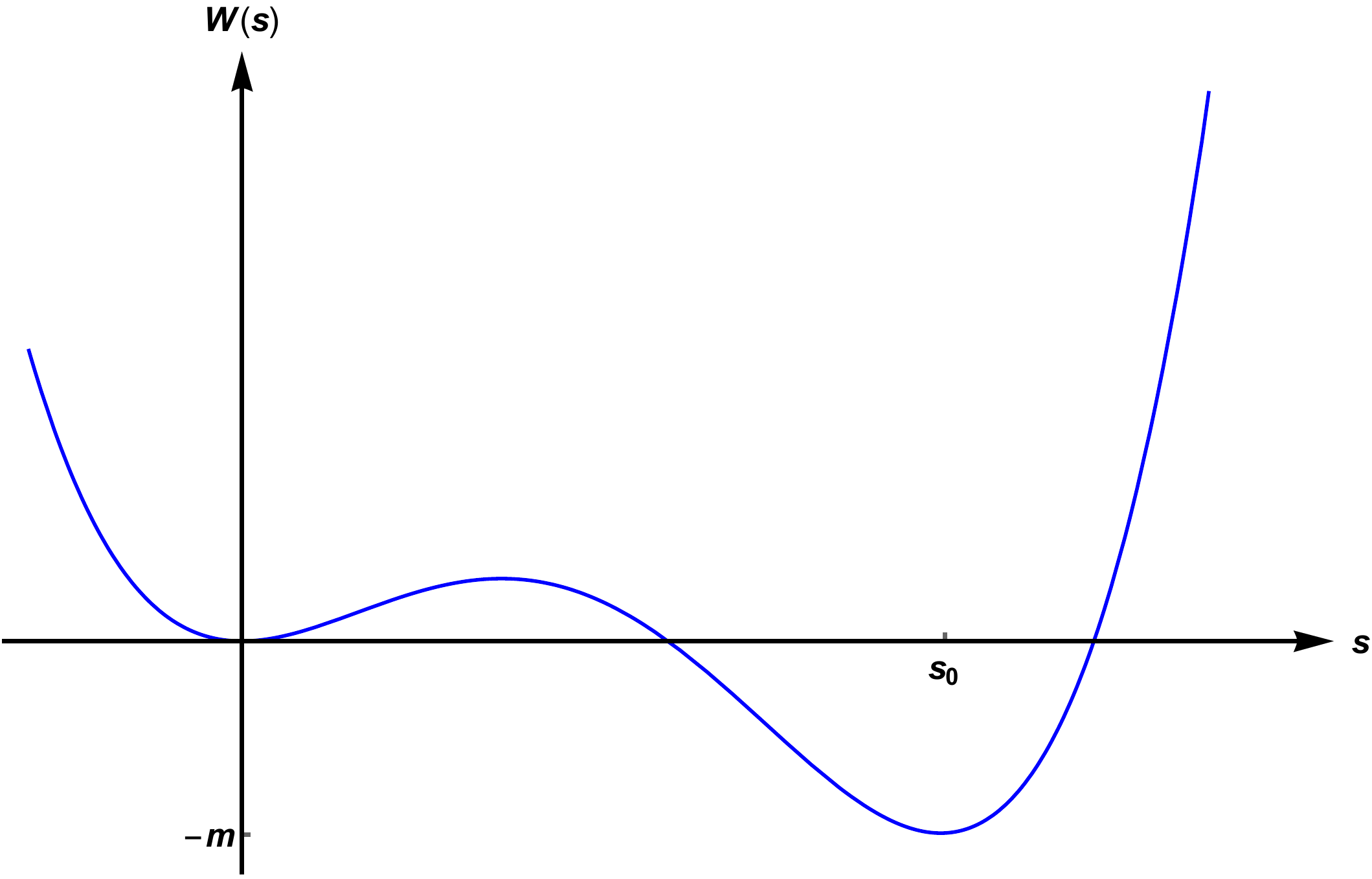}
\caption{The graph of a typical  Van der Waals-Allen--Cahn-Hilliard potential, satisfying \eqref{Eq:Potential}, \eqref{Eq:Potential2}, and \eqref{Eq:WeakUpperBarriers0}.}
\end{center}
\end{figure}%

\begin{remark}\label{thm:rem_no_aypmt_ass}
A simple and instructive example of potentials satisfying \eqref{Eq:Potential}, \eqref{Eq:Potential2}, \eqref{Eq:WeakUpperBarriers} and  is given by the non-symmetric Allen-Cahn-Hilliard potential: 
\begin{equation}\label{Eq:Potential3}
W(s)=s^2(s-s_1)(s-s_2),
\end{equation}%
where $0<s_1<s_0<s_2$. The usual Allen-Cahn potential is taken with $s_1=s_2$. The reader should observe that, when $N\ge3$, assumption~\eqref{Eq:Potential0} is \emph{not} satisfied by \eqref{Eq:Potential3}. However, we will also discuss a result of multiplicity of solutions without assuming the growth condition.
\end{remark}

Equations of type \eqref{Eq:ConstrainedAllenCahn}, such as
the Allen-Cahn equation \cite{AllenCahn1979} and the Cahn-Hilliard equation \cite{CahnHilliard1958} (see also the book  \cite{Presutti2009}), appear naturally in many problems of mathematical physics and applied mathematics. In theoretical biology equations of this type model pattern formation related
to solutions which are not absolute minima of the energy \cite{Murray1981}. From the purely mathematical point of view, equation \eqref{Eq:ConstrainedAllenCahn} is also interesting due to its relation with the theory of constant mean curvature hypersurfaces (cf. \cite{Modica1987}, \cite{HutchinsonTonegawa2000}, \cite{PacardRitore2003}).  Our investigation aims naturally at establishing multiplicity results for mean curvature hypersurfaces via a limiting procedure with the parameter $\varepsilon$ going to zero. The results of the present paper, dealing solely with the case of Euclidean spaces, constitute the first important step; an extension to the general case of (compact) Riemannian manifolds is currently under investigation, see \cite{BenNarOsoPic2018}.
\subsection{The linearized problem}
Assume that $(u,\lambda)\in H^1_0(\Omega)\times\mathds R$ is a solution of \eqref{Eq:ConstrainedAllenCahn}. Linearizing the problem along $u$ gives the following:
\begin{equation}\label{Eq:ConstrainedAllenCahn_lin}
\begin{aligned}
-&\varepsilon ^{2}\Delta \vartheta+W''(u)\vartheta =\Lambda,\\&
\int_{\Omega }\vartheta(x)\,\mathrm dx =0.
\end{aligned}
\end{equation}
\begin{definition}\label{thm:defdegeneratesol}
A solution  $(u,\lambda)$ of Problem \emph{($\mathrm P_{V,\varepsilon}$)} is said to be \emph{degenerate} if \eqref{Eq:ConstrainedAllenCahn_lin} admits a non-trivial solution $(\vartheta,\Lambda)\in H^1_0(\Omega)\times\mathds R$, and \emph{nondegenerate} otherwise.
\end{definition}
It is not hard to see that $(u,\lambda)$ is a nondegenerate solution of ($\mathrm P_{V,\varepsilon}$) when $u$ is a nondegenerate critical point of the associated energy functional, see Section~\ref{sub:variationalframe}.
\subsection{Statement of the existence results}
The focus of this paper is on the existence of solutions which are not
necessarily minima of the associated energy functional (see Section~\ref{sub:variationalframe} below), and on their multiplicity. We recall that in the literature there are many results relative to the existence of multiple solutions which are critical points of the energy. However in these cases, usually it is exploited the fact that $W(0)$ is not a minimum value of $W$ (see the book \cite{RabinowitzCBMS1986}). In other references, multiplicity results are obtained for even potentials $W$, in which case  the topology of the real projective space plays a crucial role. In all these situations, the solutions found present many nodal regions. \smallskip

In the present paper, we find multiple solutions exploiting the
topology of the domain $\Omega$. A lower bound for the number of solutions will be given using Lusternik--Schnirelman theory and Morse theory.\smallskip

 For a topological space $X$, let us denote by $\cat(X)$ the \emph{Lusternik--Schnirelman category} of $X$, see Definition~\ref{Def:Lusternik-SchnirelmannCategory}.

\begin{theorem}\label{Thm:Main1}
Under assumptions \eqref{Eq:Potential}, \eqref{Eq:Potential2}, \eqref{Eq:WeakUpperBarriers}, \eqref{Eq:Potential0}, for $V>0$ sufficiently small, there exists $\varepsilon(V)>0$ such that for all $\varepsilon\in\left]0,\varepsilon(V)\right]$,  Problem \emph{($\mathrm P_{V,\varepsilon}$)} admits:
\begin{itemize}
\item at least one solution if $\Omega$ is contractible;
\item at least $\cat(\Omega )+1$ distinct solutions if $\Omega$ is non-contractible.
\end{itemize} 
Moreover, if $\Omega$ is non-contractible and all solutions of  Problem \emph{($\mathrm P_{V,\varepsilon}$)} are \emph{nondegenerate} $($Definition~\ref{thm:defdegeneratesol}$)$, then there are at least 
$2P_{1}(\Omega)-1$ distinct solutions, where $P_1(\Omega)$ is the sum of the Betti numbers of $\Omega$.  
\end{theorem}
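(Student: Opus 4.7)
The plan is to apply the Benci--Cerami photography/barycentre method together with Lusternik--Schnirelman category and Morse theory to the constrained variational formulation of $(\mathrm P_{V,\varepsilon})$. Solutions correspond to critical points of
\begin{equation*}
J_\varepsilon(u)=\frac{\varepsilon^2}{2}\int_\Omega|\nabla u|^2\,\mathrm dx+\int_\Omega W(u)\,\mathrm dx
\end{equation*}
on the affine manifold $M_V=\bigl\{u\in H_0^1(\Omega):\int_\Omega u\,\mathrm dx=V\bigr\}$, with the Lagrange multiplier providing $\lambda$. The preliminary work is to show that $J_\varepsilon$ is of class $C^2$, bounded below, and satisfies Palais--Smale on $M_V$; compactness rests on the subcritical growth \eqref{Eq:Potential0}, and on a truncation above $s_0$ made admissible by \eqref{Eq:WeakUpperBarriers} to keep minimizing sequences in a uniformly bounded pointwise range.

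The heart of the argument is to trap the topology of $\Omega$ inside a suitable sublevel $J_\varepsilon^{c_0}$. For $V$ small, the low-energy competitors are expected to look like bubbles concentrated near a point $x\in\Omega$, modelled on a smoothing of $s_0\chi_{B_{r(V)}(x)}$, whose energy gain comes from the well at $s_0$ (assumption \eqref{Eq:Potential2}). With this model I would construct a \emph{photography} $\Phi\colon\Omega^-\to J_\varepsilon^{c_0}$ on an inner parallel $\Omega^-$ of $\Omega$, sending $x$ to such a bump of mass $V$, and a \emph{barycentre map} $\beta\colon J_\varepsilon^{c_0}\to\Omega^+$ into an outer tubular neighbourhood, via a weighted center-of-mass formula. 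The key analytic point is a quantitative concentration estimate showing that, for $V$ and then $\varepsilon$ small, any $u\in J_\varepsilon^{c_0}$ has essentially all of its mass inside a ball of radius comparable to $r(V)$; this makes $\beta$ well-defined and forces $\beta\circ\Phi$ to be homotopic to the inclusion $\Omega^-\hookrightarrow\Omega^+$. Since both $\Omega^-$ and $\Omega^+$ deformation retract to $\Omega$, one obtains $\cat(J_\varepsilon^{c_0})\ge\cat(\Omega)$ together with the analogous inequality for Poincar\'e polynomials.

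Lusternik--Schnirelman theory applied to $J_\varepsilon|_{M_V}$ then yields at least $\cat(J_\varepsilon^{c_0})\ge\cat(\Omega)$ critical points inside the sublevel, hence at least one solution in the contractible case. When $\Omega$ is non-contractible, the inclusion $J_\varepsilon^{c_0}\hookrightarrow M_V$ is not a homotopy equivalence, and a standard linking/deformation argument on the pair $(M_V,J_\varepsilon^{c_0})$ produces a further critical point at a strictly higher level, yielding the $\cat(\Omega)+1$ bound. Under the nondegeneracy assumption, the Morse relations for $J_\varepsilon|_{M_V}$ for the same pair, combined with the homological photography estimate $P_t(J_\varepsilon^{c_0})\ge P_t(\Omega)$, give a Morse polynomial bounded below by $P_t(\Omega)+t\bigl(P_t(\Omega)-1\bigr)$; specialising at $t=1$ yields the $2P_1(\Omega)-1$ count.

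The main obstacle is the quantitative concentration estimate that supports the barycentre map. One needs a volume-constrained, asymmetric analogue of the Modica--Mortola compactness picture, strong enough to guarantee that the mass of any sublevel competitor lies close to a single point of $\Omega^+$ despite the fact that $u\equiv 0$ is only a local minimum of the potential. This is exactly where the nested smallness of the parameters enters: $V$ is chosen first small enough that an optimal bubble fits strictly inside $\Omega$, and then $\varepsilon(V)$ is chosen small enough that the Dirichlet contribution $\tfrac{\varepsilon^2}{2}\int|\nabla u|^2$ of any competitor at level $c_0$ becomes quantitatively negligible with respect to the volume-driven well term.
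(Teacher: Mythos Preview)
Your overall architecture matches the paper's: constrained functional $E_\varepsilon$ on the affine hyperplane $\mathfrak M^V$, Palais--Smale via subcritical growth, photography map $\Phi\colon\Omega^-\to E_\varepsilon^c\cap\mathfrak M^V$, barycentre $\beta$ into $\Omega^+$, and then Theorems~\ref{Thm:FotoLS} and~\ref{Thm:FotoMorse}. Where you diverge from the paper is precisely at the step you flag as the obstacle, and the paper's resolution is rather different from the Modica--Mortola concentration picture you sketch.

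The paper does \emph{not} argue that low-energy competitors concentrate their mass near a single point. Instead it builds the model bump as the \emph{actual minimizer} $U_\gamma$ of an auxiliary obstacle problem on all of $\mathds R^N$ (Section~\ref{sec:auxpb}), proves via variational-inequality regularity and an ODE argument that $U_\gamma$ has compact support in a ball of radius $R_\gamma\asymp\gamma^{1/N}$ (Theorems~\ref{Thm:AsymptoticProblemStatement} and~\ref{Thm:AsymptoticEquivalentOfTheRadius}), and sets the sublevel $c$ \emph{equal} to the energy of the rescaled $U_\gamma$. The well-definedness of $\beta$ then follows from a clean symmetrization comparison: if $\beta(u)\notin\Omega^+_r$, then $u$ is supported in the complement of a ball centred at $\beta(u)$, and one shows $m^*(\varepsilon,\rho,V)>m(\varepsilon,\varepsilon R_\gamma,V)=c$ by Schwarz rearrangement plus the strict Brothers--Ziemer equality case (Theorem~\ref{Thm:BrothersZiemer}, with Gidas--Ni--Nirenberg supplying the regularity hypothesis~\eqref{Eq:BrothersZiemerStatement0}). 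No mass-concentration estimate is needed, only this strict energy gap.

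Your smoothed-indicator model for the photography map would land above, not at, the true minimum level, and then the barycentre argument as written would not close: you would need to show that \emph{every} $u$ with $E_\varepsilon(u)\le c_0$ has barycentre in $\Omega^+$, and a Modica--Mortola compactness statement in the volume-constrained asymmetric setting is both nonstandard and not obviously sharp enough for this purpose at fixed $\varepsilon$. The paper's choice of $c$ as the exact auxiliary minimum, together with the symmetrization inequality, sidesteps this entirely. Also, the paper does not truncate $W$ above $s_0$ for Palais--Smale; it uses \eqref{Eq:Potential0} directly (Lemma~\ref{Lemma:PS}), and the upper barrier from \eqref{Eq:WeakUpperBarriers} enters only in bounding $U_\gamma\le s_0$.
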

It is a natural conjecture that the nondegeneracy assumption in the last statement of Theorem~\ref{Thm:Main1} should hold for \emph{generic} choices of the quadruple $(\Omega,W,V,\varepsilon)$. It is also interesting that the last claim of Theorem~\ref{Thm:Main1} holds without the nondegeneracy assumption, provided that the solutions are counted with a suitable notion of \emph{multiplicity}, see Definition~\ref{Eq:DefTopologicallyNondegenerate}.\smallskip

The method employed for the construction of the solutions of \eqref{Eq:ConstrainedAllenCahn} also provides bounds for the energy and the Morse index, see Proposition~\ref{thm:boundsenergyindex} below.

\subsection{A brief discussion on the assumptions}\label{sub:discussion}
In the proof of our results, we will use assumptions \eqref{Eq:Potential} to deduce that $W(s)>0$ for $s<0$ (needed in Lemma~\ref{Lemma:WellPosedBarycenters}), that $W(s)\ge -ks$ for some $k>0$ and for $s>0$ small (needed in the proof of Theorem~\ref{Thm:AsymptoticProblemStatement}). Assumption \eqref{Eq:Potential2}, i.e.,  the fact that the absolute minimum of $W$ is negative, is used to deduce that the minimum of the functional \eqref{eq:mainfunctional} is negative, which plays a crucial role in the proof of Theorem~\ref{Thm:AsymptoticProblemStatement}. Namely, this fact will imply that the solution $U_\gamma$ of a certain auxiliary problem (see Section~\ref{sec:auxpb}) has compact support.  Studying the regularity of such a function $U_\gamma$ will require a rather involved analysis of a certain variational inequality, whose solutions are subject to an affine constraint, which is discussed in Sections~\ref{sub:regularityconstr} ,  \ref{sub:existenceradial}, and \ref{sub:asymptotics}. It is important to remark, however, that the fact that $W$ takes different values at the two  local minima, is irrelevant for the geometry of the solutions of the problem. Namely, given a potential $W$ as above, one can consider a linear perturbation of the form $\widetilde W(s)=W(s)+As$, with $A>0$. When $A$ is suitably chosen, the new potential $W$ has two global minima at the zero level; clearly, a pair $(u,\lambda)$ is a solution of Problem ($\mathrm P_{V,\varepsilon}$) with the potential $W$ if and only if $(u,\lambda-A)$ is a solution of Problem ($\mathrm P_{V,\varepsilon}$) with potential $\widetilde W$.

Finally assumption \eqref{Eq:WeakUpperBarriers} is used to guarantee that solutions of the auxiliary minimization problem are bounded from above and \eqref{Eq:Potential} is used to show that solutions of the auxiliary minimization problem are bounded from below. The subcritical growth condition imposed by  \eqref{Eq:Potential0} is needed for technical reason, as it makes the corresponding variational problem well defined in the appropriate Sobolev setting. 

In a forthcoming paper we will develop a theory that allows to obtain a multiplicity result that does not employ the subcritical growth condition~\eqref{Eq:Potential0}. This will be obtained by showing suitable \emph{a priori bounds} for the low energy solutions, including bounds on the corresponding Lagrange multiplier.

\subsection{The variational framework} \label{sub:variationalframe}
Under assumption~\eqref{Eq:Potential0}, solutions of Problem ($\mathrm P_{V,\varepsilon}$) are characterized as critical points of the energy functional \[E_\varepsilon\colon H^1_0(\Omega)\longrightarrow\mathds R\]
defined by:
\begin{equation}\label{eq:mainfunctional}
E_{\varepsilon}(u)=\frac{\varepsilon^{2}}{2}\int_{\Omega} \left\vert
\nabla u\right\vert ^{2}\,\mathrm dx+\int_{\Omega} W\big(u(x)\big)\,\mathrm dx,
\end{equation}
under the constraint 
\begin{equation*}
\int_{\Omega} u(x)\,\mathrm dx=V.
\end{equation*}
Assumption \eqref{Eq:Potential0} guarantees that $E_\varepsilon$ is a well defined functional on $H_0^1(\Omega)$ (see for instance  \cite[Proposition~B.10]{RabinowitzCBMS1986}) which is of class $C^2$. The differential of the functional $E_\varepsilon$ is given by:
\[\phantom{\quad u,v\in H_0^1(\Omega).}E_\varepsilon'(u)v=\varepsilon^2\int_\Omega\nabla u\cdot\nabla v\,\mathrm dx+\int_\Omega W'(u)\cdot v\;\mathrm dx,\quad u,v\in H_0^1(\Omega).\]
Moreover, assumption~\eqref{Eq:Potential2} implies that $E_\varepsilon$ is bounded from below:
\begin{equation}\label{eq:Eeboundedbelow}
\phantom{\quad\forall\,u\in H^1_0(\Omega).}
E_\varepsilon(u)\ge\int_\Omega W\big(u(x)\big)\,\mathrm dx\ge-m\,\vert\Omega\vert,\quad\forall\,u\in H^1_0(\Omega).
\end{equation}
\subsection{Bounds on the energy and the Morse index}
In view to applications to the constant mean curvature problem in
Riemannian manifolds, which requires taking limits to the singular case $\varepsilon\to0$, one needs uniform estimates of the modulus of $\frac{E_\varepsilon}\varepsilon$ and the Morse index of the families of solutions $(u_\varepsilon)_\varepsilon$. The methods developed in the paper allow to obtain the following result:
\begin{proposition}\label{thm:boundsenergyindex}
Under the assumptions of Theorem~\ref{Thm:Main1}, for $V>0$ sufficiently small and for all $\varepsilon\in\left]0,\varepsilon_0(V)\right]$, at least $\cat(\Omega)$ solutions of Problem $(\mathrm P_{V,\varepsilon})$ have energy $\frac{E_\varepsilon}\varepsilon$ which is uniformly bounded in $\varepsilon$. Moreover, in the nondegenerate case, at least $P_1(\Omega)$ solutions of Problem $(\mathrm P_{V,\varepsilon})$ have energy $\frac{E_\varepsilon}\varepsilon$ and Morse index which is uniformly bounded in $\varepsilon$.
\end{proposition}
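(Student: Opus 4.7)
The plan is to obtain the energy and Morse index bounds as byproducts of the same min--max/Morse-theoretic machinery that is used to establish Theorem~\ref{Thm:Main1}. Recall that the number of solutions is estimated by producing a pair of continuous maps (a ``barycenter'' $\beta\colon \mathcal N_\varepsilon\to\Omega_d$ and a ``concentration'' $\Phi_\varepsilon\colon \Omega\to \mathcal N_\varepsilon$, for a suitable energy sublevel $\mathcal N_\varepsilon$ of the constrained functional) whose composition $\beta\circ\Phi_\varepsilon$ is homotopic to the inclusion $\Omega\hookrightarrow \Omega_d$. By the standard Lusternik--Schnirelmann inequality this forces $\mathrm{cat}_{\mathcal N_\varepsilon}(\Phi_\varepsilon(\Omega))\ge \mathrm{cat}(\Omega)$, and under nondegeneracy Morse inequalities upgrade the count to $P_1(\Omega)$ via the Poincar\'e polynomial.

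First I would inspect the construction of $\Phi_\varepsilon$. The test functions $\Phi_\varepsilon(x)$ are built by suitably translating and cutting off the compactly supported radial profile $U_\gamma$ from Section~\ref{sec:auxpb} (rescaled by $\varepsilon$) about the point $x\in\Omega$. A direct change of variables $y=(z-x)/\varepsilon$ on the energy $E_\varepsilon$ of such a bump shows that
\begin{equation*}
E_\varepsilon(\Phi_\varepsilon(x))=\varepsilon\,\mathfrak{c}(V)+o(\varepsilon),\qquad \varepsilon\to0,
\end{equation*}
uniformly in $x\in\Omega$, where $\mathfrak{c}(V)>0$ is the (rescaled) minimum value of the auxiliary problem. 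Consequently the min--max levels $c_1^\varepsilon\le\cdots\le c_k^\varepsilon$ produced by Lusternik--Schnirelmann on $\mathcal N_\varepsilon$ satisfy $c_k^\varepsilon\le\sup_{x\in\Omega}E_\varepsilon(\Phi_\varepsilon(x))\le C\,\varepsilon$. Since \eqref{eq:Eeboundedbelow} gives $E_\varepsilon\ge -m|\Omega|$, dividing by $\varepsilon$ shows that the $\mathrm{cat}(\Omega)$ critical values accumulate in a compact subinterval of $\mathds{R}_{>0}$, proving the first claim.

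For the Morse-index bound in the nondegenerate case, the natural argument is to invoke the Morse relations in the form used to get the $P_1(\Omega)$ solutions. Since $\beta\circ\Phi_\varepsilon\simeq \mathrm{id}_\Omega$, the map $\Phi_{\varepsilon*}\colon H_*(\Omega)\to H_*(\mathcal N_\varepsilon)$ is injective in every degree, so $\mathcal N_\varepsilon$ has at least $P_1(\Omega)$ nontrivial Betti numbers in degrees $0,\dots,\dim\Omega$. By the standard Morse inequality applied to the restriction of $E_\varepsilon$ to the constraint manifold, each nonzero Betti number in degree $q$ of the pair $(\mathcal N_\varepsilon,\emptyset)$ is detected by at least one critical point of Morse index equal to $q$; in particular the $P_1(\Omega)$ solutions so produced have Morse indices bounded by $\dim\Omega=N$, independently of $\varepsilon$. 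Their critical values lie below $\sup_{x\in\Omega}E_\varepsilon(\Phi_\varepsilon(x))\le C\varepsilon$, so $E_\varepsilon/\varepsilon$ is uniformly bounded.

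The main obstacle I expect is the uniform-in-$\varepsilon$ upper estimate $E_\varepsilon(\Phi_\varepsilon(x))\le C\varepsilon$: one must check that the cut-off needed to force $\Phi_\varepsilon(x)\in H^1_0(\Omega)$ and to respect the volume constraint $\int u=V$ produces only $o(\varepsilon)$ corrections to the energy, and that this can be done uniformly as $x$ ranges in $\Omega$ (in particular near $\partial\Omega$, where a slight modification of the support/translation procedure is required). Once this uniform estimate and the injectivity of $\Phi_{\varepsilon*}$ in homology are in place, the Morse-theoretic count and the uniform bound on $E_\varepsilon/\varepsilon$ and the Morse index follow from standard deformation arguments on $\mathcal N_\varepsilon$.
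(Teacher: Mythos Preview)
Your overall strategy coincides with the paper's: the low-energy solutions are found in the sublevel $E_\varepsilon^c\cap\mathfrak M^V$ with $c=m(\varepsilon,\varepsilon R_{\gamma},V)=\varepsilon^N E(U_\gamma)$, $\gamma=V/\varepsilon^N$, and one bounds this level uniformly in $\varepsilon$; the Morse-index bound follows exactly as you say, via the injection of $H^*(\Omega)$ into the cohomology of the sublevel (this is Remark~\ref{rem:morseindexlowenergy} in the paper).

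However, your energy computation is off in two respects. First, the scaling: a direct change of variables gives $E_\varepsilon(\Phi_{\varepsilon,V}^{x_0})=\varepsilon^N E(U_{V/\varepsilon^N})$, not $\varepsilon\,\mathfrak c(V)+o(\varepsilon)$. Since $E(U_\gamma)$ is \emph{negative} and behaves like $-C\gamma$ for large $\gamma$ (see \eqref{eq:stimaEUgamma} and the surrounding discussion), the level $c$ is a bounded \emph{negative} constant, not a positive quantity of order $\varepsilon$. The paper's argument is simply that $c=m(\varepsilon,\varepsilon R_\gamma,V)$ stays bounded because, by Theorem~\ref{Thm:AsymptoticEquivalentOfTheRadius}, $\varepsilon R_\gamma\le C^+V^{1/N}$ is bounded independently of $\varepsilon$.

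Second, your ``main obstacle'' is a non-issue. No cut-off is needed: the entire point of the analysis in Section~\ref{sec:auxpb} is that $U_\gamma$ has \emph{compact support} in the ball of radius $R_\gamma$, and the choice of $V$ small (see \eqref{Eq:b}) guarantees $\varepsilon R_\gamma<r$. The map $\Phi_{\varepsilon,V}$ is defined only on $\Omega_r^-$ (precomposed with the homotopy equivalence $f_0\colon\Omega\to\Omega_r^-$), so the translated bump sits entirely inside $\Omega$ with the exact volume $V$ and the exact energy $c$; there are no boundary corrections and no $o(\varepsilon)$ error terms. Once you use the correct level $c$ and drop the cut-off worry, your argument matches the paper's.
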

A proof of Proposition~\ref{thm:boundsenergyindex} will be given at the end of Section~\ref{sec:proofaddhp}.

\section{Notation and preliminary facts}
In this section we present some known results related to the Lusternik--Schnirel\-mann theory and Morse theory which will be used in the sequel.
\begin{definition}\label{Def:Lusternik-SchnirelmannCategory} Let $(X,\tau)$ be a topological space and $Y\subseteq X$ be a closed subset. 
The \emph{Lusternik-Schnirelmann category of $Y$ in $X$} is the number $\cat_{X}(Y)\in\mathds N\bigcup\{+\infty\}$ defined as the minimum number $k$ such that there exist $\mathcal{U}_1,...,\mathcal{U}_k$ open subsets of $X$ contractible in $X$ such that $Y\subseteq\bigcup_i\mathcal{U}_i$. Furthermore, we set $\cat(X):=\cat_X(X)$.
\end{definition}

Let us also recall the following
\begin{definition} Let $\mathfrak{M}$ be a $C^1$-Hilbert manifold, $J\colon\mathfrak{M}\to\mathds{R}$ a $C^1$ functional, and $(u_n)$ a sequence in $\mathfrak{M}$. We say that $u_n$ is a {\emph{Palais--Smale sequence}} $($or a {\emph{PS-sequence}}, for short$)$ for $J$  if 
\begin{equation}\label{Eq:DefPS}
\lim_{n\to\infty}J(u_n)=c\in\mathds R,
\end{equation}
and
\begin{equation}\label{Eq:DefPS0}
\lim_{n\to\infty}\big\Vert J'(u_n)\big\Vert_{T_{u_n}^*\mathfrak{M}}= 0,
\end{equation} 
where $T_{u_n}^*\mathfrak{M}$ denotes the (topological) dual of the tangent space $T_{u_n}\mathfrak{M}$.
\end{definition}
\begin{definition} Let $\mathfrak{M}$ be a $C^2$-Hilbert manifold, $J\colon\mathfrak{M}\to\mathds{R}$ a $C^1$ functional. We say that {\emph{$J$ satisfies the Palais-Smale condition}}, if every Palais-Smale sequence has a convergent subsequence in the \emph{strong} topology of $\mathfrak{M}$. 
\end{definition}
\subsection{Abstract Lusternik--Schnirelman and Morse theory}
To prove our main results we need the following theorem.
\begin{theorem}\label{Thm:FotoLS} Let $\mathfrak{M}$ be a $C^2$-Hilbert manifold and let $J\colon\mathfrak{M}\to\mathds{R}$ be a $C^1$ functional. Assume that 
\begin{enumerate}[$(i)$]
 \item\label{Assumption:Thm:FotoLS} $\inf\limits_{u\in\mathfrak{M}} J(u)>-\infty$;\smallskip
 
 \item\label{Assumption:Thm:FotoLS0}  $J$ satisfies the Palais--Smale condition;\smallskip
 
 \item\label{Assumption:Thm:FotoLS1}there exists a topological space $X$ and two continuous maps $f\colon X\to J^c$, $g\colon J^c\to X$ such that $g\circ f$ is homotopic to the identity map of $X$.
 \end{enumerate} 
 Then there are at least $\cat(X)$ critical points of $J$ in $J^c$. Furthermore, if $\mathfrak{M}$ is contractible and $\cat(X)>1$, or more generally if $\cat(X)>\cat(\mathfrak M)$, there is at least one additional critical point $u\notin J^c$. 
\end{theorem}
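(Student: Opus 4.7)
The proof would proceed by the classical Lusternik--Schnirelmann minimax scheme adapted to the sublevel set $J^c$. The first step I would take is to establish the topological bound $\cat_{\mathfrak M}(J^c)\ge\cat(X)$. Given an open cover $\mathcal U_1,\dots,\mathcal U_k$ of $J^c$ by sets contractible in $\mathfrak M$, pulling back via $f$ produces an open cover $V_i=f^{-1}(\mathcal U_i)$ of $X$; each restriction $f|_{V_i}$ is null-homotopic in $\mathfrak M$ through the contraction of $\mathcal U_i$, and the homotopy $g\circ f\simeq\mathrm{id}_X$ transforms this into a null-homotopy of the inclusion $V_i\hookrightarrow X$, so that $V_i$ is contractible in $X$ and hence $\cat(X)\le k$.

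With this in hand, the standard minimax levels
$$c_k=\inf_{A\in\Gamma_k}\sup_{u\in A}J(u),\qquad \Gamma_k=\bigl\{A\subseteq\mathfrak M\text{ closed}:\ \cat_{\mathfrak M}(A)\ge k\bigr\},$$
are finite for every $k$ by hypothesis $(i)$, and satisfy $c_1\le\cdots\le c_{\cat(X)}\le c$ because $J^c\in\Gamma_k$ for each $k\le\cat(X)$ by the topological lemma. Invoking the Palais--Smale condition $(ii)$, the first deformation lemma on the $C^2$-Hilbert manifold $\mathfrak M$ (obtained by flowing along a locally Lipschitz pseudo-gradient of $J$ on $\mathfrak M\setminus\mathrm{Crit}(J)$) shows that each $c_k$ is a critical value of $J$; moreover, whenever $c_k=c_{k+1}=\cdots=c_{k+r}$, a standard argument on the category of the critical set at that level gives that this set has category at least $r+1$ and is therefore infinite. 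In all cases the total count of critical points contained in $J^c$ is at least $\cat(X)$.

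For the supplementary critical point, I would assume $\cat(X)>\cat(\mathfrak M)$ (of which the contractible case $\cat(\mathfrak M)=1$, $\cat(X)>1$ is a particular instance) and argue by contradiction: if every critical point of $J$ sat in $J^c$, then boundedness from below plus PS would allow one to use the negative pseudo-gradient flow to deformation-retract $\mathfrak M$ onto an arbitrarily small neighborhood of $J^c$, yielding $\cat(\mathfrak M)\ge\cat_{\mathfrak M}(J^c)\ge\cat(X)$ and contradicting the standing hypothesis. Consequently at least one critical point must lie outside $J^c$.

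The main obstacle, in my view, is the topological key lemma: the contracting homotopies of the $\mathcal U_i$ in $\mathfrak M$ need not stay inside $J^c$, while $g$ is defined only on $J^c$, so the transfer of null-homotopies through $g$ must exploit the full triangle $f,g,g\circ f\simeq\mathrm{id}_X$ rather than any one map in isolation. Once this is granted, the minimax construction and deformation arguments are entirely standard.
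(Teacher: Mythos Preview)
Your outline matches the strategy the paper cites, but the central topological inequality is stated incorrectly and, as written, is false. You claim $\cat_{\mathfrak M}(J^c)\ge\cat(X)$, yet when $\mathfrak M$ is contractible---a case the theorem explicitly covers---one has $\cat_{\mathfrak M}(A)=1$ for every nonempty $A\subseteq\mathfrak M$, so your inequality would read $1\ge\cat(X)$. The obstacle you flag at the end is not a technicality to be ``granted'': it is exactly why the argument cannot close. The contractions of the $\mathcal U_i$ run through $\mathfrak M$, where $g$ is undefined, and no amount of juggling the triangle $f,g,g\circ f$ will produce a null-homotopy in $X$ from a null-homotopy in $\mathfrak M$. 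The same error reappears in your extra-critical-point argument, where the chain $\cat(\mathfrak M)\ge\cat_{\mathfrak M}(J^c)\ge\cat(X)$ again relies on the false second inequality.

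The inequality that actually holds---and the one the paper writes---is $\cat(J^c)\ge\cat(X)$, with the \emph{intrinsic} category $\cat(J^c)=\cat_{J^c}(J^c)$. Your own argument proves it verbatim once the $\mathcal U_i$ are taken contractible in $J^c$ rather than in $\mathfrak M$: then the contractions stay inside the domain of $g$ and push forward to null-homotopies in $X$. The minimax must be adjusted accordingly: set $\Gamma_k=\{A\subseteq J^c\text{ closed}:\cat_{J^c}(A)\ge k\}$ and use that the negative pseudo-gradient flow decreases $J$, hence preserves $J^c$; subadditivity and deformation monotonicity of $\cat_{J^c}$ then run the usual Lusternik--Schnirelmann count inside $J^c$. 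For the additional critical point, the contradiction is $\cat(\mathfrak M)=\cat(J^c)\ge\cat(X)$ (intrinsic categories, via the homotopy equivalence coming from the flow), not the relative version you wrote.
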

\begin{proof} Under assumption \eqref{Assumption:Thm:FotoLS1}, $\cat(X)\le\cat(J^c)$. The result follows by applying standard variational techniques, see \cite{BenciCeramiCalcVar} or \cite{BenciCeramiPassaseo} for details.
\end{proof}
The above result can be improved in the nondegenerate case  using Morse theory.\smallskip

Let $X$ be a topological space and denote by $H^n(X)$ its $n$-th Alexander-Spanier cohomology group with coefficients in $\mathds R$; let $\beta_n(X)$ denote the $n$-th Betti number of $X$, i.e., the dimension of $H^n(X)$. For an account in book form of the Alexander-Spanier cohomology we refer the interested reader to the classical text \cite{MasseyBook}.
\begin{definition}[Poincare's Polynomial]\label{Def:PoincarePolynomial}  The {\emph{Poincare's Polynomial $P_t(X)$ of $X$}} is defined as the formal power series in the variable $t$: 
\begin{equation}\label{Eq:DefPP}
P_t(X):=\sum_{n=0}^{+\infty}\beta_n\,t^n.
\end{equation}
\end{definition}
\begin{remark} If $X$ is a compact manifold, we have that  $H^n(X)$ is a finite dimensional vector space and the formal series \eqref{Eq:DefPP} is actually a polynomial.
\end{remark}
In the following definition we give the notion of Morse index of a critical point, which is necessary in our treatment to establish a relation between the Poincare's polynomial $P_{t}(\Omega )$ and the number of solutions of the Euler--Lagrange equation associated to a given functional $J$. For our purposes, it is necessary to employ an extension of Morse theory to functionals that are not necessarily of class $C^2$, which uses generalized notions of nondegeneracy and Morse index. We will follow here the approach to Morse theory developed in \cite{BenciNewApproach} which is suitable in problems arising from PDE's.  \smallskip

Given a pair $Y\subset X$ of topological spaces and $k\ge0$, let $H^k(X,Y)$ denote the $k$-th relative Alexander--Spanier cohomology group of the pair, and denote by $\beta_k(X,Y)$ its dimension.
\begin{definition}[Morse Index] Let $\mathfrak{M}$ be a $C^2$-Hilbert manifold, $J\colon\mathfrak{M}\to\mathds{R}$ a $C^1$ functional
and let $u\in\mathfrak{M}$ be an isolated critical point of $J$ at level\footnote{This means that $J(u)=c$, $J'[u]=0$, and there exists a neighbourhood $\mathcal{U}$ of $u$ in $\mathfrak{M}$ such that $u$ is the only critical critical point of $J$ in $\mathcal{U}$.} $c\in\mathds R$. We denote by $i_t(u)$ the following formal power series in $t$ 
\begin{equation}
i_t(u):=\sum_{k=0}^{+\infty}\beta_k\big(J^c\cap\mathcal{U}, (J^c\setminus\{u\})\cap\mathcal{U}\big)\,t^k,
\end{equation}
where $J^c=\big\{v\in\mathfrak{M}:\:J(v)\le c\big\}$, and $\mathcal{U}$ is a neighborhood of $u$ containing only $u$ as a critical point. We call $i_t(u)$ the {\emph{polynomial Morse index of $u$}}. The number $i_1(u)$ is called the {\emph{multiplicity of $u$}}.  
\end{definition}
If $J$ is of class $C^2$ in a neighborhood of $u$ and $J''[u]$ is not degenerate, we say that $u$ is a nondegenerate critical point. In this case we have that 
\begin{equation}\label{Eq:DefNondegenerateC^2}
i_t(u)=t^{\mu(u)},
\end{equation} 
where $\mu(u)$ is the Morse index of $u$, i.e., the dimension of a maximal subspace on which the bilinear form $J''[u](\cdot, \cdot)$ is negative-definite. This suggests the following definition.
\begin{definition}\label{Eq:DefTopologicallyNondegenerate} Let $\mathfrak{M}$ be a $C^2$-Hilbert manifold, $J\colon\mathfrak{M}\to\mathds{R}$ be a $C^1$ functional and let $u\in\mathfrak{M}$ be an isolated critical point of $J$ at level $c$. We say that $u$ is \emph{(topologically) nondegenerate}, if $i_t(u)=t^{\mu(u)}$, for some  $\mu(u)\in\mathds{N}$.
\end{definition}

\begin{theorem}\label{Thm:FotoMorse} Let the assumptions \eqref{Assumption:Thm:FotoLS}, \eqref{Assumption:Thm:FotoLS0},  and \eqref{Assumption:Thm:FotoLS1} of Theorem \ref{Thm:FotoLS} hold, and assume additionally that all the critical points of $J^c$ are isolated. Then the following identity of formal power series holds:
\begin{equation}\label{Eq:MorseRelation}
\sum_{u\in Crit(J)}i_t(u)=P_t(X)+t\big[P_t(X)-1\big]+(1+t)Q(t),
\end{equation}
where $Q(t)$ is a polynomial with nonnegative integer coefficients, and $\mathrm{Crit}(J)$ denotes the set of critical points of $J$ on $J^c$. Moreover, if all the critical points are nondegenerate, there are at least $P_1(X)$ critical points with energy less than or equal to $c$, and at least $P_1(X)-1$ critical points with energy greater than $c$.
\end{theorem}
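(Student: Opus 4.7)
The plan is to combine two instances of the generalized Morse identities of \cite{BenciNewApproach} (needed since $J$ is only $C^1$)---one for the sublevel set $J^c$, one for the pair $(\mathfrak{M},J^c)$---and then to replace $P_t(J^c)$ on the right-hand side by $P_t(X)$, using the homological splitting furnished by $f$ and $g$. Throughout I will use that the ambient manifold $\mathfrak{M}$ is contractible (the case relevant in this paper, since $\mathfrak{M}$ will be an affine constraint in $H^1_0(\Omega)$); it is precisely this that produces the explicit shift $t\bigl[P_t(X)-1\bigr]$ on the right-hand side.

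The steps in order are the following. \emph{(1)} The Palais--Smale condition and boundedness from below, via the deformation lemma in the framework of \cite{BenciNewApproach}, yield
\[
\sum_{u\in\mathrm{Crit}(J)\cap J^c}\! i_t(u)=P_t(J^c)+(1+t)\,Q_1(t),\qquad Q_1\in\mathds N[t].
\]
\emph{(2)} The same Morse machinery, applied relatively to the pair $(\mathfrak{M},J^c)$, yields the analogous identity for critical points at higher energy,
\[
\sum_{u\in\mathrm{Crit}(J),\,J(u)>c}\! i_t(u)=P_t(\mathfrak{M},J^c)+(1+t)\,Q_2(t),\qquad Q_2\in\mathds N[t].
\]
\emph{(3)} Since $g\circ f\simeq\mathrm{id}_X$, the pullback $g^{*}\colon H^{*}(X)\to H^{*}(J^c)$ is injective with left inverse $f^{*}$; the splitting $H^{*}(J^c)\cong g^{*}H^{*}(X)\oplus\ker f^{*}$ gives
\[
P_t(J^c)=P_t(X)+K(t),\qquad K\in\mathds N[t].
\]
\emph{(4)} Contractibility of $\mathfrak{M}$ (together with connectedness of $J^c$, which may be assumed) collapses the long exact sequence of the pair to $H^k(\mathfrak{M},J^c)\cong H^{k-1}(J^c)$ for $k\ge 2$, with vanishing in lower degrees, whence
\[
P_t(\mathfrak{M},J^c)=t\bigl[P_t(J^c)-1\bigr]=t\bigl[P_t(X)-1\bigr]+tK(t).
\]
\emph{(5)} Summing the identities of (1) and (2) and substituting from (3)--(4) gives
\[
\sum_{u\in\mathrm{Crit}(J)}\! i_t(u)=P_t(X)+t\bigl[P_t(X)-1\bigr]+(1+t)\bigl(K(t)+Q_1(t)+Q_2(t)\bigr),
\]
which is the claimed identity with $Q:=K+Q_1+Q_2$. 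For the nondegenerate count, $i_t(u)=t^{\mu(u)}$ is a monomial, so each critical point contributes $1$ at $t=1$; specializing the identities of (1) and (2) separately at $t=1$ then yields at least $P_1(X)$ critical points with $J\le c$ and at least $P_1(X)-1$ with $J>c$.

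The step I expect to be the main obstacle is (4): the precise shape $t\bigl[P_t(X)-1\bigr]$ of the higher-energy contribution rests on the contractibility of $\mathfrak{M}$, which is not an explicit hypothesis of Theorem~\ref{Thm:FotoMorse} but is forced by the variational setting of the paper; without it one obtains only a weaker identity with $P_t(\mathfrak{M})$ in place of $1$. A secondary technical difficulty is that the generalized Morse theory of \cite{BenciNewApproach} must be run for a merely $C^1$ functional with the polynomial index $i_t$ as defined above: one has to justify the deformation lemma and the relative handle decomposition at this regularity, and, for (2), to verify that the critical points above level $c$ are isolated and that the Palais--Smale condition persists there, so that the relative Morse identity is actually applicable.
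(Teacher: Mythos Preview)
The paper gives no self-contained proof of this theorem; it simply refers the reader to \cite{BenciCeramiCalcVar} and \cite{BenciCeramiPassaseo}. Your outline is exactly the standard argument found in those references (the ``photography'' method combined with the generalized Morse relations of \cite{BenciNewApproach}), and the caveats you raise---contractibility of $\mathfrak{M}$ and isolation/Palais--Smale above level $c$---are precisely the implicit hypotheses that the paper's variational setting (an affine hyperplane of $H^1_0(\Omega)$) is meant to supply.
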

\begin{proof} See \cite{BenciCeramiCalcVar} or \cite{BenciCeramiPassaseo} for details.
\end{proof}
\begin{remark}\label{rem:morseindexlowenergy}
In Theorem~\ref{Thm:FotoMorse}, each one of the $P_1(X)$ critical points with energy less than or equal to $c$ has Morse index that varies in the range $\{0,\ldots,k_*\}$, with $k_*=\max\big\{k\in\mathds N:\beta_k(X)\ne0\big\}$. In particular, when $X$ is a smooth manifold, then the Morse index of these critical points is less than or equal to $\dim(X)$.
\end{remark}
\subsection{Notations}
We will use the following notations throughout the paper:
\begin{itemize}
\item Given $N\ge1$ and a Borel subset $B\subset\mathds R^N$, we will denote by $\big\vert B\big\vert$ the Lebesgue $N$-measure of $B$, and by $\chi_B$ the characteristic function of $B$;\smallskip

\item $\omega_N$ is the volume of the unit ball of $\mathds R^N$, and $\alpha_N$ is the $N$-area of the unit sphere in $\mathds R^{N+1}$;
\smallskip

\item given a functional $\mathcal F$ on a set $\mathcal S$, we will denote by $\argmin\big\{\mathcal F(x):x\in\mathcal S\big\}$ the (possibly empty) set of minimizers of $\mathcal F$ in $\mathcal S$; 
\smallskip

\item given subset $A\subset B\subset\mathds R^N$, we write $A\Subset B$ to mean that the closure of $A$ is compact and it is contained in $B$;
\smallskip

\item for a function $u\colon X\to\mathds R$, we denote by $u^+$ (resp., $u^-$) the \emph{positive $($resp., negative$)$ part} of $u$, defined by $u^+(x)=\max\{u(x),0\}=\frac12\big(u(x)+\vert u(x)\vert\big)$ (resp., $u^-(x)=\max\{-u(x),0\}=\frac12\big(-u(x)+\vert u(x)\vert\big)$);
\smallskip

\item for an integer $k\ge0$ and $\alpha\in\left]0,1\right]$, the H\"older spaces $C^{k,\alpha}(\Omega)$ and $C^{k,\alpha}_\textrm{loc}(\Omega)$, and their Banach space norms, are defined as in \cite[\S~4.1]{GilbargTrudinger}.
\smallskip

\item Given any set $\mathcal X$ and real valued functions $f_1,f_2\colon\mathcal X\to\mathds R$, we define $f_1\vee f_2, f_1\wedge f_2\colon\mathcal X\to\mathds R$ by setting
$f_1\vee f_2(x)=\max\big\{f_1(x),f_2(x)\big\}$ and $f_1\wedge f_2(x)=\min\big\{f_1(x),f_2(x)\big\}$.
\end{itemize}
\section{The auxiliary problem}\label{sec:auxpb}
In order to prove Theorem \ref{Thm:Main1}, we will exploit the properties of a certain solution $U_\gamma$ of an auxiliary variational problem in $\mathds R^N$. Such $U_\gamma$ is a radial function with compact support in $\mathds R^n$, that will be used to define a homotopy inverse for the barycenter map, see Section~\ref{sec:proofaddhp}. 
In order to study the properties of $U_\gamma$, we will employ some results from the classical theory of variational inequalities, for which a standard reference is \cite{KinderlehrerStampacchia}.
\subsection{The auxiliary problem ($\mathrm P_\gamma$)}\label{sub:auxpb}
We consider the following minimization problem ($\mathrm P_\gamma$):  for fixed $\gamma\in\left]0,+\infty\right[,$ find $u\in H^{1}(\mathds{R}^N)$ minimizing 
\[E(u)=\int_{\mathds{R}^N}\left[\tfrac{1}{2}|\nabla u|^2+W(u)\right]\,\mathrm dx,\] over the convex set 
\begin{equation}\label{eq:defKgamma}
K_\gamma=\left\{u\in H^{1}(\mathds{R}^N): u\ge0,\ \int_{\mathds R^N} u\,\mathrm dx\le \gamma\right\},
\end{equation}
where the potential $W\colon\mathds R\to\mathds R$ is defined in the introduction, i.e., it is a map of class $C^2$ satisfying \eqref{Eq:Potential}, \eqref{Eq:Potential2} and \eqref{Eq:Potential0}. Observe that, by Fatou's Lemma, the set $K_\gamma$ is weakly closed\footnote{%
This is the reason for using the constraint $\int_{\mathds R^n}u\,\mathrm dx\le\gamma$, rather than $\int_{\mathds R^N}u\,\mathrm dx=\gamma$. In fact, we will later show that the two constraints define the same minimization problem when $\gamma$ is sufficiently large (see Theorem~\ref{Thm:AsymptoticProblemStatement} , formula \eqref{Eq:AsymptoticProblemStatement0+})}
in $H^1(\mathds R^N)$.  The problem ($\mathrm P_\gamma$) is translation invariant so, if a minimum exists, all its translates are minima too.\smallskip

As it is well known, if a minimum $U_\gamma$ for problem ($\mathrm P_\gamma$) exists, then there exists $\lambda_\gamma=\lambda(U_\gamma) \in \mathds{R}$ such that $U_\gamma$ satisfies the associated variational inequality 
\begin{equation}\label{Eq:AsymptoticEulerLagrangeInequality}
\int\left[\big\langle\nabla U_\gamma,\nabla(v-U_\gamma)\big\rangle+W'(U_\gamma)(v-U_\gamma)\right]\,\mathrm dx\ge\lambda_\gamma\int(v-U_\gamma)\,\mathrm dx,  
\end{equation}
for every $v\in K_\gamma$ (see \cite[Proposition $5.1$, p.\ 15]{KinderlehrerStampacchia}). On the other hand, by \cite[Theorem $2.1$, p.\ $24$, Chapter II]{KinderlehrerStampacchia} applied to the variational inequality \eqref{Eq:AsymptoticEulerLagrangeInequality}, for each fixed $\lambda_\gamma$ and $U_\gamma$ the following variational inequality 
\begin{equation*}\label{Eq:LinearizedVariationalInequality}\int\left[\big\langle\nabla u,\nabla(v-u)\big\rangle+W'(U_\gamma)(v-u)\right]\,\mathrm dx\ge\lambda_\gamma\int(v-U_\gamma)\,\mathrm dx,  \quad\forall\,v\in K_\gamma,
\end{equation*}
admits at most one solution $u$.

\subsection{Analysis of the variational inequality}\label{sub:varineq}
\begin{definition}\emph{(see \cite[Definition 5.1, p.~35, Ch.~II]{KinderlehrerStampacchia})} Let $\Omega\subset\mathds{R}^N$ be an
open subset, $u\in {W^{1,2}(\Omega)}$ and $E\subseteq\overline{\Omega}$. The function $u$ {\emph{is nonnegative on $E$ in the sense of $W^{1,2}(\Omega)$}} if there exists a sequence $u_n\in W^{1,\infty}(\Omega)$ such that 
\begin{equation*}
u_n(x)\ge0,\:\forall x\in E,\quad\text{and}\quad u_n\to u \text{ in } W^{1,2}(\Omega). 
\end{equation*}
We say that {\emph{$u\ge v$ on $E$ in the sense of $W^{1,2}(\Omega)$}}, if $u-v\ge0$ on $E$ in the sense of $W^{1,2}(\Omega)$.
\end{definition}

\begin{definition}\emph{(See 
\cite[Definition 6.7, Ch.~II, p.~45]{KinderlehrerStampacchia})}
\label{Def:PositivityInTheSenseOfStampacchia} Let $\Omega\subset\mathds R^N$ be an open set, $x_0\in\Omega$, and $u\in W^{1,2}(\Omega)$. We say that {\emph{$u(x_0)>0$ in the sense of $W^{1,2}(\Omega)$}}, if there exists an open ball $B_\rho(x_0)$ with $\rho>0$ and $\varphi\in W_0^{1,\infty}\big(B_\rho(x_0)\big)$, $\varphi\ge 0$ and $\varphi(x_0)>0$, such that $u-\varphi\ge0$ on $B_\rho(x_0)$ in the sense of $W^{1,2}(\Omega)$.  For any $\psi\in W^{1,2}(\Omega)$ we say that {\emph{$u(x_0)>\psi(x_0)$ in the sense of $W^{1,2}(\Omega)$}}, if $u(x_0)-\psi(x_0)>0$ in the sense of $W^{1,2}(\Omega)$.   
\end{definition} 
It is easy to see that, for all $u\in W^{1,2}(\Omega)$, the set: \[\Big\{x\in\Omega:u(x)>0\text{ in the sense of } W^{1,2}(\Omega)\Big\}\] is open.\smallskip

Carrying on our analysis of the variational inequalitiy \eqref{Eq:AsymptoticEulerLagrangeInequality}, it is not too hard to prove that\footnote{see \cite[page 43]{KinderlehrerStampacchia} and use a partition of unity argument}, given a minimizer $U_\gamma$ for Problem ($\mathrm P_\gamma$),  on the open subset $\Gamma\subset\mathds{R}^N$:
\begin{equation}\label{eq:defGamma}
\Gamma=\Gamma(U_\gamma)=\big\{x\in\mathds{R}^N:U_\gamma(x)>0\text{ in the sense of } W^{1,2}(\mathds{R}^N)\big\},
\end{equation}
we have that
\begin{equation}\label{Eq:WeakFormulationOnThePositivitySet}
\phantom{\varphi\in C^{\infty}_0(\Gamma)}
\int_\Gamma\left[\big\langle\nabla U_\gamma,\nabla\varphi\big\rangle+W'(U_\gamma)\varphi\right]\,\mathrm dx=\lambda_\gamma\int_\Gamma\varphi\,\mathrm dx,\quad \forall\varphi\in C^{\infty}_0(\Gamma).
\end{equation}
We also need the following notation 
\[H^1_\text{rad}(\mathds{R}^N)=\big\{u\in H^1(\mathds{R}^N): u \text{ is radially symmetric}\big\}.
\]
\begin{definition}\label{Def:SymmetricDecreasingRearrangement} Let $A$ be a measurable set of finite volume in $\mathds{R}^n$. Its {\emph{symmetric rearrangement}} $A^*$ is the open ball centered at the origin whose volume agrees with the volume of $A$. Let $f\colon\mathds R^N\to\mathds R$ be a nonnegative measurable function that {\emph{vanishes at infinity}}, in the sense that all its superlevel sets have finite measure, i.e., $\big|\{x:f(x)>t\}\big|<+\infty$, for all  $t>0$. The {\emph{symmetric decreasing rearrangement of $f$}} is the radially symmetric function $f^*$ whose superlevel sets are the symmetric rearrangements of the superlevel sets of $f$. Thus: 
$f^*(x)=\int_0^{+\infty}\chi_{\{y:f(x)>y\}^*}(t)\,\mathrm dt$. 
\end{definition}
The symmetric decreasing rearrangement $f^*$ of a measurable function $f$ is lower semicontinuous (since its level sets are open), and it is uniquely determined by the distribution function $\mu_f(t):= |\{x : f(x) > t\}|$. By construction, $f^*$ is \emph{equimeasurable} with $f,$ i.e., corresponding superlevel sets of $f$ and of $f^*$ have the same volume, $\mu_f(t) =\mu_{f^*}(t)$ for all $t>0$.
\begin{lemma}\label{Lemma:EnergyDecreasingUnderScwartzSymmetrization} $E(u^*)\le E(u)$, for every $u\in W^{1,2}(\mathds{R}^N)$, $u\ge0$.
\end{lemma}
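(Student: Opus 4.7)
The plan is to split the energy $E(u)=\tfrac{1}{2}\int|\nabla u|^2\,\mathrm dx+\int W(u)\,\mathrm dx$ into its kinetic and potential parts and to show, separately, that rearrangement decreases the first and leaves the second unchanged. I may assume at the outset that $E(u)<+\infty$, since otherwise the claim is vacuous. The hypothesis $u\ge0$ with $u\in W^{1,2}(\mathds R^N)$ ensures that $u$ vanishes at infinity in the sense of Definition~\ref{Def:SymmetricDecreasingRearrangement}: Chebyshev's inequality gives $\big|\{u>t\}\big|\le t^{-2}\|u\|_{L^2}^2<+\infty$ for each $t>0$, so the symmetric decreasing rearrangement $u^*$ is well defined and also lies in $W^{1,2}(\mathds R^N)$.

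For the potential term, the key point is that $u$ and $u^*$ share the same distribution function $\mu_u=\mu_{u^*}$, hence are equimeasurable by construction. A standard layer-cake computation then yields, for any Borel function $g\colon[0,+\infty)\to\mathds R$ for which one of the two integrals is defined, the identity $\int_{\mathds R^N} g(u)\,\mathrm dx=\int_{\mathds R^N} g(u^*)\,\mathrm dx$; applying this to $g(t)=W(t)$ for $t\ge 0$ gives $\int W(u^*)\,\mathrm dx=\int W(u)\,\mathrm dx$. Note that, combined with $E(u)<+\infty$ and $W\ge-m$, this also implies that $W(u^*)$ is integrable, so the right-hand side of the target inequality is meaningful.

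For the kinetic term, I invoke the classical P\'olya--Szeg\H o inequality: for every nonnegative $f\in W^{1,2}(\mathds R^N)$ that vanishes at infinity, one has $\int|\nabla f^*|^2\,\mathrm dx\le\int|\nabla f|^2\,\mathrm dx$. Specializing to $f=u$ and summing with the equality of potential parts immediately yields $E(u^*)\le E(u)$. The real content of the lemma is concentrated in the P\'olya--Szeg\H o inequality, whose proof rests on the co-area formula and the Euclidean isoperimetric inequality; I would simply quote it from a standard reference (e.g.\ Lieb--Loss, \emph{Analysis}, or Kawohl's monograph on rearrangements). Once this tool is in hand there is no further obstacle.
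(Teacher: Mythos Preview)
Your proof is correct and follows essentially the same approach as the paper: apply the P\'olya--Szeg\H o inequality to the gradient term and use equimeasurability via the layer-cake formula to show the potential term is unchanged. You add some helpful remarks on well-definedness (vanishing at infinity via Chebyshev, integrability of $W(u^*)$), but the core argument is identical.
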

\begin{proof} By the Polya-Szego inequality we have that $\int|\nabla u^*|^2\,\mathrm dx\le\int|\nabla u|^2\,\mathrm dx$. Using the Layer-Cake integral representation of a nonnegative function we have that $\int W(u^*)\,\mathrm dx=\int W(u)\,\mathrm dx$ (compare with \cite[Proposition ~2.6]{VanSchaftingen}). The conclusion follows easily.
\end{proof}
Let us recall the following result from \cite{BrothersZiemer}.
\begin{theorem}\label{Thm:BrothersZiemer} Let $u\colon\mathds{R}^N\to\left[0,+\infty\right[$ be a function with $\supp(u)\Subset\mathds{R}^N$, and let $A\colon\left[0,+\infty\right[\to\left[0,+\infty\right[$ be a function  of class $C^2$, such that $A(0)=0$, and with $A^{\frac1p}$ convex for some $1\le p<+\infty$. Assume $\int_{\mathds{R}^N}A(|\nabla u|)\,\mathrm dx<+\infty$. Then $\nabla u^*$ is a measurable function and 
\begin{equation}\label{Eq:BrothersZiemerStatement}
\int_{\mathds{R}^N}A\big(|\nabla u^*|\big)\,\mathrm dx\le\int_{\mathds{R}^N}A\big(|\nabla u|\big)\,\mathrm dx.
\end{equation}
Moreover, if $p>1$, if
\begin{equation}\label{Eq:BrothersZiemerStatement0}
\Big|\nabla {u^*}^{-1}(0)\cap {u^*}^{-1}\big(\left]0,\Vert u^*\Vert_\infty\right[\big)\Big|=0,
\end{equation} 
if $A$ is strictly increasing, and if equality holds in \eqref{Eq:BrothersZiemerStatement}, then there exists $x_0\in\mathds{R}^N$ such that $u^*(x_0+x)=u(x)$ a.e. in $\mathds{R}^N$. 
\end{theorem}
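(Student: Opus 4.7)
The plan is to follow the classical co-area approach to Polya--Szego-type inequalities. First I would handle the measurability of $\nabla u^*$: since $u^*$ is radial and nonincreasing in $|x|$, its restriction to every ray from the origin has bounded variation, and the compact support hypothesis together with $\int A(|\nabla u|)\,\mathrm dx<+\infty$ and a truncation-and-Fubini argument place $u^*$ in $W^{1,1}_{\text{loc}}(\mathds R^N)$, so that $\nabla u^*$ is a well-defined measurable vector field.

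To prove \eqref{Eq:BrothersZiemerStatement}, I would apply the co-area formula to write
\[
\int_{\mathds R^N}A(|\nabla u|)\,\mathrm dx=\int_0^{\infty}\!\int_{\{u=t\}}\frac{A(|\nabla u|)}{|\nabla u|}\,\mathrm d\mathcal H^{N-1}\,\mathrm dt,
\]
and likewise for $u^*$. Since $u^*$ is radial and nonincreasing, $|\nabla u^*|$ is constant on every regular level set $\{u^*=t\}$, reducing the inner integral to a single quantity expressible through the isoperimetric quotient. The derivatives of the distribution function satisfy
\[
-\mu_u'(t)=\int_{\{u=t\}}\frac{\mathrm d\mathcal H^{N-1}}{|\nabla u|},\qquad -\mu_{u^*}'(t)=\frac{\mathcal H^{N-1}(\{u^*=t\})}{|\nabla u^*|}
\]
for almost every $t$, and equimeasurability gives $\mu_u\equiv\mu_{u^*}$. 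Combining these identities with the isoperimetric inequality $\mathcal H^{N-1}(\{u=t\})\ge\mathcal H^{N-1}(\{u^*=t\})$ and a H\"older/Jensen estimate that exploits the convexity of $A^{1/p}$ (via the conjugate exponents $p$ and $p/(p-1)$, with an $L^\infty$ argument for $p=1$), one obtains the desired pointwise-in-$t$ comparison, and integration in $t$ concludes \eqref{Eq:BrothersZiemerStatement}.

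For the rigidity statement, under the assumptions $p>1$, $A$ strictly increasing, \eqref{Eq:BrothersZiemerStatement0}, and equality in \eqref{Eq:BrothersZiemerStatement}, each of the three ingredients above must be saturated for almost every $t\in\left(0,\Vert u^*\Vert_\infty\right)$. Equality in H\"older/Jensen forces $|\nabla u|$ to be constant on almost every level set of $u$, while equality in the isoperimetric inequality forces $\{u>t\}$ to coincide, up to a Lebesgue null set and a translation vector $x(t)\in\mathds R^N$, with the ball $\{u^*>t\}$. Hypothesis \eqref{Eq:BrothersZiemerStatement0} excludes plateaus of $u^*$ within its essential range and guarantees that the radial profile of $u^*$ is strictly monotone on $\{0<u^*<\Vert u^*\Vert_\infty\}$, so that nearby levels determine nested, comparably thick spherical shells. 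The main obstacle I expect is the final step: upgrading the $t$-dependent vector $x(t)$ to a single translation $x_0$ independent of $t$. I would show $x(t)$ to be continuous using the level-set-uniform control of $|\nabla u|$, and then constant by connectedness of the interval $\left(0,\Vert u^*\Vert_\infty\right)$ together with the nesting of superlevel sets; \eqref{Eq:BrothersZiemerStatement0} is precisely what rules out the pathological case where a plateau of $u^*$ creates an interval of levels on which $x(t)$ is not determined, which would otherwise break uniqueness of the translation and, with it, the rigidity conclusion.
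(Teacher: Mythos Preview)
The paper does not prove this theorem; its proof consists entirely of the line ``See \cite[Theorem~1.1]{BrothersZiemer}.'' The result is quoted as a tool from the literature, so there is no argument in the paper to compare your proposal against.

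Your sketch is essentially the strategy of the original Brothers--Ziemer paper: co-area formula, isoperimetric inequality on level sets, and a Jensen/H\"older step exploiting the convexity of $A^{1/p}$, followed by an equality-case analysis. As an outline this is accurate. A few cautions if you intend to flesh it out: the measurability and $W^{1,1}_{\text{loc}}$ claim for $u^*$ is not a one-line consequence of radial monotonicity and requires the full P\'olya--Szeg\H{o} machinery (in particular one must first know that the rearrangement of a Sobolev function is Sobolev); the co-area identity in the form you wrote needs $|\nabla u|>0$ a.e.\ on the relevant level sets, which is precisely what the hypothesis \eqref{Eq:BrothersZiemerStatement0} guarantees for $u^*$ but not a priori for $u$, so the argument must route through the distribution function carefully; and the step you flag as the main obstacle---showing the translation $x(t)$ is independent of $t$---is indeed the most delicate part of Brothers--Ziemer and requires more than continuity plus nesting (one must control the measure of the set where $\nabla u=0$ on intermediate levels). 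None of this is wrong in your outline, but each point hides real work that the cited reference carries out in detail.
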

\begin{proof}
See \cite[Theorem~1.1]{BrothersZiemer}.
\end{proof}
\begin{theorem}\label{Thm:Petrosyan} Let $D$ be an open bounded domain of $\mathds{R}^N$, $u\in L^1(D)$, $f\in L^p(D)$, $1 < p <\infty$, be such that $\Delta u = f$, in $D$ in the sense of distributions. Then $u\in W_{\textrm{loc}}^{2,p}(D)$ and there exists a constant $C = C(p,n,K,D)>0$ such that:
\begin{equation}
||u||_{W^{2,p}(K)}\le C\left\{||u||_{L^1(D)}+||f||_{L^p(D)}\right\},
\end{equation}
for any $K\Subset D$. In particular, $\Delta u=f$, a.e.\ $\Omega$.
\end{theorem}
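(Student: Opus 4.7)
The plan is to establish the estimate via a Newton potential decomposition, reducing matters to the classical $L^p$-theory for the Newton kernel together with interior estimates for harmonic functions. Given $K\Subset D$, I would first pick intermediate open sets $K\Subset D_1\Subset D_2\Subset D$ and a smooth cutoff $\eta\in C^\infty_0(D)$ with $\eta\equiv 1$ on $D_2$. Let $N$ denote the Newtonian fundamental solution on $\mathds R^N$ and set $v:=N\ast(\eta f)$. The Calder\'on--Zygmund $L^p$-estimates for the Newton potential (see \cite{GilbargTrudinger}) give $v\in W^{2,p}(\mathds R^N)$ together with
\[
\|v\|_{W^{2,p}(\mathds R^N)}\le C\|\eta f\|_{L^p(\mathds R^N)}\le C\|f\|_{L^p(D)},
\]
and $\Delta v=\eta f=f$ on $D_2$ in the distributional sense.

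The residue $h:=u-v$ then lies in $L^1_{\mathrm{loc}}(D_2)$ and satisfies $\Delta h=0$ distributionally in $D_2$. By Weyl's lemma, $h$ is real-analytic on $D_2$, and the classical mean value representations, applied to $h$ and its partial derivatives on balls compactly contained in $D_2$, yield the interior estimate
\[
\|h\|_{W^{2,p}(K)}\le C\|h\|_{L^1(D_2)}\le C\big(\|u\|_{L^1(D)}+\|v\|_{L^1(D_2)}\big)\le C\big(\|u\|_{L^1(D)}+\|f\|_{L^p(D)}\big),
\]
where the last step uses $\|v\|_{L^1(D_2)}\le |D_2|^{1-1/p}\|v\|_{L^p(D_2)}\le C\|f\|_{L^p(D)}$.

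Combining these two bounds, $u=v+h$ belongs to $W^{2,p}_{\mathrm{loc}}(D)$ and
\[
\|u\|_{W^{2,p}(K)}\le\|v\|_{W^{2,p}(K)}+\|h\|_{W^{2,p}(K)}\le C\big(\|u\|_{L^1(D)}+\|f\|_{L^p(D)}\big),
\]
which is the required inequality, with $C$ depending on $p$, $N$, $K$, $D$ through the choice of the intermediate sets and the cutoff. Finally, since $u\in W^{2,p}_{\mathrm{loc}}(D)$, the distributional identity $\Delta u=f$ may be read pointwise, so $\Delta u=f$ a.e.\ in $D$. The only mildly delicate point is that the hypothesis is merely $u\in L^1(D)$, which blocks a direct appeal to the standard Calder\'on--Zygmund interior estimate (whose right-hand side typically involves $\|u\|_{L^p}$); the Newton potential decomposition circumvents this obstacle by concentrating all $L^p$-content into $v$ and treating the smooth harmonic remainder $h$ via the mean value property.
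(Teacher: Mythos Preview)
Your argument is essentially correct and is in fact the standard route to this interior $W^{2,p}$-estimate. The paper itself does not give a proof: it simply refers the reader to \cite[Theorem~1.1]{PetrosyanShahgholianUraltseva}. So there is nothing to compare at the level of argument; you have supplied what the paper outsources.

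One small imprecision is worth flagging. You assert $v=N\ast(\eta f)\in W^{2,p}(\mathds R^N)$ with a global bound. Calder\'on--Zygmund indeed gives $\|D^2 v\|_{L^p(\mathds R^N)}\le C\|\eta f\|_{L^p}$, but $v$ itself need not lie in $L^p(\mathds R^N)$: for $N\ge 3$ the Newton potential of a compactly supported source decays like $|x|^{2-N}$, which fails to be in $L^p$ when $p\le N/(N-2)$, and for $N=2$ the potential is logarithmic. This does not affect your proof, since you only ever use $\|v\|_{W^{2,p}(K)}$ (or $\|v\|_{W^{2,p}(D_2)}$), and on bounded sets the lower-order terms of $v$ are controlled by $\|\eta f\|_{L^p}$ via the explicit kernel bounds. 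Simply replace the global $W^{2,p}(\mathds R^N)$ claim by the local one and the argument goes through unchanged.
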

\begin{proof}
See \cite[Theorem~1.1]{PetrosyanShahgholianUraltseva}.
\end{proof}
It will also be useful to keep in mind standard elliptic regularity results, such as \cite[Theorem~9.19]{GilbargTrudinger}, that will play an important role in the proof of Theorem~\ref{Thm:AsymptoticProblemStatement} below.
Let us also recall a celebrated result of Gidas, Ni and Nirenberg concerning the symmetry of solutions of certain elliptic PDEs:
\begin{theorem}\label{Thm:GidasNiNirenberg}
Let $f$ be of class $C^1$ and let $u>0$ be a positive solution in $C^2\big(\overline{B_{\mathds{R}^N}(0,R)}\big)$ of $\Delta u+f(u) =0$ with $u=0$ on $|x|=R$. Then $u$ is radially symmetric, and $\frac{\partial u}{\partial r}<0$ for $0<r<R$. In particular $||u||_\infty=u(0)$.
\end{theorem}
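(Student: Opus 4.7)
The plan is to prove this by the \emph{method of moving planes}, which is the argument of Gidas, Ni and Nirenberg themselves. Fix a unit direction, say $e_1$, and for $\lambda\in(0,R)$ set $\Sigma_\lambda=\{x\in B_R(0):x_1>\lambda\}$, $T_\lambda=\{x_1=\lambda\}$, and let $x^\lambda=(2\lambda-x_1,x_2,\ldots,x_N)$ be the reflection across $T_\lambda$. Define the reflected function $u_\lambda(x):=u(x^\lambda)$ on $\Sigma_\lambda$, and the comparison function $w_\lambda=u_\lambda-u$. Because $\Delta$ is invariant under reflection and both $u_\lambda$ and $u$ solve the same PDE, $w_\lambda$ satisfies a linear equation
\begin{equation*}
\Delta w_\lambda + c_\lambda(x)\,w_\lambda=0\quad\text{in }\Sigma_\lambda,\qquad c_\lambda(x):=\int_0^1 f'\!\bigl(tu_\lambda(x)+(1-t)u(x)\bigr)\,\mathrm dt,
\end{equation*}
with $c_\lambda\in L^\infty$ (here I use $f\in C^1$ and that $u$ is bounded). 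On the boundary of $\Sigma_\lambda$ we have $w_\lambda=0$ on $T_\lambda$ and $w_\lambda\ge0$ on the spherical part, since $u>0$ inside the ball and $u=0$ on $\partial B_R$.

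The first step is to \emph{start the process}: for $\lambda$ sufficiently close to $R$, the domain $\Sigma_\lambda$ is a thin spherical cap, so the maximum principle for small domains (or the Aleksandrov--Bakelman--Pucci estimate applied to the equation for $w_\lambda^-$) forces $w_\lambda\ge0$ on $\Sigma_\lambda$. Next, decrease $\lambda$ and set
\begin{equation*}
\lambda_0:=\inf\bigl\{\lambda\in(0,R):w_\mu\ge0\text{ in }\Sigma_\mu\text{ for every }\mu\in[\lambda,R)\bigr\}.
\end{equation*}
By continuity $w_{\lambda_0}\ge0$ in $\Sigma_{\lambda_0}$. One then shows $\lambda_0=0$ by contradiction: if $\lambda_0>0$, the strong maximum principle applied to $w_{\lambda_0}$ (which does not vanish identically, as $u_{\lambda_0}>u=0$ on the reflected spherical cap) gives $w_{\lambda_0}>0$ in $\Sigma_{\lambda_0}$. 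Hopf's boundary-point lemma on $T_{\lambda_0}$ then yields $\partial_{x_1}w_{\lambda_0}<0$ on $T_{\lambda_0}\cap B_R(0)$, i.e.\ $\partial_{x_1}u(x)<0$ strictly there. Combining this strict inequality with a compactness argument inside $\Sigma_{\lambda_0}$ and the small-domain maximum principle near $\partial\Sigma_{\lambda_0}$, one obtains $w_\lambda\ge0$ on $\Sigma_\lambda$ for a range $\lambda\in(\lambda_0-\delta,\lambda_0)$, contradicting the minimality of $\lambda_0$.

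Once $\lambda_0=0$ is established, we have $u(x^0)\ge u(x)$ for every $x$ with $x_1>0$; interchanging the roles of the half-spaces gives the reverse inequality, so $u$ is symmetric under $x_1\mapsto -x_1$. Running the same argument in an arbitrary direction proves that $u$ is symmetric under every reflection through the origin, hence radially symmetric. The strict monotonicity $\partial_r u<0$ for $0<r<R$ is a by-product: applying Hopf's lemma at $\lambda=0$ (or at any $\lambda\in(0,R)$) yields $\partial_{x_1}u(x)<0$ for $x_1>0$, which in radial coordinates is exactly $u'(r)<0$. The conclusion $\|u\|_\infty=u(0)$ is then immediate.

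The main obstacle is the initiation step and the Hopf lemma application at $T_{\lambda_0}$: one needs the linearized equation for $w_\lambda$ to have bounded coefficients and a domain regular enough for the boundary point lemma. Both are fine here because $f\in C^1$, $u\in C^2(\overline{B_R})$, and the relevant boundary pieces are smooth (a flat hyperplane and a piece of a sphere). A complete proof can be found in the original reference of Gidas, Ni and Nirenberg.
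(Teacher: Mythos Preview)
Your sketch is correct: it is precisely the moving-planes argument of Gidas, Ni and Nirenberg, which is exactly what the paper invokes---the paper does not give its own proof but simply cites \cite[Theorem~1]{GidasNiNirenberg}. So your proposal and the paper's ``proof'' coincide.
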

\begin{proof}
See \cite[Theorem~1]{GidasNiNirenberg}.
\end{proof}
\subsection{Regularity of the obstacle problem under the volume constraint}\label{sub:regularityconstr}
We will need a regularity result for solution of variational problems with constraints.
The following theorem is obtained with a slight modification of the arguments used in the proof of \cite[Thm.~1, Thm.~2]{Eisen83}. Compared with the results of \cite{Eisen83}, here we consider the case where an extra non-homogeneous term is present. For our purposes this extra term denoted by $f$ is in $L^\infty$, and it only depends on the unknown function $u$ (and not on its gradient $\nabla u$).
For the sake of completeness, we will prove here a statement which is more general than the one we need in the proof of Theorem~\ref{Thm:AsymptoticEquivalentOfTheRadius}. 
\smallskip

Let us consider a bounded Lipschitz domain $\Omega\subset\mathds R^N$; given a constant $V\in\mathds R$ and functions $\psi_1,\psi_2,\chi\in H^1( \Omega )$ with 
$\psi_1\le \chi \le \psi_2$ and:
\[\int_\Omega\psi_1\, dx<V<\int_\Omega\psi_2\, dx,\]
let us denote by
\[
\mathbf K_{\psi_1,\psi_2, \chi,V,\Omega}= \left\{ v\in H^1( \Omega ) : (v-\chi) \in H^1_0(\Omega),\ \int _ \Omega  v\; dx= V,\  \psi_1\leq v\leq\psi_2\right\}.
\]
In our next result, we will consider only the case where the function $\psi_1$ is constant.
\begin{theorem}\label{Thm:FreeBoundaryRegularityForDerivative}
 If $u\in\mathbf K_{\psi_1,\psi_2, \chi,V,\Omega}$ is a solution of the variational inequality:
\begin{equation}\label{Eq:Eisen1}
\int_{\Omega} \nabla u\cdot\nabla(u-v) \, d x \leq \int _ { \Omega } f(x)\big(u(x) - v(x)\big) dx,
\end{equation}
for all $v\in\mathbf K_{\psi_1,\psi_2, \chi,V,\Omega},$ where $\psi_1$ is a constant function, and
$$
f\in L^\infty( \Omega ),\quad\psi_2\in C^{1,\alpha}_{\text{loc}}(\Omega),$$ then $\nabla u\in C^{0,\alpha}_{\text{loc}}(\Omega)$, i.e., $u\in C^{1,\alpha}_{\text{loc}}(\Omega)$. Moreover, if $\psi_2\in C^{1,\alpha}(\overline{\Omega})$, then $u\in C^{1,\alpha}(\overline{\Omega})$.
\end{theorem}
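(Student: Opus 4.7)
The plan is to eliminate the affine volume constraint $\int_\Omega u\,dx=V$ by introducing a Lagrange multiplier $\lambda\in\mathds R$, thereby reducing the problem to the standard two-obstacle variational inequality with bounded right-hand side, to which the regularity theory of \cite{Eisen83} applies. The starting observation is that \eqref{Eq:Eisen1} is equivalent to the statement that $u$ is the unique minimizer of the strictly convex quadratic energy
\[
J(v)=\tfrac12\int_\Omega|\nabla v|^2\,dx-\int_\Omega f\,v\,dx
\]
on the closed convex set $\mathbf K_{\psi_1,\psi_2,\chi,V,\Omega}$, by the classical correspondence between variational inequalities and constrained convex minimization.

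To produce the Lagrange multiplier, one exploits the strict inequalities $\int_\Omega\psi_1\,dx<V<\int_\Omega\psi_2\,dx$ together with $\psi_1\le\chi\le\psi_2$: these guarantee that Slater's condition holds, i.e.\ there exists $\eta>0$ and $v_0\in H^1(\Omega)$ with $v_0-\chi\in H^1_0(\Omega)$, $\psi_1+\eta\le v_0\le\psi_2-\eta$, and $\int_\Omega v_0\,dx=V$. By strong duality for convex minimization with a single affine equality constraint, there exists $\lambda\in\mathds R$ such that $u$ minimizes $J(v)-\lambda\int_\Omega v\,dx$ on the larger set
\[
\mathbf K'=\big\{v\in H^1(\Omega):\ v-\chi\in H^1_0(\Omega),\ \psi_1\le v\le\psi_2\big\}.
\]
Equivalently, $u$ solves the unconstrained two-obstacle variational inequality
\[
\int_\Omega\nabla u\cdot\nabla(u-v)\,dx\le\int_\Omega(f+\lambda)(u-v)\,dx,\qquad\forall\,v\in\mathbf K',
\]
with modified right-hand side $\widetilde f:=f+\lambda\in L^\infty(\Omega)$.

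At this point the hypotheses on $\psi_1$ (constant), $\psi_2\in C^{1,\alpha}_{\text{loc}}(\Omega)$, and $\widetilde f\in L^\infty(\Omega)$ put the problem exactly within the scope of \cite[Thm.~1, Thm.~2]{Eisen83}, modulo the presence of the bounded forcing $\widetilde f$, which was not included in the original statement. Because $\widetilde f$ is essentially bounded, it contributes only a lower-order additive perturbation in each of Eisen's energy and difference-quotient estimates: on the non-contact set $\{\psi_1<u<\psi_2\}$ one has the distributional equation $-\Delta u=\widetilde f$, so interior Calder\'on--Zygmund theory (Theorem~\ref{Thm:Petrosyan}) provides $W^{2,p}_{\text{loc}}$ bounds for every $p<\infty$; on the contact set $\{u=\psi_2\}$ one has $\nabla u=\nabla\psi_2\in C^{0,\alpha}_{\text{loc}}$; and on $\{u=\psi_1\}$ (constant) one has $\nabla u=0$. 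Patching these via Eisen's argument delivers $u\in C^{1,\alpha}_{\text{loc}}(\Omega)$. When $\psi_2\in C^{1,\alpha}(\overline\Omega)$, one reruns the argument in straightened boundary charts of the Lipschitz domain $\Omega$ using $\chi$ as the Dirichlet datum: tangential difference-quotient estimates are unaffected by flattening, and normal regularity is recovered from $-\Delta u=\widetilde f$ on the non-contact portion near $\partial\Omega$, yielding $u\in C^{1,\alpha}(\overline\Omega)$.

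The main obstacle is the second step: rigorously securing the Lagrange multiplier in the presence of a two-sided obstacle. The hypothesis that $\psi_1$ is constant is what makes this feasible, since, combined with the strict inequality $\int_\Omega\psi_1\,dx<V$, it produces an admissible competitor strictly separated from both obstacles, so that small volume-preserving perturbations $\varphi-\bigl(\int\varphi\bigr)\varphi_0$ (with $\varphi_0\in C_c^\infty$ supported on $\{\psi_1<u<\psi_2\}$ and $\int\varphi_0=1$) are genuinely admissible in \eqref{Eq:Eisen1}. Once $\lambda$ is in hand, threading the $L^\infty$ source $\widetilde f$ through Eisen's proof is merely bookkeeping, which accounts for the authors' remark that the modification of \cite{Eisen83} is indeed ``slight.''
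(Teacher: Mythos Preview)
Your approach differs from the paper's. The paper keeps the volume constraint throughout and follows Eisen's original scheme directly: on each small ball $B_R(x_0)\subset\Omega$ it replaces $u$ by the harmonic function $U$ with the same boundary data (truncated by the obstacles) and restores the lost volume by adding $\tilde t\varphi$, where $\varphi$ is a fixed bump supported in a disjoint ball inside $\{\psi_1+\varepsilon_0<u<\psi_2-\varepsilon_0\}$ and $|\tilde t|\lesssim R^{n/2+1}$. Plugging this competitor into \eqref{Eq:Eisen1} and bounding the new $f$-term by $\|f\|_\infty\int_{B_R}|u-U|\lesssim R^{n+1}$ produces a Campanato decay estimate for $\int_{B_\rho}|\nabla u-(\nabla u)_\rho|^2$, hence $C^{1,\alpha}$. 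No Lagrange multiplier is ever introduced.

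Your reduction via a multiplier is a legitimate alternative in spirit, but the justification you give has a gap. The obstacle set $\mathbf K'$ has empty interior in $H^1(\Omega)$, so Slater's condition in the textbook sense is vacuous, and your claimed Slater point $v_0$ with $\psi_1+\eta\le v_0\le\psi_2-\eta$ need not exist: the hypotheses do not forbid $\psi_2$ from touching the constant $\psi_1$. A correct route to $\lambda$ is instead to let $u_\lambda$ be the unique minimizer of $J-\lambda\!\int(\cdot)$ on $\mathbf K'$, check that $\lambda\mapsto\int u_\lambda$ is continuous and monotone with limits $\int\psi_1$ and $\int\psi_2$, and apply the intermediate value theorem; strict convexity then forces $u_\lambda=u$. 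Once $\lambda$ is in hand you are reduced to the unconstrained two-obstacle problem with bounded right-hand side, whose $C^{1,\alpha}$ regularity is classical---but note that \cite{Eisen83} is precisely about the \emph{volume-constrained} problem, so invoking it after removing the constraint is the wrong citation; you want the earlier unconstrained theory (or simply observe that Eisen's argument collapses to it with $\tilde t\equiv0$). Finally, your closing ``patching'' sentence is too casual: regularity on the separate contact and non-contact regions is trivial; the entire content is the uniform Campanato estimate across the free boundary, which is what the literature supplies and what your sketch omits.
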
 
\begin{remark} Theorem \ref{Thm:FreeBoundaryRegularityForDerivative} is an essential regularity results, that will needed in the proof of Theorem \ref{Thm:AsymptoticEquivalentOfTheRadius} to establish that the radial solution of a certain auxiliary problem has vanishing normal derivative along the boundary of its support.  
\end{remark}
\begin{proof}[Proof of Theorem~\ref{Thm:FreeBoundaryRegularityForDerivative}]
For the sake of brevity, we will denote $\mathbf{K}:=\mathbf K_{\psi_1,\psi_2, u,V,\Omega}$.
For every subset $A\subseteq\mathds R^n$, $|A|$ denotes the Lebesgue's measure of $A$. When 
$$
\Big|\big\{ x \in \Omega : \psi _ { 1 } < u(x) < \psi _ { 2 }(x)\big \}\Big|=0,
$$
the statement of the theorem becomes trivial since it means that $u=\psi_1$, a.e., or $u=\psi_2$, a.e. Thus from now on we can assume that
$$
\Big|\big\{ x \in \Omega : \psi _ { 1 } < u(x) < \psi _ { 2 } ( x )\big \}\Big|> 0;
$$
thus, we get the existence of $\varepsilon_0=\varepsilon_0(u, \psi_1, \psi_2)>0$ such that
$$
\Big|\big\{ x \in \Omega : \psi _ { 1 } + \varepsilon_0<u(x)<\psi_{2}(x) - \varepsilon_0 \big\}\Big|>0.
$$
Now we construct a function $\hat{\varphi}\colon\mathds R\to[0,1]$, $\hat{\varphi}\in C^\infty(\mathds R)$, such that $\hat{\varphi}\vert_{\left]-\infty,\varepsilon_0\right]}=0$ and $\hat{\varphi}\vert_{\left[2\varepsilon_0,+\infty\right[}=1$. We can find a small ball $B\Subset\Omega$ and a function $\nu\in W_0^{1,2}(B)$, satisfying $0 \leq \nu \leq 1 ,$ with the property that
\begin{equation}\label{Eq:EisenPropertiesofvarphi}
\varphi(x):=\nu(x)\cdot\hat{\varphi}\left(u(x)-\psi_{1}(x)\right)\cdot\hat{\varphi} \left(\psi_{2}(x)-u(x)\right)
\end{equation}
does not vanish identically, and
\begin{equation}\label{Eq:EisenPropertiesofvarphi0}
\int_B\varphi(x)\, dx>0.
\end{equation}
From the construction of $\varphi$ we have
$$
\psi _ { 1 } \leqq u + t \varphi \leqq \psi _ { 2 } \ \text { a.e.\ in }\Omega,\quad\forall\, t\in\left]-\varepsilon_0,\varepsilon_0\right[.
$$
Now let $B _ { R } \left(x_0 \right) \subset \Omega ,$ where $R > 0$ is arbitrary, and $B _ { R } \left(x_0 \right) \cap B = \emptyset$.

\begin{remark} 
In order to obtain the following parts of the proof for all $B _ { R } \left(x_0 \right) \subset\Omega$ we need the existence of two disjoint small balls $B^1$, $B ^ { 2 }$, and two functions $\varphi _ { 1 } \in H^ { 1 } \left( B ^ { 1 } \right) , \varphi _ { 2 } \in H^ { 1 } \left( B ^ { 2 } \right)$ with the same properties as $\varphi$, i.e., satisfying \eqref{Eq:EisenPropertiesofvarphi} and \eqref{Eq:EisenPropertiesofvarphi0}. 
The existence of such function can be shown by simply repeating the existence argument above, with $B^1$ small enough. Then, for $R_0>0$ sufficiently small,  we have $B _ { R _ { 0 } } \left(x_0\right) \subset \Omega$, and either $B_{R_0}\left(x_0 \right) \cap B^1=\emptyset$ or $B_{R_0} \left( x_0 \right) \cap B ^ { 2 } = \emptyset$.  
\end{remark}
For sake of simplicity we deal only with $B$ and $\varphi$, for a more detailed treatment of this standard isoperimetric argument compare \cite[Example~2.13, p.\ 279--280]{Maggi2012}. Choose $R_0$ small enough as prescribed by the preceding remark and set $B:=B^i$ and $\varphi:=\varphi_i$, where $i\in\{1,2\}$ is such that $B_{R_0}\left(x_0 \right) \cap B^i=\emptyset$. We want to show the desired regularity of $U$ inside the ball $B_R(x_0)$ for any $0<R<R_0$ and $x_0\in\Omega$. With this aim in mind let $U$ be the harmonic function on $B _ { R }:=B_R(x_0)$ with the boundary values of $u$, i.e., 
\begin{equation}\label{Eq:Eisen5}
\int _ { B _ { R } } \nabla U \nabla\phi d x = 0,\forall\phi\in W^{1,2}_0(B_R), \text{ and } U-u\in W^{1,2}_0(B_R).
\end{equation}
By a standard argument (see  \cite[Lemma 7.I]{Campanato65}) we have for $0 < \rho < R $ 
\begin{equation}\label{Eq:Eisen6}
\int _ { B_\rho } | \nabla U | ^ { 2 } d x\le\left( \frac { \rho } { R } \right) ^ { n }\underset { B _ { R } } { \int } | \nabla U| ^ { 2 } d x.
\end{equation}
Furthermore we introduce
\[\widetilde U=\begin{cases}U,&\text{in $B_R$};\\ u,&\text{in $\Omega\setminus B_R$}.\end{cases}\]
and we set $v:=\left(\tilde{U} \vee \psi _ { 1 } \right) \wedge \psi _ { 2 }$. By construction we know that there is $i\in\{1,2\}$, such that $B_i\cap B_R=\emptyset$. Set $\varphi:=\varphi_i$, $B:=B_i$, and let $\tilde{t}:=\tilde{t}_{B_R}\in\mathds R$ verifying 
$$
\int _ { \Omega } (v + \tilde { t } \varphi ) d x = V=\int _ { \Omega\setminus B _ { R } } u+\int _ { B _ { R }} u=\int _ { \Omega\setminus B _ { R } } v+\int _ { B _ { R }} v+\tilde { t }\int_B\varphi.
$$
It follows that 
$$
\tilde{t}=\int _ { B _ { R } } \left( u - \left( U\vee\psi _ { 1 } \right) \wedge \psi _ { 2 } \right) dx\left( \int_B \varphi dx \right) ^ { - 1 }.
$$
As it is immediate to check, inside the ball $B_R$ and also in the entire $\mathds R^n$ we have 
\begin{equation}\label{Eq:MajoratingTrick}
|u-v|\le|u-\tilde{U}|.
\end{equation} 
This simple observation allow us to estimate the value of $\tilde{t}$, i.e.,
\begin{equation}\label{Eq:Eisen9}
| \tilde { t } | \leq \int _ { B _ { R } } | u - U | d x \left( \int _ { B } \varphi d x \right) ^ { - 1 }.
\end{equation}
We claim that if we choose $0<R_0$ small enough then for every $0<R<R_0$ we have $|\tilde{t}_{B_R}|<\varepsilon_0$. By an application of H\"older inequality and Gagliardo-Nirenberg inequality (which is possible because $U-u\in W^{1,2}_0(B_R)$) we have that 
\begin{equation}\label{Eq:Eisen10}
\underset { B _ { R } } { \int } | u - U | d x \leq(\omega_nR^n)^{\frac1n+\frac12}\left( \int _ { B_R } | \nabla ( u - U ) | ^ { 2 } d x \right) ^ { \frac { 1 } { 2 } }.
\end{equation}
On the other hand
$$
\underset { B _ { R } } { \int } | \nabla U | ^ { 2 } d x\le\underset { B _ { R } } { \int } | \nabla u | ^ { 2 } d x,
$$ 
since $\int_{B_R}\nabla U\cdot\nabla u=\underset { B _ { R } } { \int } | \nabla U | ^ { 2 } d x$ and $$-\int_{B_R}\nabla U\cdot\nabla u+\int_{B_R}|\nabla u|^2=\int_{B_R}|\nabla (U-u)|^2\ge0.$$
Hence
\begin{equation}\label{Eq:Eisen11}
\underset { B _ { R } } { \int } | u - U | d x \leq(\omega_nR^n)^{\frac1n+\frac12}\left( 2 \int _ { \Omega } | \nabla u | ^ { 2 } d x \right)^{\frac12}.
\end{equation}
From \eqref{Eq:Eisen9} and \eqref{Eq:Eisen11} we get easily
$$
| \tilde { t } | \leq(\omega_nr^n)^{\frac1n+\frac12}\left( 2 \int _ { \Omega } | \nabla u | ^ { 2 } d x \right)^{\frac12}\left( \int _ { B } \varphi d x \right) ^ { - 1 }.
$$
From the last inequality we see that for a suitable $R_0=R_0(\varphi, u)>0$ we have that for any $0<R<R_0$ it holds $|\tilde{t}(B_R,\varphi, \psi_1,\psi_2, u)|<\varepsilon_0$. This readily implies that for every $0<R<R_0$ we have $v+\tilde{t}\varphi\in\mathbf{K}$. Thus
$$
\int _ { \Omega } \nabla u \nabla ( u - v - \tilde{t} \varphi ) d x \le\int_\Omega f(x)(u-v)(x)dx+\tilde{t}\int_{B}f\varphi dx.
$$
At first we reduce the problem to the case 
$$
\psi _ { 1 } = 0.
$$
In fact, $u \in K$ is a solution of \eqref{Eq:Eisen1} with $f_i=0,\forall i\in\{1,...,n\}$ if and only if $\overline { u } = \left( u - \psi _ { 1 } \right) \in \overline{K}$ solves
$$\int_{\Omega} \nabla \overline {u} \nabla ( \overline {u} - \overline {v} ) d x \leq\int _ { \Omega } f(u-v)dx,$$ for all 
\begin{multline*}
\overline {v}\in\overline{ K } : = \Big\{ \tilde{v}\in W^{1,2}( \Omega ) | \tilde{v}- \overline{u}\in W_0^{1,2}( \Omega ),\quad \int _ { \Omega }\tilde{v}d x = V - \int _ { \Omega } \psi _ { 1 }\,  d x,\\ 0 \leq\tilde{v}\le\psi=\psi_2-\psi_1\}\Big\}.
\end{multline*}

Let $\overline { U }$ be the harmonic function on $B_R:=B_R(x_0)\Subset\Omega$ with boundary values $u,$ then for $0 < \rho<R\le R_0$
\begin{equation}\label{Eq:Eisen20}
\int_{B_\rho}\left| \nabla \overline { U } - ( \nabla \overline { U } ) _ { \rho } \right| ^ { 2 } d x\le c_ { 4 } \left( \frac { \rho } { R } \right) ^ { n  + 2 } \int _ { B_R } \left| \nabla \overline { U } - ( \nabla\overline{U})_{R}\right| ^ { 2 } d x,
\end{equation}
where $( \nabla \overline { U } )_\rho:=\frac1{|B_\rho(x_0)|}\int_{B _ { \rho }(x_0)} \nabla \overline { U } d x$, and $c_4=c_4(n)>0$. We set
$$
\overline{v} = \left\{ \begin{array} { l l l } {\overline{u}} & { \text { on } } & { \Omega > B _ { R } },\\ { \overline {U} \wedge \psi } & { \text { on } } & B_R.\end{array} \right.
$$
With $\varphi$ and $\tilde { t }$ as before we have that $\overline{v} + \tilde { t } \varphi \in \overline { K }$ and for
$0 < R \leq R_0$,
\begin{eqnarray*}
\int_{ B _ { R } } \nabla \overline { u }\cdot \nabla ( \overline { u } - \overline { U } \wedge \psi ) & \le & \tilde{t} \int _ { B } \nabla \overline{u}\cdot \nabla \varphi d x\\ 
& + & \int_{B_R} f(x)(\overline {u}-\overline {U} \wedge \psi)(x)+\tilde{t}\int_{B_R}f\varphi dx.
\end{eqnarray*}
Recall the following easy inequality:
\begin{multline}\label{Eq:NonHomogeneusTermEstimates}
\left|\int_{B_R} f(x)(u-v)(x)dx\right|  \stackrel{\eqref{Eq:MajoratingTrick}}{\le}  ||f||_{\infty,\Omega}\int_{B_R}|u-U|dx\\ 
\stackrel{\eqref{Eq:Eisen11}}{\le}  ||f||_{\infty,\Omega}(\omega_nR^n)^{\frac1n+\frac12}\left( \int _ { B_R } | \nabla ( u - U ) | ^ { 2 } d x \right) ^ { \frac { 1 } { 2 } }.
\end{multline}
We write $\overline { u } - \overline { U } \wedge \psi = \overline { u } - \overline { U } + \overline { U } - \overline { U } \wedge \psi ,$ we make use of
$$
0=\int_{B_R} - \gamma \nabla ( \overline { u } - \overline { U } \wedge \psi ) d x
$$
which is true for all $\gamma \in \mathds { R } ^ { n } ,$ and observe that \eqref{Eq:Eisen5}, \eqref{Eq:Eisen9}, \eqref{Eq:Eisen10}, and \eqref{Eq:NonHomogeneusTermEstimates} still hold when applied to $\overline{u}, \overline{v}, \overline{U}$. 
Then we get
\begin{eqnarray*}
\int_{ B _ { R } } \nabla \overline { u } \nabla ( \overline { u } - \overline { U } \wedge \psi ) & \le & \tilde{t} \int _ { B } \nabla \overline{u} \nabla \varphi d x\\
& + & \int_{B_R} f(x)(\overline {u}-\overline {U} \wedge \psi)(x)+\tilde{t}\int_{B_R}f\varphi dx.\\
\end{eqnarray*}
In fact we are able to prove by elementary meanings the following inequality just in the case $\psi_1=const.\in\mathds R$ (which implies $\overline { U }=U$)
\begin{eqnarray*}
\int _ { B _ { R } } | \nabla (\overline{u}-\overline{U})|^2dx & \le & \tilde{t} \int _ { B } \nabla \overline{u} \nabla \varphi d x\\
& + & \int_{B_R} f(x)(\overline {u}-\overline {U} \wedge \psi)(x)+\tilde{t}\int_{B_R}f\varphi dx\\
& + & \int_{ B _ { R } } \nabla \overline { u } \nabla ( \overline {U} \wedge \psi- \overline{U}).
\end{eqnarray*}
Hence
\begin{equation}\label{Eq:Eisen22}
\int _ { B _ { R } } | \nabla ( \overline { u } - \overline { U } ) | ^ { 2 } d x\le c _ { 5 } \left\{ \int _ { B _ { R } } | \nabla ( \overline { U } - \overline { U } \wedge \psi )|^2dx+\int _ { B _ { R } } \left| \nabla \psi _ { 1 } - \gamma \right| ^ { 2 } d x + R ^ { n + 2 }\right\},
\end{equation}
where $c_ { 5 }=c_5(n,\varphi, f, u)>0$ depends on $\tilde{C}$ and $n$.
To estimate
$$
\int _ { B_R }  | \nabla ( \overline { U } - \overline { U } \wedge \psi ) | ^ { 2 } d x,
$$
we observe that 
$$
\int_{B_R} \nabla ( \overline { U } - \overline { U } \wedge \psi)\nabla\phi dx=-\int_{B_R}(\nabla(\overline{U}\wedge\psi)-\delta)\cdot\nabla\phi dx
$$
being true for each $\delta\in\mathds R^n$ and each $\phi\in W^{1,2}_0(B_R)$. We choose $\phi:= ( \overline { U } - \overline { U } \wedge\psi)$ and get
\begin{eqnarray}\label{Eq:Eisen23}\nonumber
\underset { B _ { R } }\int | \nabla ( \overline { U } - \overline { U } \wedge \psi ) | ^ { 2 } d x & = & \int_{\{\overline{U}>\psi\}} | \nabla ( \overline { U } - \overline { U } \wedge \psi ) | ^ { 2 } d x\\ 
& - & \int_{\{\overline{U}>\psi\}}(\nabla(\overline{U}\wedge\psi)-\delta)\cdot\nabla ( \overline { U } - \overline { U } \wedge\psi)dx\\ \nonumber
& = & \int_{\{\overline{U}>\psi\}}(\nabla(\psi)-\delta)\cdot\nabla ( \overline { U } - \psi)dx\\ \nonumber
& \stackrel{H\text{\"o}lder}{\le} & \{\int_{\{\overline{U}>\psi\}}|\nabla(\psi)-\delta|^2\}^{\frac12}\{\int_{\{\overline{U}>\psi\}}|\nabla (\overline { U } - \psi)|^2\}^{\frac12}.
\end{eqnarray}
Dividing by $\{\int_{\{\overline{U}>\psi\}}|\nabla (\overline { U } - \psi)|^2\}^{\frac12}$ yields to 
\begin{eqnarray*}
\underset { B _ { R } }\int | \nabla ( \overline { U } - \overline { U } \wedge \psi ) | ^ { 2 } d x & = & \int_{\{\overline{U}>\psi\}}|\nabla (\overline { U } - \psi)|^2\\
 & \le & \int_{B_R} | \nabla \psi - \delta |^2dx.
\end{eqnarray*}
Combining \eqref{Eq:Eisen22} and \eqref{Eq:Eisen23} we deduce
\begin{equation}\label{Eq:Eisen24}
\int_{B_R} | \nabla ( \overline { u } - \overline { U } ) | ^ { 2 } dx \le c_5\left\{\int_{B_R} | \nabla \psi - \delta | ^ { 2 } d x+\int_{B_R} |\gamma| ^ { 2 } d x+R^{n+2}\right\}.
\end{equation} 
As $\psi_1,\psi_2,\psi\in C^{1,\alpha}_{\text{loc}}(\Omega),$ it is obvious that choosing $\delta = ( \nabla \psi ) \left(x_0\right)$ and $\gamma=0$ we obtain
\begin{equation}\label{Eq:Eisen25}
\int_{B_R} | \nabla ( \overline { u } - \overline { U } ) | ^ { 2 } dx \le c_6R ^ {n + 2 \alpha },
\end{equation}
for $0<R\leq R _ { 0 } \le 1$. From this, we conclude that 
\begin{multline*}
\left|\int_{B_R}|\nabla\overline{u}-(\nabla\overline{u})_R|^2dx-\int_{B_R}|\nabla\overline{U}-(\nabla\overline{U})_R|^2dx\right| \\\le \tilde{c}_6R^{n+2\alpha}+\int_{B_R} | (\nabla\overline{u})_\rho - (\nabla\overline { U })_\rho ) | ^ { 2 }dx \\
 \stackrel{\text{Jensen}}{\le}  \tilde{c}_6R^{n+2\alpha}+\int_{B_R} | \nabla ( \overline { u } - \overline { U } ) | ^ { 2 }dx,
\end{multline*}
this last equation together with \eqref{Eq:Eisen20} for $0 < \rho \leq R$
\begin{eqnarray}\label{Eq:Eisen26}\nonumber
\int_{B_\rho}|\nabla\overline{u}-(\nabla\overline{u})_\rho|^2dx\hspace{-2cm} & \le  2\left(\frac\rho R\right)^{n+2}\int_{B_R}|\nabla\overline{U}-(\nabla\overline{U})_R|^2dx\\\nonumber
&+ 2\int_{B_\rho} | \nabla ( \overline { u } - \overline { U } ) | ^ { 2 } dx\\ \nonumber
& +  2\int_{B_\rho} | (\nabla\overline{u})_\rho - (\nabla\overline { U })_\rho ) | ^ { 2 } dx\\ \nonumber
& \stackrel{\eqref{Eq:Eisen25}}{\le}  2\left(\frac\rho R\right)^{n+2}\int_{B_R}|\nabla\overline{U}-(\nabla\overline{U})_R|^2dx+2c_6R^{n+2\alpha}\\ 
& {+}  2\int_{B_R} | \nabla ( \overline { u } - \overline { U } ) | ^ { 2 } dx\\ \nonumber
& \stackrel{\eqref{Eq:Eisen25}}{\le}  2\left(\frac\rho R\right)^{n+2}\int_{B_R}|\nabla\overline{U}-(\nabla\overline{U})_R|^2dx+4c_6R^{n+2\alpha}\\ \nonumber
& \stackrel{\text{Jensen and }\eqref{Eq:Eisen25}}{\le}   c_7\left\{\left(\frac\rho R\right)^{n+2}\int_{B_R}|\nabla\overline{u}-(\nabla\overline{u})_R|^2dx+ R^{n+2\alpha}\right\}.
\end{eqnarray}
Again Lemma $6.1$ of \cite{Campanato65} or Lemma $2.1$ of \cite{Giaquinta2016multiple}, or Lemma 8.23 di \cite{GilbargTrudinger} combined with \eqref{Eq:Eisen26} imply
\begin{equation}\label{Eq:CampanatoEstimateForDerivatives}
\int_{B_\rho}|\nabla\overline{u}-(\nabla\overline{u})_\rho|^2dx\le c_8\left(\frac\rho R\right)^{n+2\alpha}(\int_\Omega|\nabla\overline{u}|^2+1),
\end{equation}
and a well-known result of Campanato (see also \cite{Giaquinta2016multiple}) says that $\overline {{u}}$, and hence $u$, belongs to $C^{1,\alpha}_{\text{loc}} (\Omega)$ and also that $u\in C^{1,\alpha}(\overline{\Omega})$, provided $\psi_2\in C^{1,\alpha}(\overline{\Omega})$ and $f\in L^\infty(\Omega)$. 
\end{proof}
\subsection{Existence of a radial solution}
\label{sub:existenceradial}
We are now ready to prove one of the central results of the paper, which gives the existence of a compactly supported radial solution for the Problem ($P_\gamma$), introduced in Section~\ref{sub:auxpb}.

\begin{theorem}\label{Thm:AsymptoticProblemStatement} 
Problem ($ P_\gamma$) has at least one solution $U_\gamma\in K_\gamma\cap H^1_\text{rad}(\mathds{R}^N)$ for every $\gamma\in\left]0, +\infty\right[$. Moreover, there exists $\gamma_0=\gamma_0\big(N,W\vert_{[0,s_0]}\big)\in\left]0,+\infty\right[$ such that, for every $\gamma\ge\gamma_0:$
\begin{equation}\label{Eq:AsymptoticProblemStatement0}
E(U_\gamma)<0,
\end{equation}
\begin{equation}\label{Eq:AsymptoticProblemStatement0+}
\int_{\mathds R^N} U_\gamma\, dx=\gamma, 
\end{equation}
and there exists $R_\gamma=R(U_\gamma)>0$ such that 
\begin{equation}\label{Eq:AsymptoticProblemStatement1}
\supp(U_\gamma)=\overline{B_{\mathds{R}^N}(0,R_\gamma)}. 
\end{equation} 
The function $U_\gamma$ is of class $C^{1,\alpha}$ in $\mathds R^N$, for every $\alpha\in\left]0,1\right[$, and it is of class $C^2$ on its positivity set. 
\end{theorem}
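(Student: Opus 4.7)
\emph{Existence.} I would take a minimizing sequence $(u_n)\subset K_\gamma$ for $E$; by Lemma~\ref{Lemma:EnergyDecreasingUnderScwartzSymmetrization} each $u_n$ may be replaced by its symmetric decreasing rearrangement $u_n^*\in K_\gamma\cap H^1_{\text{rad}}(\mathds R^N)$, which still minimizes. From $E(u_n^*)\le E(0)=0$, the subcritical growth \eqref{Eq:Potential0}, the bound $\|u_n^*\|_1\le\gamma$, and interpolation against the Sobolev embedding $H^1\hookrightarrow L^{2^*}$, I obtain a uniform $H^1$-bound. Passing to a subsequence, $u_n^*\rightharpoonup U_\gamma$ weakly in $H^1_{\text{rad}}$; Strauss's compact embedding of radial Sobolev functions into $L^p$ for subcritical $p$ yields strong $L^p$-convergence, hence $\int W(u_n^*)\to\int W(U_\gamma)$; combined with weak lower semicontinuity of the Dirichlet integral and Fatou's lemma for the constraint, this produces a radial minimizer $U_\gamma\in K_\gamma\cap H^1_{\text{rad}}$. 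For \eqref{Eq:AsymptoticProblemStatement0} I would test with $v_r$ equal to $s_0$ on $B_{r-1}$, interpolated linearly to $0$ on the annulus $B_r\setminus B_{r-1}$: direct computation gives $\tfrac12\|\nabla v_r\|_2^2=O(r^{N-1})$ and $\int W(v_r)\,\mathrm dx=-m\omega_Nr^N+O(r^{N-1})$, so $E(v_r)<0$ for $r$ large, while $v_r\in K_\gamma$ as soon as $s_0\omega_Nr^N\le\gamma$; this determines the threshold $\gamma_0$.

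\emph{Saturation and compact support.} Assume $\int U_\gamma<\gamma$. Testing \eqref{Eq:AsymptoticEulerLagrangeInequality} with $v=U_\gamma\pm\phi$ for $\phi\in C_c^\infty(\mathds R^N)$, $\phi\ge 0$, supported in the dead zone $\{U_\gamma=0\}$ (where $\nabla U_\gamma=0$ a.e.\ and $W'(U_\gamma)=W'(0)=0$) and with $\int\phi$ small of either sign, forces $\lambda_\gamma=0$. Then \eqref{Eq:WeakFormulationOnThePositivitySet} reduces to $-\Delta U_\gamma+W'(U_\gamma)=0$ on the open positivity set $\Gamma$; since $U_\gamma$ is radial nonincreasing and continuous (by the $C^{1,\alpha}$-regularity proved below), $\Gamma$ is either a ball $B_R$ with $R<+\infty$ or all of $\mathds R^N$. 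If $R<+\infty$, then near $\partial B_R$ one has $W'(U_\gamma)>0$ (using $W'(0)=0$ and $W''(0)>0$), so $\Delta U_\gamma>0$ there; the Hopf boundary-point lemma then yields $\partial_\nu U_\gamma(x_0)<0$ strictly at every $x_0\in\partial B_R$, contradicting $U_\gamma\in C^{1,\alpha}(\mathds R^N)$ together with $U_\gamma\equiv 0$ outside $\overline{B_R}$. If $R=+\infty$, then $U_\gamma\in H^1$ decays at infinity and the Pohozaev identity for $-\Delta U+W'(U)=0$, with $F(u):=-W(u)$, gives
\[
\tfrac{N-2}{2}\int|\nabla U_\gamma|^2\,\mathrm dx=-N\int W(U_\gamma)\,\mathrm dx,
\]
whence $E(U_\gamma)=\tfrac1N\int|\nabla U_\gamma|^2\,\mathrm dx\ge 0$, contradicting \eqref{Eq:AsymptoticProblemStatement0}. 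This proves \eqref{Eq:AsymptoticProblemStatement0+}. The same dichotomy in the saturated case, now with $\lambda_\gamma\ge 0$, rules out $\Gamma=\mathds R^N$: if it held, the decay $U_\gamma\to 0$ together with $-\Delta U_\gamma+W'(U_\gamma)=\lambda_\gamma$ would force $\lambda_\gamma=W'(0)=0$, reverting to the previous contradictions. Hence $\Gamma=B_{R_\gamma}$ with $R_\gamma<+\infty$, yielding \eqref{Eq:AsymptoticProblemStatement1}.

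\emph{Regularity and main obstacle.} The $C^{1,\alpha}$-regularity of $U_\gamma$ on $\mathds R^N$ would follow from Theorem~\ref{Thm:FreeBoundaryRegularityForDerivative} applied on bounded Lipschitz subdomains, with constant lower obstacle $\psi_1\equiv 0$, right-hand side $f=\lambda_\gamma-W'(U_\gamma)\in L^\infty$ (using the $L^\infty$-bound on $U_\gamma$ coming from radial monotonicity and the mass constraint), and upper obstacle inactive. On $\Gamma$, the equation $-\Delta U_\gamma=\lambda_\gamma-W'(U_\gamma)$ has $C^{0,\alpha}$ right-hand side, so standard Schauder theory upgrades $U_\gamma$ to $C^{2,\alpha}_{\text{loc}}$, hence to $C^2$ on its positivity set. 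The main obstacle will be the saturation/compact-support dichotomy: rigorously establishing $\lambda_\gamma=0$ in the slack regime within the variational-inequality framework, ensuring enough regularity up to $\partial\Gamma$ to combine $C^{1,\alpha}$ with Hopf, and verifying the decay needed to kill the boundary terms at infinity in Pohozaev when $\Gamma=\mathds R^N$.
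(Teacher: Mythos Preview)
Your existence step and your test function for $E(U_\gamma)<0$ are essentially the paper's. For saturation and compact support, however, the paper takes a much more direct route that bypasses Hopf and Pohozaev entirely. For \eqref{Eq:AsymptoticProblemStatement0+} it dilates the minimizer itself: with $\psi_\rho(x)=U_\gamma(x/\rho)$ one has $E(\psi_\rho)=A_{U_\gamma}\rho^{N-2}+B_{U_\gamma}\rho^N$, and since $E(U_\gamma)<0$ forces $B_{U_\gamma}<0$, one computes $\frac{d}{d\rho}E(\psi_\rho)\big|_{\rho=1}=(N-2)E(U_\gamma)+2B_{U_\gamma}<0$; thus if $\int U_\gamma<\gamma$ one could dilate slightly to lower the energy while remaining in $K_\gamma$, contradicting minimality. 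No regularity is needed. For \eqref{Eq:AsymptoticProblemStatement1} the paper first establishes $\lambda_\gamma<0$ directly (from $E'(U_\gamma)(U_\gamma)=\lambda_\gamma\gamma$ together with $\frac{d}{dt}E(tU_\gamma)\big|_{t=1}\le0$, strict by the scaling derivative), then integrates the radial ODE $\frac{d}{dr}[r^{N-1}u_\gamma']=[-\lambda_\gamma+W'(u_\gamma)]r^{N-1}$ on a region where $W'(u_\gamma)\ge0$ to obtain $u_\gamma(r)\ge c_2-\tfrac{\lambda_\gamma}{2N}r^2\to+\infty$, contradicting the Strauss decay \eqref{Eq:StraussEstimates}. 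Your ``decay forces $\lambda_\gamma=0$'' is this same growth argument in disguise; your claim $\lambda_\gamma\ge0$ in the saturated case is the wrong sign (the paper proves $\lambda_\gamma<0$), though your argument does not actually use it.

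There are two concrete gaps in your version. First, the $L^\infty$-bound: radial monotonicity plus the mass constraint only give $U_\gamma(r)\le\gamma/(\omega_Nr^N)$ for $r>0$, with no control at the origin; for $N\ge3$ there are unbounded radially decreasing $H^1$ functions with finite mass. The paper obtains $0\le U_\gamma\le s_0$ from the barrier assumption \eqref{Eq:WeakUpperBarriers} via the a~priori estimate for minimizers (Remark~\ref{thm:rembastapocopiugrande}): truncating a minimizer at $s_0$ would lower $E$ while staying in $K_\gamma$. Without this bound you cannot put $f=\lambda_\gamma-W'(U_\gamma)$ into $L^\infty$ to invoke Theorem~\ref{Thm:FreeBoundaryRegularityForDerivative}, so your $C^{1,\alpha}$ step---and the Hopf contradiction it feeds---is unjustified. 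Second, your derivation of $\lambda_\gamma=0$ in the slack case by testing \eqref{Eq:AsymptoticEulerLagrangeInequality} with $v=U_\gamma-\phi$, $\phi\ge0$ supported in $\{U_\gamma=0\}$, fails because such $v$ violates the constraint $v\ge0$; you only get $\lambda_\gamma\le0$. The conclusion is recoverable (e.g.\ test with $(1\pm t)U_\gamma$ and use $E'(U_\gamma)(U_\gamma)=\lambda_\gamma\int U_\gamma$), but as written there is also a circularity: your Hopf branch of the saturation dichotomy presupposes the $C^{1,\alpha}$ regularity that you establish only afterward. The paper's ordering---scaling first, then $\lambda_\gamma<0$, then the $L^\infty$ bound, then regularity, then the ODE---avoids this.
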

\begin{remark}\label{Rem:BigVolumesDependenceFromPotential} Among other things, the above theorem says that $\gamma_0$ depends only on the restriction of $W$ to the interval $[0,s_0]$. This means that a constant $\gamma_0$ as above can be defined also problems ($P_\gamma$) with a potential $W$ that violates the subcritical growth condition \eqref{Eq:Potential0}. This observation will be useful later, see Remark~\ref{Rem:BigVolumesDependenceFromPotential0} below.
\end{remark}
\begin{proof} Without loss of generality we can minimize $E$ over $K_\gamma\cap H^1_\text{rad}(\mathds{R}^N)$. In fact, by Lemma~\ref{Lemma:EnergyDecreasingUnderScwartzSymmetrization}, $E(u^*)\le E(u)$, and $\int_{\mathds R^N} u^* dx=\int_{\mathds R^N}u\, dx$, where $u^*\in H^1_\text{rad}(\mathds{R}^N)$ is the symmetric decreasing rearrangement of $u$ (see Definition \ref{Def:SymmetricDecreasingRearrangement}). 

We will prove that a minimizing sequence $(u_j)$ in $H^1_\text{rad}(\mathds{R}^N)$ is bounded in $H^1(\mathds{R}^N)$. By assumptions \eqref{Eq:Potential}, \eqref{Eq:Potential0} and \eqref{Eq:Potential2} on the potential $W$, we have that $W(s)\ge-ks$ for some $k\in\mathds{R}^+$ and every $s\ge0$. Then, there exists $C\in\mathds{R}$ satisfying:
\begin{equation}
C\ge\int|\nabla u_j|^2+\int W(u_j)\ge\int|\nabla u_j|^2-k\gamma,
\end{equation}
and so:
\begin{equation}
\int|\nabla u_j|^2\le k\gamma+C.
\end{equation}
This says that $\left\vert\nabla u_n\right\vert$ is bounded in $L^2(\mathds{R}^N)$, hence by Sobolev's inequality $u_j$ is bounded in $L^{2^*}(\mathds{R}^N)$. On the other hand, $\int u_j=\gamma$ and $u_j\ge 0$, so $u_j$ is bounded in $L^1(\mathds{R}^N)$ too. Using interpolation, we have that $u_j$ is bounded in $L^2(\mathds{R}^N)$, and so $u_j$ is bounded in $H^1(\mathds{R}^N)$. Thus, up to subsequences, $u_j$ is weakly convergent to a function $U_\gamma\in H^1(\mathds{R}^N)$. The theorem of Strauss \cite{Strauss1977} (see also  \cite[Appendix A.1, Theorem~142]{BenciFortunatoBook}), asserts that for any $N\ge2$, one has a compact inclusion of $H^1_\text{rad}(\mathds{R}^N)$ into $L^p(\mathds{R}^N)$, for every $2<p<2^*=\frac{2N}{N-2}$ ($2^*=+\infty$, when $N=2$). 

Then, by a standard application of Nemytskii's theorem, we have that $u\mapsto W\circ u$ is a compact operator from $H^1_\text{rad}(\mathds{R}^N)$ to its topological dual $\left[H^1_\text{rad}(\mathds{R}^N)\right]'$. In particular, the functional $u\mapsto\int_{\mathds{R}^N} W\big(u(x)\big)\,\mathrm dx$ is weakly continuous. Moreover $u\mapsto\int_{\mathds{R}^N} \left\vert\nabla u\right\vert ^{2}\,\mathrm dx$ is weakly lower-semi-continuous. The direct method of the calculus of variations ensures the existence of a minimum $U_\gamma\in K_\gamma$, since $K_\gamma$ is convex and strongly closed, so \emph{a fortiori} it is also weakly closed. 
 This proves the first assertion of the theorem. \medskip
 
In order to prove \eqref{Eq:AsymptoticProblemStatement0}, let us consider a nonnegative function $\varphi\in H^1_\text{rad}(\mathds{R}^N)$, such that $\int_{\mathds R^N}\varphi=1$, and such that 
\begin{equation}\label{Eq:BigVolumesDependenceFromPotential}
B_\varphi:=\int_{\mathds R^N} W(\varphi)\,\mathrm dx<0.
\end{equation} 
The existence of such a function $\varphi$ follows from assumption \eqref{Eq:Potential2}. Set $A_\varphi=\int_{\mathds{R}^N}\frac{1}{2}|\nabla\varphi|^2\,\mathrm dx>0$. Next, let us set $\varphi_\rho(x)=\varphi\left({x}/{\rho}\right)$. Recalling the definition \eqref{eq:defKgamma}, it is easy to check that
\begin{equation}
\varphi_\rho\in H^1_\text{rad}(\mathds{R}^N)\cap K_{\rho^N},
\end{equation}
and that
\begin{equation}\label{Eq:AsymptoticProblemProof}
E(\varphi_\rho)=A_\varphi\rho^{N-2}+B_\varphi\rho^N.
\end{equation} 
So, for every $\rho\ge\rho_0=\rho_0(n, W):=\sqrt{-\frac{A_\varphi}{B_{\varphi}}}=>0$, we have $E(\varphi_\rho)<0$. 

Set $\gamma_0=\gamma_0(N,W)=\rho_0^N>0$ (observe that actually with the right choice of $\varphi$ in \eqref{Eq:BigVolumesDependenceFromPotential} we have $\gamma_0=\gamma_0(N,W_{|[0,s_0]}$)); then for every $\gamma\ge\gamma_0$, the following inequalities hold
\begin{equation}\label{Eq:AsymptoticProblemProof0-}
E(U_\gamma)\le E(\varphi_{\rho^N})<0.
\end{equation} 
This proves \eqref{Eq:AsymptoticProblemStatement0}.  

In order to prove \eqref{Eq:AsymptoticProblemStatement0+}, we
define $\psi_\rho=U_\gamma({x}/{\rho})$ and we set \[A_{U_\gamma}=\tfrac12\int_{\mathds{R}^N}|\nabla U_\gamma|^2\,\mathrm dx>0,\quad\text{and}\quad B_{U_\gamma}=\int_{\mathds R^N} W(U_\gamma)\,\mathrm dx.\] Then we have 
\begin{equation}\label{Eq:AsymptoticProblemProof-+-}
E(\psi_\rho)=A_{U_\gamma}\rho^{N-2}+B_{U_\gamma}\rho^N.
\end{equation}
From \eqref{Eq:AsymptoticProblemProof-+-} we obtain $\left[E(\psi_\rho)\right]\big\vert_{\rho=1}=A_{U_\gamma}+B_{U_\gamma}=E(U_\gamma)<0$ and so 
\begin{equation}\label{Eq:AsymptoticProblemProof-+-+}
B_{U_\gamma}<0.
\end{equation} 
Using \eqref{Eq:AsymptoticProblemProof0-} and \eqref{Eq:AsymptoticProblemProof-+-+}, we then get:
\begin{eqnarray}\label{Eq:Phozaev}\nonumber
\frac{\mathrm d}{\mathrm d\rho}\left[E\left(\psi_\rho\right)\right]_{|\rho=1} & = & \left[(N-2)\rho^{N-3}A_{U_\gamma}+NB_{U_\gamma}\rho^{N-1}\right]_{\rho=1}\\ 
& = & (N-2)A_{U_\gamma}+NB_{U_\gamma}\\ \nonumber
& = & (N-2)(A_{U_\gamma}+B_{U_\gamma})+2B_{U_\gamma}\\\nonumber 
 & = & (N-2)E(U_\gamma)+2B_{U_\gamma}<0.
\end{eqnarray}
From last inequality it follows that $\int U_\gamma=\gamma$, otherwise for some $\rho>1$ we would have $\int\psi_\rho=\gamma$ and $E(\psi_\rho)<E(U_\gamma)$, this facts being in contradiction with the fact that $U_\gamma$ is an absolute minimum in $K_\gamma$. Thus the proof of \eqref{Eq:AsymptoticProblemStatement0+} is accomplished.
 
It remains to prove \eqref{Eq:AsymptoticProblemStatement1}. First, let us observe that the support of $U_\gamma$ is either $\mathds{R}^N$ or a ball of finite radius centered at the origin. Namely, $U^*_\gamma=U_\gamma$ where $U^*_\gamma$ is the Schwartz's symmetrization, and therefore $U_\gamma$ is radially symmetric.  By our assumptions $U_\gamma$ satisfies equation \eqref{Eq:WeakFormulationOnThePositivitySet} in the weak sense of $W^{1,2}$ in $\Gamma$ (see \cite[p.\ 43]{KinderlehrerStampacchia}), where $\Gamma$ is the positivity set of $U_\gamma$, see \eqref{eq:defGamma}. By standard elliptic regularity results (e.g. \cite[Theorem~9.19]{GilbargTrudinger}), $U_\gamma$ is of class $C^2$ on $\Gamma$.
Moreover, the stationarity condition for $U_\gamma$ yields:
\begin{equation}\label{Eq:AsymptoticEulerLagrangePositivity}
E'(U_\gamma)(U_\gamma)=\int\left[\langle\nabla U_\gamma,\nabla U_\gamma)\rangle+W'(U_\gamma)U_\gamma)\right]\,\mathrm =\lambda_\gamma\int U_\gamma\,\mathrm =\lambda_\gamma\gamma,
\end{equation} 
for some $\lambda_\gamma\in\mathds R$.
But $E'(U_\gamma)(U_\gamma)=\frac{\mathrm d}{\mathrm dt}\left[E(tU_\gamma)\right]_{|t=1}\le0$, since $U_\gamma$ is a minimum in $K_\gamma$. 
Combining \eqref{Eq:Phozaev} with \eqref{Eq:AsymptoticEulerLagrangePositivity} we get readily 
\begin{equation}\label{Eq:AsymptoticProblemProof0-+}
\lambda_\gamma\le0.
\end{equation}
It is easy to show that inequality \eqref{Eq:AsymptoticProblemProof0-+} is strict, for otherwise, using \eqref{Eq:WeakFormulationOnThePositivitySet} we would have $E'(U_\gamma)=0$, which  contradicts \eqref{Eq:Phozaev}.

Using standard \emph{a priori} estimates (see Proposition \ref{thm:bounds}), it is easy to show that $U_\gamma$ is bounded from above,\footnote{For this conclusion, it suffices to assume that $W(s)>W(s_0)$ for $s$ in a right neighbrohood of $s_0$, see Remark~\ref{thm:rembastapocopiugrande}.} namely, $0\le U_\gamma\le s_0$.  Therefore $U_\gamma\in L^{\infty}(\Gamma)$, and the $L^\infty$-norm is bounded uniformly with respect to $\gamma$. 

By standard elliptic regularity (see for instance \cite[Theorem~9.19]{GilbargTrudinger})  $U_\gamma$ is in $C^{2,\alpha}_{\textrm{loc}}(\Gamma)$ for every $\alpha$. 
To deduce that $U_\gamma\in C^{1,\alpha}(\mathds R^N)$ we  apply Theorem \ref{Thm:FreeBoundaryRegularityForDerivative} with $\Omega$ 
equal to $B_{\mathds R^n}(0, R_\gamma+1)$ (for instance) and $f=-W'(U_\gamma)+\lambda_\gamma$, $\psi_1\equiv0$, $\psi_2=\Vert U_\gamma\Vert_{\infty,\mathds R^N}+1\in\left]0,+\infty\right[$. So 
$f\in L^\infty(\mathds R^N)$ and for every $0<\alpha<1$, $\psi_1,\psi_2\in C^{1,\alpha}(\overline{B_{\mathds R^n}(0, R_\gamma+1)})$ 
since $\psi_1,\psi_2$ are constants, then $U_\gamma\in C^{1,\alpha}_{\text{loc}}(\mathds R^N)$, for all $\alpha\in\left[0,1\right[$. We recall from a result contained in 
\cite{Strauss1977} (see also \cite[Lemma~141]{BenciFortunatoBook}) that, since $U_\gamma\in H^1_\text{rad}(\mathds{R}^N)$ the following estimate due to Strauss holds 
\begin{equation}\label{Eq:StraussEstimates}
\big|U_\gamma(x)\big|\le C\,\frac{\Vert U_\gamma\Vert_{H^1(\mathds{R}^N)}}{|x|^{\frac{N-1}{2}}},\quad \text{ for a.e. }x\in\mathds{R}^N,
\end{equation} 
for some positive constant $C$.
Now we argue indirectly and we assume that the support of $U_\gamma$ is $\mathds{R}^N$, i.e., that $\Gamma=\mathds{R}^N$, to obtain a contradiction.
Fix $a\in\left]0,+\infty\right[$ small enough so that $W'(s)\ge ks$ for every $s\in\left]0,a\right]$. This choice is always possible by \eqref{Eq:Potential}. From \eqref{Eq:StraussEstimates} we deduce the existence of $r_0>0$ such that $U_\gamma(x)\le a$ if $|x|\ge r_0$, and then $W\big(U_\gamma(x)\big)\ge0$.
Since $U_\gamma$ is radially symmetric, we can write $U_\gamma(x)=u_\gamma(|x|)$, where $u_\gamma\colon\left[0,+\infty\right[\to\mathds{R}$. Recalling that $U_\gamma\in C^{2,\alpha}(\Omega')$ for every $\Omega'\Subset\Gamma$ equation \eqref{Eq:AsymptoticEulerLagrangeEquality} below 
\begin{equation}\label{Eq:AsymptoticEulerLagrangeEquality}
-\Delta U_\gamma+W'(U_\gamma)=\lambda_\gamma,  
\end{equation}is satisfied in the classical sense, and it gives the following ordinary differential equation for $u_\gamma$:
\begin{equation}\label{Eq:ThmAsymptoticProblemEDO}
\frac{\mathrm d}{\mathrm dr}\left[r^{N-1}u_\gamma'(r)\right]=\left[-\lambda_\gamma+W'\big(u_\gamma(r)\big)\right]r^{N-1},\quad\forall\, r\in\left[0,+\infty\right[.
\end{equation}
Integrating \eqref{Eq:ThmAsymptoticProblemEDO} on the interval $[r_0,r]$ we get 
\begin{eqnarray*}
r^{N-1}u_\gamma'(r)-r_0^{N-1}u_\gamma'(r_0) & = & -\frac{\lambda_\gamma}{N}(r^N-r_0^N)+\int_{r_0}^{r}W'(u_\gamma(s))s^{N-1}ds\\
 & \ge & -\frac{\lambda_\gamma}{N}(r^N-r_0^N)\ge-\frac{\lambda_\gamma}{N}\cdot r^N+c_0,
\end{eqnarray*}
where $c_0\in\mathds{R}$ is a constant independent of $r$. From the last inequality we see that
\begin{equation}
u_\gamma'(r)\ge-\frac{\lambda_\gamma}{N}r+c_1,
\end{equation}
where $c_1\in\mathds{R}$ is independent of $r$.
Integrating again we get 
\begin{equation}
u_\gamma(r)\ge c_2-\frac{\lambda_\gamma}{2N}r^2,
\end{equation} 
with $c_2\in\mathds{R}$ independent of $r$.
Exploiting the fact that $\lambda_\gamma<0$, the above equation contradicts the Strauss's decay estimates \eqref{Eq:StraussEstimates}. This contradictions shows that $\Gamma=B_{\mathds{R}^N}(0,R_\gamma)$, and this concludes the proof. 
\end{proof}
\subsection{Asymptotics for the radius $R_\gamma$}\label{sub:asymptotics}
We need to show that $R_\gamma\cong\gamma^\frac1N$ as $\gamma\to+\infty$. More precisely:
\begin{theorem}\label{Thm:AsymptoticEquivalentOfTheRadius}
In the notations of Theorem~\ref{Thm:AsymptoticProblemStatement}, 
there exist positive constants $\widetilde\gamma_0=\widetilde\gamma_0\left(N, \gamma_0, \Vert W\vert_{[0,s_0]}\Vert_\infty\right)\ge\gamma_0$, $C^-=C^-({W,N})$, and $C^+=C^+({W,N})$  such that the following inequalities hold:
\begin{equation}\label{Eq:AsymptoticProblemStatementStructuralConditionToMakePhotographyWorks}
C^-\,\gamma^{\frac1N}\le R_\gamma<C^+\,\gamma^{\frac1N},
\end{equation} 
for all $\gamma>\widetilde\gamma_0$.
\end{theorem}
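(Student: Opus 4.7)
\emph{Plan of proof.} The lower bound is immediate from the pointwise estimate $U_\gamma\le s_0$ established in the proof of Theorem~\ref{Thm:AsymptoticProblemStatement}, combined with $\mathrm{supp}(U_\gamma)=\overline{B(0,R_\gamma)}$ and $\int U_\gamma=\gamma$: indeed,
\[
\gamma=\int_{\mathds R^N} U_\gamma\,\mathrm dx\le s_0\,\omega_N\,R_\gamma^N,
\]
which yields $R_\gamma\ge(s_0\omega_N)^{-1/N}\gamma^{1/N}$. We may take $C^-:=(s_0\omega_N)^{-1/N}$.

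For the upper bound the strategy is to control the width of the outer ``transition layer'' of the radial profile. First, I would produce a uniform negative bound on the Lagrange multiplier. Using the scaling competitor $\varphi_\rho$ with $\rho=\gamma^{1/N}$, from \eqref{Eq:AsymptoticProblemProof} one has $E(U_\gamma)\le A_\varphi\gamma^{(N-2)/N}+B_\varphi\gamma$ with $B_\varphi<0$. Since $U_\gamma\in C^{1,\alpha}(\mathds R^N)\cap C^2(B(0,R_\gamma))$ and both $u_\gamma$ and $\nabla u_\gamma$ vanish on $\partial B(0,R_\gamma)$ (this is the payoff of Theorem~\ref{Thm:FreeBoundaryRegularityForDerivative}, as already used in Theorem~\ref{Thm:AsymptoticProblemStatement}), the boundary terms in the classical Pohozaev computation vanish, yielding
\[
\tfrac{N-2}{2N}\int|\nabla u_\gamma|^2+\int W(u_\gamma)=\lambda_\gamma\gamma.
\]
Subtracting this from $E(U_\gamma)=\tfrac12\int|\nabla u_\gamma|^2+\int W(u_\gamma)$ gives $\lambda_\gamma\gamma=E(U_\gamma)-\tfrac{1}{N}\int|\nabla u_\gamma|^2\le E(U_\gamma)$, and hence $\lambda_\gamma\le B_\varphi/2<0$ for all sufficiently large $\gamma$. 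Set $c_0:=|B_\varphi|/2$; then $-\lambda_\gamma\ge c_0$ uniformly.

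The second ingredient is the first integral $H(r):=\tfrac12(u_\gamma'(r))^2-W(u_\gamma(r))+\lambda_\gamma u_\gamma(r)$. Multiplying the radial Euler-Lagrange equation $u_\gamma''+\tfrac{N-1}{r}u_\gamma'=W'(u_\gamma)-\lambda_\gamma$ by $u_\gamma'$ shows $H'(r)=-\tfrac{N-1}{r}(u_\gamma')^2\le 0$ on $(0,R_\gamma]$; combined with $H(R_\gamma)=0$ (again using the boundary vanishing), one obtains $H\ge 0$ on $[0,R_\gamma]$. Let $s_*>0$ denote the smallest zero of $W$ (so $W\ge 0$ on $[0,s_*]$) and fix $\alpha:=s_*/2$. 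A simple energy comparison shows that $u_\gamma(0)>s_*$ for $\gamma\ge\gamma_0$ (otherwise $u_\gamma\le s_*$ throughout the support would force $W(u_\gamma)\ge 0$ and hence $E(U_\gamma)\ge 0$, contradicting \eqref{Eq:AsymptoticProblemStatement0}); since $u_\gamma$ is radially non-increasing, there exists $r_\alpha\in(0,R_\gamma)$ with $u_\gamma(r_\alpha)=\alpha$. On $[r_\alpha,R_\gamma]$ we have $W(u_\gamma)\ge 0$ and $-\lambda_\gamma u_\gamma\ge c_0 u_\gamma$, so $H\ge 0$ rewrites as $(u_\gamma')^2\ge 2c_0 u_\gamma$. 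Integrating $-u_\gamma'/\sqrt{u_\gamma}\ge\sqrt{2c_0}$ from $r_\alpha$ to $R_\gamma$ yields the bounded transition width
\[
R_\gamma-r_\alpha\le\sqrt{2\alpha/c_0}.
\]

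Finally, since $u_\gamma\ge\alpha$ on $B(0,r_\alpha)$, the volume constraint gives $\gamma\ge\alpha\omega_N r_\alpha^N$, i.e., $r_\alpha\le(\gamma/(\alpha\omega_N))^{1/N}$. Combining with the previous estimate,
\[
R_\gamma\le\bigl(\gamma/(\alpha\omega_N)\bigr)^{1/N}+\sqrt{2\alpha/c_0},
\]
so that for $\gamma\ge\widetilde\gamma_0$ (large enough, depending on $N$, $\gamma_0$, and $W|_{[0,s_0]}$ through $c_0$) the first term dominates, producing $R_\gamma\le C^+\gamma^{1/N}$ with, e.g., $C^+:=2(\alpha\omega_N)^{-1/N}$. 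The chief delicate point is the validity of the Pohozaev identity at the free boundary of $\mathrm{supp}(U_\gamma)$, which rests on the $C^{1,\alpha}$-regularity supplied by Theorem~\ref{Thm:FreeBoundaryRegularityForDerivative} and in turn on the careful treatment of the volume-constrained obstacle problem developed in Section~\ref{sub:regularityconstr}.
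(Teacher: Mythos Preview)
Your argument is correct and follows the same overall architecture as the paper's proof: the lower bound comes from $U_\gamma\le s_0$ and the volume constraint; the upper bound is obtained by (i) bounding $E(U_\gamma)$ above via a competitor, (ii) using the Pohozaev identity together with $u_\gamma'(R_\gamma)=0$ (supplied by Theorem~\ref{Thm:FreeBoundaryRegularityForDerivative}) to get a uniform negative bound on $\lambda_\gamma$, (iii) controlling the width of an outer transition shell, and (iv) bounding the inner radius by the volume constraint.

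The genuine difference is in step~(iii). The paper fixes the level $a=\sup\{t:W'\ge 0\text{ on }[0,t]\}$ (the first positive zero of $W'$), observes that on the shell $\{u_\gamma\le a\}$ the radial ODE gives $u_\gamma''\ge -\lambda_\gamma\ge w^*$, and then uses a second-order Taylor expansion of $u_\gamma$ at $R_\gamma$ to bound $R_\gamma-\widetilde R_\gamma$. You instead use the radial first integral $H(r)=\tfrac12(u_\gamma')^2-W(u_\gamma)+\lambda_\gamma u_\gamma$, note that $H'\le 0$ and $H(R_\gamma)=0$, and on the shell $\{u_\gamma\le s_*/2\}$ (where $W\ge 0$) deduce $(u_\gamma')^2\ge 2c_0\,u_\gamma$, which integrates directly. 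Both tricks are standard; your Hamiltonian argument is arguably a bit cleaner and avoids the Taylor remainder, while the paper's approach yields the slightly sharper explicit constant $C^+=\tfrac32(s_1\omega_N)^{-1/N}$ stated in \eqref{eq:explC+}. The choices of competitor (you reuse $\varphi_\rho$ from \eqref{Eq:AsymptoticProblemProof}; the paper builds a new piecewise-affine $v_\gamma$) and of threshold level ($s_*/2$ versus $s_1$) are cosmetic differences that do not affect the substance.
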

\begin{remark} 
The constants $C^+$ and $C^-$ in \eqref{Eq:AsymptoticProblemStatementStructuralConditionToMakePhotographyWorks}
can be estimated as follows:
\begin{equation}\label{eq:explC+}
C^+=\frac32\left(\frac1{s_1\,\omega_N}\right)^{\frac1N},
\end{equation} where  $s_1>0$ is the first positive zero of $W'$:
\begin{equation}\label{eq:defs1}
s_1=\min\big\{s>0:W'(s)=0\big\},
\end{equation}
and 
\begin{equation}\label{eq:explC-}
C^-({W,N})=\left(\frac1{s_0\,\omega_N}\right)^{\frac1N}.
\end{equation}
By our assumptions, $s_1<s_0$, and therefore $C^-<C^+$.
\end{remark}
\begin{proof}
Since $U_\gamma=U^*_\gamma$, from the definition of symmetric decreasing rearrangement that $U_\gamma$ is nonincreasing. From this it follows that $U_\gamma(0)=\Vert U_\gamma\Vert_{\infty}\le s_0$ which implies 
\begin{equation}\label{Eq:RadiusLowerBound}
s_0\omega_NR_\gamma^N\ge U_\gamma(0)\omega_NR_\gamma^N\ge\gamma,
\end{equation}
from which the first inequality in \eqref{Eq:AsymptoticProblemStatementStructuralConditionToMakePhotographyWorks} follows readily for every $\gamma\ge\gamma_0$, with $C^-$ given by \eqref{eq:explC-}.\medskip

Establishing the second inequality in \eqref{Eq:AsymptoticProblemStatementStructuralConditionToMakePhotographyWorks}
requires a much more involved argument, which will take the remainder of this section. 
Towards this goal, let us observe that, since $0$ is a local maximum of $U_\gamma$, $\Delta U_\gamma(0)\le0$, and by \eqref{Eq:AsymptoticEulerLagrangeEquality} $\Delta U_\gamma(0)=-\lambda_\gamma+W'\big(U_\gamma(0)\big)$ with $-\lambda_\gamma>0$, and so $W'\big(U_\gamma(0)\big)<0$, which implies $U_\gamma(0)>s_1>0$, where $s_1$ is given in \eqref{eq:defs1}. 
Set
\[
a=\sup \left\{ t\ |\ W^{\prime }(s)\geq 0, \forall s\in [0,t]\right\}
\]
and
\[
\widetilde{R}_\gamma=\inf \big\{ \vert x\vert: U_\gamma(x)\leq a\big\};
\]
clearly:
\begin{equation}\label{Eq:AsymptoticProblemStatementFirstRadiusEstimate}
s_1\omega_N\widetilde R^N_{\gamma}<\gamma.
\end{equation}
We now want to estimates the real number $z=R_\gamma-\widetilde R_\gamma$. Using elementary Taylor expansion we get:
\begin{eqnarray*}
a =u_\gamma(\widetilde{R}_\gamma)&=&u_\gamma(R_\gamma)+u_\gamma^{\prime }(R_\gamma)\left( \widetilde{R}_\gamma-R_\gamma\right) +
\frac{1}{2}u_\gamma^{\prime \prime }(\theta )\left( \widetilde{R}_\gamma-R_\gamma\right) ^{2}
\label{palla} \\
&=&u_\gamma^{\prime }(R_\gamma)\left( \widetilde{R}_\gamma-R_\gamma\right) +\frac{1}{2}u_\gamma^{\prime \prime
}(\theta )\left( \widetilde{R}_\gamma-R_\gamma\right) ^{2}
\end{eqnarray*}
for some $\theta \in \left]\smash{\widetilde{R}_\gamma,R_\gamma}\right[$.
Our equation \eqref{Eq:ThmAsymptoticProblemEDO} becomes
\[
\frac{\mathrm d}{\mathrm dr}\left[ r^{N-1}u_\gamma^{\prime }(r)\right] =\left[
-\lambda_\gamma +W^{\prime }(u_\gamma(r))\right] r^{N-1},
\]
i.e., 
\[
u_\gamma^{\prime \prime }(r)+\frac{N-1}{r}u_\gamma^{\prime }(r)=-\lambda_\gamma +W^{\prime
}(u_\gamma(r))>-\lambda_\gamma>0,
\]
whenever $r\in \left]\smash{\widetilde{R}_\gamma,R_\gamma}\right[$.
Since $U_\gamma^{\prime }(r)\leq 0$  for $r\in \left]\smash{\widetilde{R}_\gamma},R_\gamma\right[$ (this is a property of symmetric rearrangements), it follows that
\begin{equation}\label{Eq:SecondDerivativeLowerBound}
U_\gamma^{\prime \prime }(r)>-\lambda_\gamma,\quad \forall\, r\in \left]\smash{\widetilde{R}_\gamma,R_\gamma}\right[. 
\end{equation}
We need to give an estimate for a positive lower bound
$\lambda_\gamma \geq w_\gamma^{+}>0$. Towards this goal, we consider the following comparison function $v_\gamma(x)=\tilde v_\gamma(|x|)$, where $\tilde v_\gamma\colon\left[0,+\infty\right[\to\mathds R$ is the piecewise affine function defined by:
\[\tilde v_\gamma(r)=\begin{cases}s_0,&\text{if $r\in[0,t_0]$;}\\
s_0-s_0(r-t_0),&\text{for $r\in[t_0,t_0+1]$};\\0,&\text{if $r>t_0+1$},\end{cases}\]
with the constant $t_0>0$ suitably defined.
It is easy to check that $v_\gamma\in W^{1,2}_0(\mathds{R}^N)$, that we can choose $0<t_0=t_0(\gamma)= c_1\gamma^{\frac1N}$, with $c_1=(\frac{4\omega_Ns_0}3)^{-\frac1N}$, so that 
\begin{equation}\label{Eq:VolumeOfAnsatz}
\tfrac12\gamma\le\int_{\mathds R^N} v_\gamma\,\mathrm dx=\omega_Ns_0t_0^N\left(1+\tfrac12\big[(1+\tfrac1{t_0})^N-1\big]\right)\le\gamma,
\end{equation} 
for large $\gamma$, and $v_\gamma\ge0$. Since $U_\gamma$ is a minimizer for  Problem ($\mathrm P_\gamma$), we have $E[U_\gamma]\le E[v_\gamma]$. On the other hand, an explicit computation of $E[v_\gamma]$ gives: 
\begin{eqnarray}\nonumber
\frac1{\alpha_{N-1}}E[v_\gamma] & =  & \frac12\int_{t_0}^{t_0+1}
\tilde v'_\gamma(r)^2r^{N-1}\,\mathrm dr+\int_0^{t_0} W\big(\tilde 
v_\gamma(r)\big)r^{N-1}\,\mathrm dr\\
 \label{Eq:EnergyOfAnsatz}
&& + \int_{t_0}^{t_0+1}W\big(\tilde v_\gamma(r)\big)r^{N-1}\,\mathrm dr\\  \nonumber
& \le & \left(\tfrac{s_0^2}{2N}+\hat{w}\right)t_0^N\big[(1+\tfrac1{t_0})^N-1\big]-\tfrac mNt_0^N,
\end{eqnarray}
where $\hat{w}:=||W||_{\infty,[0,s_0]}$ (recall that $\alpha_{N-1}$ denotes the area of the unit ball in $\mathds R^N$). We denote by $\bar{E}^*_\gamma$ the last term of the above inequality, and se set $E^*_\gamma:=\alpha_{N-1}\bar{E}^*_\gamma$, so that:
\begin{equation}\label{eq:stimaEUgamma}
 E(U_\gamma)\le E(v_\gamma)\le E_\gamma^*.
 \end{equation}
By \eqref{Eq:VolumeOfAnsatz} , one has $E^*_\gamma<0$ and, for $\gamma$ sufficiently large, $-E^*_\gamma\sim c_2\gamma$ for some positive constant $c_2$.

We now use the classical Pohozaev identity in the bounded starshaped domain $B_{\mathds{R}^N}(0, R_\gamma)$ to obtain:
\begin{equation}\label{Eq:PohozaevDimension}
\begin{aligned}&\int_{B_{\mathds{R}^N}(0, R_\gamma)}-\lambda_\gamma U_\gamma+W(U_\gamma) \\&= \left(\tfrac1N-\tfrac12\right)\int_{B_{\mathds{R}^N}(0, R_\gamma)}\|\nabla U_\gamma\|^2
 -  \tfrac12\int_{\partial B_{\mathds{R}^N}(0, R_\gamma)}(x\cdot\nu)\left(\tfrac{\partial U_\gamma}{\partial\nu}\right)^2,
 \end{aligned}
\end{equation}
where $\nu$ is the outward pointing normal unit field to the boundary of the ball.
On the other hand by \eqref{Eq:AsymptoticProblemProof0-} and \eqref{eq:stimaEUgamma}, we get
\begin{equation}
E[U_\gamma]=-\int [W(U_\gamma)]^-+\int [W(U_\gamma)]^++\frac12\int |\nabla U_\gamma|^2<E^*_\gamma<0.
\end{equation}
Combining the last two equations leads to
\begin{equation}
\begin{aligned}\label{Eq:LagrangeMultiplierLowerBound}
-\lambda_\gamma\gamma & =  \int[W(U_\gamma)]^--\int[W(U_\gamma)]^+\\&+\left(\tfrac1N-\tfrac12\right)\int_{B_{\mathds{R}^N}(0, R_\gamma)}\|\nabla U_\gamma\|^2 -  \tfrac12\alpha_{N-1}R_\gamma^N(u'_\gamma(R_\gamma))^2\\ 
& \ge  \tfrac1N\int_{B_{\mathds{R}^N}(0, R_\gamma)}\|\nabla U_\gamma\|^2-E^*_\gamma-\tfrac12\alpha_{N-1}R_\gamma^N(u'_\gamma(R_\gamma))^2\\
& \ge  -E^*_\gamma-\tfrac12\alpha_{N-1}R_\gamma^N(U'_\gamma(R_\gamma))^2.
\end{aligned}
\end{equation}
Now, we claim that :
\begin{equation}\label{Eq:C1ContactWithObstacle}
U'_\gamma(R_\gamma)=0,
\end{equation} for every $\gamma\ge\gamma_0$.
This follows easily from the $C^{1,\alpha}$-regularity of $U_\gamma$, keeping im mind that $U_\gamma(r)=0$ for $r>R_\gamma$.  Combining \eqref{Eq:LagrangeMultiplierLowerBound} with \eqref{Eq:C1ContactWithObstacle}, and taking $\gamma$ large enough we get 
\begin{equation}
-\lambda_\gamma\ge w_\gamma^*:=-\frac{E^*_\gamma}{\gamma}>0,
\end{equation}
where $w^*>0$ is a positive constant that could be chosen equal to $\frac{m\omega_N}{2}=w^*>0$.
Further, since $U_\gamma^{\prime }(R_\gamma)\leq 0$ and $\left( \widetilde{R}_\gamma-R\right) <0$, we obtain 
\begin{eqnarray*}
a &=&U_\gamma(R_\gamma)+U_\gamma^{\prime }(R)\left( \widetilde{R}_\gamma-R_\gamma\right) +\frac{1}{2}u^{\prime
\prime }(\theta )\left( \widetilde{R}_\gamma-R_\gamma\right) ^{2} \\
&\geq &\frac{1}{2}w^*\left( \widetilde{R}_\gamma-R_\gamma\right) ^{2},
\end{eqnarray*}
from which we deduce 
\[
R_\gamma-\widetilde{R}_\gamma\leq \sqrt{\frac{2a}{w_\gamma^*}},
\]
i.e., 
\begin{equation}\label{Eq:FinalEstimate}
R_\gamma\leq \widetilde{R}_\gamma+\sqrt{\frac{2a}{w^*_\gamma}}\leq k\gamma ^{1/N}+\sqrt{\frac{2a%
}{w^*_\gamma}}.
\end{equation}
The second inequality here follows easily by the estimate of $\widetilde R_\gamma$ given below
\begin{eqnarray*}
\gamma=\int_0^{R_\gamma}U_\gamma(r)\alpha_{N-1}r^{N-1}dr & \ge & \int_0^{\widetilde R_\gamma}U_\gamma(r)\alpha_{N-1}r^{N-1}dr\\
& \ge & a\int_0^{\widetilde R_\gamma}\alpha_{N-1}r^{N-1}dr= a\omega_N\widetilde R_\gamma^N. 
\end{eqnarray*}
Thus 
\begin{equation}
\widetilde R_\gamma\le\left(\frac{\gamma}{a\omega_N}\right)^{\frac1N}=k_{N,W}\gamma^{\frac1N},
\end{equation}
with this last equation we justify \eqref{Eq:FinalEstimate} and indeed we accomplish the proof of the theorem. 
\end{proof}
\begin{remark}\label{Rem:BigVolumesDependenceFromPotential0} 
Using the observation in Remark~\ref{Rem:BigVolumesDependenceFromPotential}, and the inequalities \eqref{Eq:VolumeOfAnsatz}, \eqref{Eq:EnergyOfAnsatz}, it is easy to see that $\widetilde\gamma_0$ depends only on $W\vert_{[0,s_0]}$, which means that $\widetilde\gamma_0$ can be defined also for potentials $W$ that violate the subcritical growth condition \eqref{Eq:Potential0}. This is an important observation in view of a multiplicity result without the assumption of the subcritical growth condition \eqref{Eq:Potential0}.
\end{remark}

\section{Proof of the main results}\label{sec:proofaddhp} 
Consider the open sets:
\begin{equation}
\begin{aligned}\label{eq:defOmegar}
&\Omega^+_r=\left\{x\in\mathds{R}^N:\mathrm{dist}(x,\Omega)<r\right\},&\\
&\Omega^-_r=\big\{x\in\Omega:\mathrm{dist}(x,\partial\Omega)>r\big\},&
\end{aligned}
\end{equation} 
where $\mathrm{dist}$ is the usual Euclidean distance of $\mathds{R}^N$. 
Let $r>0$ be small enough such that both $\Omega^+_r$ are $\Omega^-_r$ are homotopically equivalent to $\Omega$ via some suitable maps \[f_0\colon\Omega\longrightarrow\Omega^-_r\quad\text{and}\quad g_0\colon\Omega^+_r\longrightarrow\Omega.\]
The existence of such $r>0$ and the homotopy equivalences $f_0$, $g_0$, follows from the assumption that $\partial\Omega$ is Lipschitz.
\smallskip

\noindent
A proof of our results is obtained by applying Theorems~\ref{Thm:FotoLS} and \ref{Thm:FotoMorse} to the following setup, 
recalling the constants $\widetilde\gamma_0$ (Theorem~\ref{Thm:AsymptoticProblemStatement}), and $C^+$ (Theorem \ref{Thm:AsymptoticEquivalentOfTheRadius}):
\begin{itemize}
\item $\mathfrak{M}=\mathfrak{M}^{V}$, where
\begin{equation} 
\label{eq:defMV}
\mathfrak{M}^{V}=\left\{ u\in H_{0}^{1}(\Omega ):\int_{\Omega
}u(x)\,\mathrm dx=V\right\}, 
\end{equation}
with 
\begin{equation}\label{Eq:b}
V\le V_1:=\min\left\{\left(\frac r{{C^+}}\right)^N,\left(\frac{r}{R_{\widetilde\gamma_0}}\right)^N\widetilde\gamma_0\right\}=\left(\frac r{{C^+}}\right)^N,
\end{equation}
\item $J={E_{\varepsilon}}\big\vert_{{\mathfrak{M}^V}}$, where 
\begin{equation*}
E_{\varepsilon}(u)=\frac{\varepsilon^{2}}{2}\int_{\Omega} \left\vert
\nabla u\right\vert ^{2}\,\mathrm dx+\int_{\Omega} W\big(u(x)\big)\,\mathrm dx,
\end{equation*}
with 
\begin{equation}\label{Eq:EpsilonUpperBoundInFunctionOfVolume}
\varepsilon\le\varepsilon_1(V):=\left(\frac{V}{\widetilde\gamma_0}\right)^\frac1N;
\end{equation}
\item $X=\Omega$;
\item $f=\Phi_{\varepsilon,V}\circ f_0\colon\Omega\to E_{\varepsilon}^c\cap\mathfrak{M}^{V}$, where $c=\varepsilon^NE\big(U_{V/\varepsilon^N}\big)$ (see \eqref{Eq:AsymptoticProblemStatement0}) and 
\[\Phi_{\varepsilon,V}\colon\Omega^-_r\longrightarrow E_{\varepsilon}^c\cap\mathfrak{M}^{V}\] is the map
$\Omega_r^-\ni x_0\longmapsto\Phi_{\varepsilon,V}^{x_0}\in E_{\varepsilon}^c\cap\mathfrak{M}^{V}$ defined by
\[\phantom{\qquad x\in\Omega.}\Phi_{\varepsilon,V}^{x_0}(x)=U_{V/\varepsilon^N}\left(\frac{x-x_0}{\varepsilon}\right),\qquad x\in\Omega.\]
\item $g=g_0\circ\beta:E_{\varepsilon}^c\cap\mathfrak{M}^{V}\to\Omega$, where $\beta:E_{\varepsilon}^c\cap\mathfrak{M}^{V}\to\Omega^+_r$ is the map defined as follows 
\begin{equation}\label{Eq:DefBarycenter}
\beta(u):=\frac{\int_\Omega x\big|u(x)\big|\,\mathrm dx}{\int_\Omega |u(x)|\,\mathrm dx}.
\end{equation}
Note that:
\begin{equation}\label{eq:barycenterUgamma}
\phantom{,\quad\forall\,\varepsilon,V,x_0.}
\beta\left(\Phi_{\varepsilon,V}^{x_0}\right)=x_0,\quad\forall\,\varepsilon,V,x_0.
\end{equation}
\end{itemize}
Let us now show that all the above objects are well defined, and that, using this framework, the assumptions of Theorems \ref{Thm:FotoLS} and \ref{Thm:FotoMorse} are satisfied. 
\begin{lemma}\label{Lemma:PS} 
For every $\varepsilon>$ and $V>0$, the functional ${E_{\varepsilon}}\big\vert_{\mathfrak{M}^{V}}$ satisfies the Palais-Smale condition.
\end{lemma}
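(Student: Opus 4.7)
The plan is to verify the three standard ingredients for a Palais--Smale argument on a constraint manifold: boundedness of any (PS) sequence, extraction of a weak limit that still lies in $\mathfrak M^V$, and a convexity-type identity that promotes weak convergence to strong convergence.

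First, given a (PS) sequence $(u_n)\subset\mathfrak M^V$ at level $c$, I would use the lower bound $W(s)\ge -m$ from \eqref{Eq:Potential2} (already recorded in \eqref{eq:Eeboundedbelow}) to deduce that $\tfrac{\varepsilon^2}{2}\int_\Omega|\nabla u_n|^2\,\mathrm dx\le E_\varepsilon(u_n)+m|\Omega|$ is bounded; Poincar\'e's inequality on the bounded Lipschitz domain $\Omega$ then gives an $H^1_0$-bound on $(u_n)$. Passing to a subsequence, I get a weak limit $u_n\rightharpoonup u$ in $H^1_0(\Omega)$, and the Rellich--Kondrachov theorem yields strong convergence $u_n\to u$ in every $L^q(\Omega)$ with $q<2^*$. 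In particular $\int_\Omega u\,\mathrm dx=V$, so $u\in\mathfrak M^V$.

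To upgrade weak to strong convergence I would exploit the convenient fact that $\mathfrak M^V$ is an \emph{affine} subspace of $H^1_0(\Omega)$, with constant tangent space $T=\{\varphi\in H^1_0(\Omega):\int_\Omega\varphi\,\mathrm dx=0\}$. Then $\varphi_n:=u_n-u$ belongs to $T$ (because $\int_\Omega\varphi_n\,\mathrm dx=V-V=0$) and $(\varphi_n)$ is bounded in $H^1_0(\Omega)$. The Palais--Smale hypothesis gives $\|E_\varepsilon'(u_n)|_T\|_{T^*}\to 0$, so $E_\varepsilon'(u_n)(\varphi_n)\to 0$, which reads
\[
\varepsilon^2\int_\Omega\nabla u_n\cdot\nabla(u_n-u)\,\mathrm dx+\int_\Omega W'(u_n)(u_n-u)\,\mathrm dx\longrightarrow 0.
\]
The subcritical growth assumption \eqref{Eq:Potential0} together with Rellich--Kondrachov ensures, via the continuity of the Nemytskii operator $v\mapsto W'\circ v$ from $L^p(\Omega)$ to $L^{p'}(\Omega)$ and H\"older's inequality, that $\int_\Omega W'(u_n)(u_n-u)\,\mathrm dx\to 0$. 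Combined with $\int_\Omega\nabla u\cdot\nabla(u_n-u)\,\mathrm dx\to 0$ from the weak convergence, I conclude $\|u_n-u\|_{H^1_0(\Omega)}\to 0$.

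The only slightly delicate point is that one must test with functions tangent to $\mathfrak M^V$ so that no Lagrange multiplier enters the identity. This happens here \emph{for free} because $u_n-u$ already has zero mean, and the constraint term drops out without any need to bound the Lagrange multipliers $\lambda_n$ associated with the (PS) sequence. Everything else is classical, and the subcritical growth hypothesis \eqref{Eq:Potential0} is used precisely at the compactness step on the $W'(u_n)$ term, which is the substantive (if standard) part of the argument.
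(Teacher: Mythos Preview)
Your argument is correct, and it takes a genuinely different route from the paper's proof. The paper writes the (PS) condition in the form
\[
-\varepsilon^{2}\Delta u_n+W'(u_n)=\lambda_n+T_n,\qquad T_n\to 0\ \text{in}\ H^{-1}(\Omega),
\]
then multiplies by $u_n$ and uses $\int_\Omega u_n=V$ to show that the Lagrange multipliers $\lambda_n$ are bounded; after extracting $\lambda_n\to\lambda$, it applies $(-\Delta)^{-1}$ to the right-hand side and invokes the compactness of the Nemytskii operator $u\mapsto W'(u)$ from $H^1_0(\Omega)$ to $H^{-1}(\Omega)$ to conclude that $(u_n)$ converges strongly. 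By contrast, you exploit the affine structure of $\mathfrak M^V$: since $u_n-u$ already has zero mean, it lies in the tangent space $T$, so testing with it makes the multiplier disappear and leads directly to $\|\nabla(u_n-u)\|_{L^2}\to0$. Your approach is slightly more economical in that it never needs to control $\lambda_n$; the paper's approach, on the other hand, yields as a by-product the boundedness (and convergence) of the multipliers, which can be useful elsewhere. Both rely on the subcritical growth \eqref{Eq:Potential0} through the same compactness mechanism for the nonlinear term.
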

\begin{proof} Assume that $(u_n)$ is a Palais-Smale sequence at level $c$. Observe that, writing equations \eqref{Eq:DefPS} and \eqref{Eq:DefPS0} explicitly, we get:
\begin{equation}\label{Eq:PSLS}
\frac{\varepsilon ^{2}}{2}\int_{\Omega} \left\vert
\nabla u_n\right\vert ^{2}\,\mathrm dx+\int_{\Omega} W\big(u_n(x)\big)\,\mathrm dx\longrightarrow c,\quad\text{as $n\to\infty$,}
\end{equation}
\begin{equation}\label{Eq:PSLS0}
-\varepsilon^2\Delta u_n+W'(u_n)=\lambda_n+T_n,
\end{equation}
where $\lim\limits_{n\to\infty}T_n=0$ strongly in $H^{-1}(\Omega)$. Then by \eqref{Eq:PSLS} and the assumptions \eqref{Eq:Potential}, \eqref{Eq:Potential0}, we obtain
\begin{equation}\begin{aligned} 
c+1 & \ge  \frac{\varepsilon ^{2}}{2}\int_{\Omega} \left\vert\nabla u_n\right\vert ^{2}\,\mathrm dx+\int_{\Omega} W\big(u_n(x)\big)\,\mathrm dx\\
 & \ge   \frac{\varepsilon ^{2}}{2}\int_{\Omega} \left\vert\nabla u_n\right\vert ^{2}\,\mathrm dx-k\int_\Omega u\,\mathrm dx\\
 & =  \frac{\varepsilon ^{2}}{2}\int_{\Omega} \left\vert\nabla u_n\right\vert ^{2}\mathrm dx-kV.
\end{aligned}
\end{equation}
Then $\left\vert\nabla u_n\right\vert$ is bounded in $L^2$ and hence by the Poincar\'e inequality, $u_n$ is bounded in $H_0^1(\Omega)$, so there exists $u\in H_0^1(\Omega)$ such that $u_n\rightharpoonup u$. We have to show that $u_n\to u$ strongly in $\mathfrak{M}_{\varepsilon,c}^{V}$.  It is well known (Nemytskii's theorem) that by \eqref{Eq:Potential0}, the map
\[W'\colon H_0^1(\Omega)\ni u\longmapsto W\circ u\in H^{-1}(\Omega)\]
is a compact nonlinear operator. 
Thus $W'(u_n)\to W'(u)$ strongly in $H^{-1}(\Omega)$. Multiplying \eqref{Eq:PSLS0} by $u_n$ and using the constraints $\int u=V$ we get that $\lambda_n$ is a bounded sequence. So, up to a subsequence, we can assume that $\lambda_n\to\lambda$. 

Now, recalling that $\Delta^{-1}\colon H^{-1}(\Omega)\to H_0^1(\Omega)$ is an isomorphism, we obtain that 
\[u_n=\tfrac1{\varepsilon^2}(-\Delta^{-1})\left[\lambda_n-W'(u_n)+T_n\right]\] is a convergent sequence in $\mathfrak M^V$. This concludes the proof.
\end{proof}

For any open set $\mathcal{U}\subseteq\mathds{R}^N$, we denote by $\overline{\mathcal U}$ its closure, and we define
\[\mathfrak{M}^{V}(\mathcal{U})=\Big\{u\in H^1(\mathds{R}^N):\supp(u)\subseteq\overline{\mathcal{U}},\ \smallint u=V\Big\}.\]
Moreover, we set:
\begin{equation}\label{eq:defenergysublevel}
\begin{aligned}
& m(\varepsilon,\rho, V)=\inf\big\{E_{\varepsilon}(u):u\in\mathfrak{M}^{V}\big(B_\rho(0)\big),\ u\ge0\big\},\\[.2cm]
& 
m^*(\varepsilon,\rho, V)=\inf\big\{E_{\varepsilon}(u):u\in\mathfrak{M}^{V}\big(\mathds{R}^N\setminus B_\rho(0)\big),\ \beta(u)=0,\ u\ge0\big\},
\end{aligned}
\end{equation}
where $\beta(u)$ is defined in \eqref{Eq:DefBarycenter}.
\begin{lemma} For every $V\in\left]0,V_1\right[$, $\varepsilon\in\left]0,\varepsilon_1(V)\right[$ and for all $\rho>0$, the following  inequality holds: 
\begin{equation}\label{Eq:ForLemmaBarycentcerStatement}
m^*(\varepsilon, \rho, V)>m(\varepsilon,\varepsilon R_\gamma, V),\end{equation}
where $\gamma=\frac V{\varepsilon^N}$ and $R_\gamma$ is the radius of the closed ball that supports $U_\gamma$, see \eqref{Eq:AsymptoticProblemStatement1}.
\end{lemma}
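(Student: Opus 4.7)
The idea is to rescale both sides to the auxiliary problem $(\mathrm P_\gamma)$ and compare the resulting infima via the minimizing property of $U_\gamma$, Schwartz symmetrization, and the Brothers--Ziemer rigidity theorem.

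Setting $v(y)=u(\varepsilon y)$ one verifies $\int v=V/\varepsilon^N=\gamma$, $E_\varepsilon(u)=\varepsilon^N E(v)$, $\supp(v)=\varepsilon^{-1}\supp(u)$, and $\beta(u)=\varepsilon\beta(v)$. The bound $\varepsilon\le\varepsilon_1(V)$ from \eqref{Eq:EpsilonUpperBoundInFunctionOfVolume} yields $\gamma\ge\widetilde\gamma_0$, so Theorem~\ref{Thm:AsymptoticProblemStatement} applies and produces the radial minimizer $U_\gamma\in K_\gamma$ of $E$, with $\supp(U_\gamma)=\overline{B_{R_\gamma}(0)}$ and $\int U_\gamma=\gamma$. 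The rescaled form of the lemma then reads
\[
\inf\Big\{E(v): v\in \mathfrak{M}^\gamma\bigl(\mathds R^N\setminus B_{\rho/\varepsilon}(0)\bigr),\ v\ge 0,\ \beta(v)=0\Big\}>E(U_\gamma).
\]
Any admissible $v$ lies in $K_\gamma$, hence $E(v)\ge E(U_\gamma)$ by Theorem~\ref{Thm:AsymptoticProblemStatement}; the same remark identifies $m(\varepsilon,\varepsilon R_\gamma,V)=\varepsilon^N E(U_\gamma)$, since $U_\gamma$ is itself admissible for the left-hand side.

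The strict inequality is proved by contradiction. Let $(v_n)$ be a minimizing sequence with $E(v_n)\to E(U_\gamma)$. Their Schwartz rearrangements $v_n^*$ satisfy $E(v_n^*)\le E(v_n)$ by Lemma~\ref{Lemma:EnergyDecreasingUnderScwartzSymmetrization}, so $(v_n^*)$ is a radial minimizing sequence for $E$ in $K_\gamma$. Arguing exactly as in the proof of Theorem~\ref{Thm:AsymptoticProblemStatement} (uniform $H^1$-bound, Strauss compact embedding for radial functions, weak l.s.c.\ of the Dirichlet energy, weak continuity of the Nemytskii operator associated to $W$), I extract $v_n^*\rightharpoonup V_*\in K_\gamma$, a radial minimizer. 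Re-running the proof of Theorem~\ref{Thm:AsymptoticProblemStatement} with $V_*$ in place of $U_\gamma$ then gives $\supp(V_*)=\overline{B_{R_*}(0)}$ for some $R_*>0$, $V_*\in C^{1,\alpha}(\mathds R^N)\cap C^2(B_{R_*}(0))$, and Theorem~\ref{Thm:GidasNiNirenberg} applied to $V_*\vert_{B_{R_*}(0)}$ shows that $V_*$ is strictly radially decreasing on its positivity set; in particular $V_*$ satisfies hypothesis \eqref{Eq:BrothersZiemerStatement0} of Theorem~\ref{Thm:BrothersZiemer}.

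The final step is to transfer the information from $v_n^*$ back to $v_n$, and this is the main obstacle. The plan is a concentration-compactness argument: the fixed mass $\int v_n=\gamma>0$ rules out vanishing, and the strict sub-additivity $E(U_{\gamma_1+\gamma_2})<E(U_{\gamma_1})+E(U_{\gamma_2})$ for every $\gamma_1,\gamma_2>0$, which follows from the scaling identity \eqref{Eq:AsymptoticProblemProof-+-} together with $A_{U_\gamma}>0$ and the sub-additivity of $s\mapsto s^{(N-2)/N}$, rules out dichotomy. Hence $v_n$ converges strongly (up to translation) to some admissible $\bar v$, which is in turn a minimizer of $E$ on $K_\gamma$. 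Applying Theorem~\ref{Thm:BrothersZiemer} to $\bar v$ and its rearrangement $\bar v^*$---which, by what precedes, satisfies \eqref{Eq:BrothersZiemerStatement0}---forces $\bar v$ to be a translate of $\bar v^*$; combined with $\beta(\bar v)=0$ this gives $\bar v=\bar v^*$, a function radially strictly decreasing from the origin, in contradiction with $\supp(\bar v)\subset\mathds R^N\setminus B_{\rho/\varepsilon}(0)$.
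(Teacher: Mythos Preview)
Your rescaling reduction and the identification $m(\varepsilon,\varepsilon R_\gamma,V)=\varepsilon^N E(U_\gamma)$ match the paper exactly. The paper, however, takes a much shorter route to the strict inequality: it simply \emph{assumes} that the exterior infimum is attained by some $w$ with $E(w)=E(U_\gamma)$, and then a single application of Brothers--Ziemer (Theorem~\ref{Thm:BrothersZiemer}) to $w$ and $w^*$, together with $\beta(w)=0$, forces $w=w^*$ and yields a contradiction in two lines. You evidently noticed that the paper does not justify the existence of such a minimizer, and your concentration--compactness outline is an attempt to supply it.

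That attempt, though well-motivated, has two genuine gaps. First, the strict subadditivity $E(U_{\gamma_1+\gamma_2})<E(U_{\gamma_1})+E(U_{\gamma_2})$ is not a consequence of the scaling identity \eqref{Eq:AsymptoticProblemProof-+-}: that identity describes how $E$ changes when a \emph{fixed} profile $U_\gamma$ is dilated, and says nothing about how the minimal value over $K_\gamma$ depends on $\gamma$. An upper bound $I(\gamma)\le a\gamma^{(N-2)/N}+b\gamma$ obtained by scaling a fixed minimizer does not give subadditivity of $I$ itself; you would also need a matching lower bound. Moreover, Theorem~\ref{Thm:AsymptoticProblemStatement} controls $E(U_\gamma)$ only for $\gamma\ge\widetilde\gamma_0$, so dichotomy into one small and one large piece is not excluded by your inequality in any case.

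Second, even granting compactness, the constraints defining $m^*$---support in the complement of a \emph{fixed} ball centered at the origin and barycenter equal to $0$---are not translation-invariant, so a limit $\bar v$ extracted ``up to translation'' need not satisfy either. And even without translation, passing $\beta(v_n)=0$ to the limit requires tightness of the first moments $\int_{\mathds R^N} |x|\,v_n\,\mathrm dx$, which you have not established from the information available. In short, the paper's argument is lighter (at the cost of an unproved existence step); your strategy to repair that step would need substantially more work on both the subadditivity and the constraint-preservation fronts before it becomes a proof.
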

\begin{proof} Let $\widetilde{\gamma}_0>0$ be as in Theorem~\ref{Thm:AsymptoticProblemStatement}; by \eqref{Eq:AsymptoticProblemStatement1}, for all $\rho>0$ and for all $\gamma\ge\gamma_0$, the following holds:
\begin{eqnarray*}
 E(U_\gamma) & = & \min\left\{E(u):u\in\mathfrak{M}^{\gamma}\big(B_{R_\gamma}(0)\big),\ u\ge0\right\}\\
&=& \min\left\{E(u):u\in\mathfrak{M}^{\gamma}(\mathds R^N),\ u\ge0\right\}\\
& \le & \min\left\{E(u):u\in\mathfrak{M}^{\gamma}\big(\mathds{R}^N\setminus B_{\rho}(0)\big),\ \beta(u)=0,\ u\ge0\right\}.
\end{eqnarray*}
Next, we will show that this inequality is strict. We argue indirectly and we assume that $w$ is a minimizer of $E$ over the set 
\[\left\{u\in\mathfrak{M}^{\gamma}\big(\mathds{R}^N\setminus B_{\rho}(0)\big): \beta(u)=0,\ u\ge0\right\},\] and that
\[E(U_{\gamma})=E(w).\] 
Thus, $w$ is a map with barycenter at $0$, and with support contained in the exterior of a ball centered at $0$.
Denote by $w^*$ the symmetric decreasing rearrangement of $w$ in $\mathds{R}^N$, see Definition~\ref{Def:SymmetricDecreasingRearrangement}. Clearly, $\beta(w^*)=0$, because $w^*$ is radially symmetric, and $w^*\ne w$, because the support of a decreasing rearrangement is always a ball centered at the orgin. By Lemma~\ref{Lemma:EnergyDecreasingUnderScwartzSymmetrization}, $E(w^*)\le E(w)$. We cannot have $E(w^*)=E(w)$, because if such equality holds, then by Theorem~\ref{Thm:BrothersZiemer} (whose application is allowed by the fact that a classical result of Gidas-Ni-Nirenberg, i.e., Theorem \ref{Thm:GidasNiNirenberg} ensures the validity of \eqref{Eq:BrothersZiemerStatement0}) we would have $w^*=w(\cdot+x_0)$, and so $\beta(w^*)=\beta\big(w(\cdot+x_0)\big)=x_0=0$, which contradicts the fact that $w\ne w^*$. 
This implies $E(w^*)<E(w)$, which gives the following contradiction:
\[E(U_{\gamma})=E(w)>E(w^*)\ge E(U_{\gamma}),\] 
and therefore it shows that:
\begin{equation}\label{eq:epsilon1+}
m^*(1,\rho, \gamma)>m(1, R_\gamma, \gamma).
\end{equation}

Given $u\in H^1(\mathds R^N)$, set $u_\varepsilon(x):=u\left(\varepsilon x\right)$. It is immediate to see that $E_\varepsilon(u)=\varepsilon^NE(u_\varepsilon)$, which implies immediately:
\begin{equation}\label{eq:Eeps}
m(\varepsilon,r, V)=\varepsilon^N\,m\left(1,\tfrac r\varepsilon, \tfrac V{\varepsilon^N}\right)\quad\text{and}\quad
m^*(\varepsilon,r, V)=\varepsilon^N\,m^*\left(1,\tfrac r\varepsilon, \tfrac V{\varepsilon^N}\right).
\end{equation}
Set $\gamma=\frac V{\varepsilon^N}$; by our choice of $\varepsilon_1$, it is $\gamma\ge\tilde{\gamma}_0$,
thus inequality \eqref{Eq:ForLemmaBarycentcerStatement} follows readily from \eqref{eq:epsilon1+} and \eqref{eq:Eeps}. Namely:
\begin{eqnarray*}
m^*(\varepsilon,r,V) & \stackrel{\text{by \eqref{eq:Eeps}}}{=} & \varepsilon^Nm^*\left(1,\tfrac r\varepsilon, \tfrac V{\varepsilon^N}\right)\\
& \stackrel{\text{by \eqref{eq:epsilon1+}}}{>} & \varepsilon^Nm\left(1,R_\gamma, \tfrac V{\varepsilon^N}\right)\\
& \stackrel{\text{by \eqref{eq:Eeps}}}{=} & m(\varepsilon, \varepsilon R_\gamma, V).\qedhere
\end{eqnarray*}
\end{proof}
\begin{remark} From Theorem \ref{Thm:AsymptoticProblemStatement} it is easy to check that $m(1,\rho, \gamma)=E(U_\gamma)$ for every $\rho\ge R_\gamma$. 
\end{remark}
\begin{lemma}\label{Lemma:WellPosedf}  
Given $\varepsilon\in\left]0,\varepsilon_1(V)\right[$, $V\in\left]0,V_1\right[$, and setting:
\begin{equation}\label{eq:decc}
c=c(\varepsilon,V, N,W)=m(\varepsilon, \varepsilon R_\gamma, V),
\end{equation}
where $\gamma=\frac V{\varepsilon^N}$ and $R_\gamma$ is as in \eqref{Eq:AsymptoticProblemStatement1}, then $E_{\varepsilon}^c\cap\mathfrak{M}^{V}$ in nonempty, and the map $f\colon\Omega\to E_{\varepsilon}^c\cap\mathfrak{M}^{V}$ is well defined.
\end{lemma}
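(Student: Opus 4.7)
The plan is to verify directly from the explicit construction that, for every $x_0 \in \Omega_r^-$, the function $\Phi_{\varepsilon,V}^{x_0}$ belongs to $E_\varepsilon^c\cap\mathfrak M^V$; since $f_0$ takes values in $\Omega_r^-$, this is enough to conclude both that the sublevel set is nonempty and that $f=\Phi_{\varepsilon,V}\circ f_0$ is well defined. Concretely, I would check three things: (a) $\supp(\Phi_{\varepsilon,V}^{x_0})\subseteq\Omega$, so that $\Phi_{\varepsilon,V}^{x_0}\in H^1_0(\Omega)$; (b) $\int_\Omega\Phi_{\varepsilon,V}^{x_0}\,\mathrm dx=V$; and (c) $E_\varepsilon(\Phi_{\varepsilon,V}^{x_0})\le c$.

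For (a), I would start from \eqref{Eq:AsymptoticProblemStatement1}, which gives $\supp(U_\gamma)=\overline{B_{R_\gamma}(0)}$, hence $\supp(\Phi_{\varepsilon,V}^{x_0})=\overline{B_{\varepsilon R_\gamma}(x_0)}$. The hypothesis $\varepsilon<\varepsilon_1(V)=(V/\widetilde\gamma_0)^{1/N}$ forces $\gamma:=V/\varepsilon^N>\widetilde\gamma_0$, placing us in the regime where Theorem~\ref{Thm:AsymptoticEquivalentOfTheRadius} applies and gives $R_\gamma<C^+\gamma^{1/N}$. Consequently,
\[
\varepsilon R_\gamma<C^+\varepsilon\gamma^{1/N}=C^+V^{1/N}<C^+V_1^{1/N}=r,
\]
where the last equality uses the explicit formula for $V_1$ in \eqref{Eq:b}. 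Since $\mathrm{dist}(x_0,\partial\Omega)>r$, this forces $\overline{B_{\varepsilon R_\gamma}(x_0)}\subset\Omega$, and combined with the $C^{1,\alpha}$-regularity of $U_\gamma$ from Theorem~\ref{Thm:AsymptoticProblemStatement}, one concludes $\Phi_{\varepsilon,V}^{x_0}\in H^1_0(\Omega)$.

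Both (b) and (c) will follow from the same change of variables $y=(x-x_0)/\varepsilon$, with Jacobian $\varepsilon^N$. For (b), using \eqref{Eq:AsymptoticProblemStatement0+},
\[
\int_\Omega\Phi_{\varepsilon,V}^{x_0}(x)\,\mathrm dx=\varepsilon^N\int_{\mathds R^N}U_\gamma(y)\,\mathrm dy=\varepsilon^N\gamma=V.
\]
For (c), the standard scaling identity already invoked as \eqref{eq:Eeps} in the preceding lemma gives $E_\varepsilon(\Phi_{\varepsilon,V}^{x_0})=\varepsilon^N E(U_\gamma)$. Since $U_\gamma$ minimizes $E$ over the convex set $K_\gamma$, and $\{u\in\mathfrak M^\gamma(B_{R_\gamma}(0)):u\ge 0\}\subseteq K_\gamma$, we have $E(U_\gamma)=m(1,R_\gamma,\gamma)$. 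Hence $E_\varepsilon(\Phi_{\varepsilon,V}^{x_0})=\varepsilon^N m(1,R_\gamma,\gamma)=m(\varepsilon,\varepsilon R_\gamma,V)=c$, in particular $\le c$, as required.

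No step in this plan presents a serious obstacle: the whole argument amounts to combining the support information from Theorem~\ref{Thm:AsymptoticProblemStatement}, the radius estimate from Theorem~\ref{Thm:AsymptoticEquivalentOfTheRadius}, and the standard parabolic scaling of the Dirichlet and potential energies. The only subtle point is the inclusion $\overline{B_{\varepsilon R_\gamma}(x_0)}\subset\Omega$, and the thresholds $V_1$ in \eqref{Eq:b} and $\varepsilon_1(V)$ in \eqref{Eq:EpsilonUpperBoundInFunctionOfVolume} have been calibrated precisely so that this inclusion holds automatically.
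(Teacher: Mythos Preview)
Your proposal is correct and follows essentially the same route as the paper's own proof: establish $\varepsilon R_\gamma<r$ via the radius estimate of Theorem~\ref{Thm:AsymptoticEquivalentOfTheRadius} together with the calibration of $V_1$ and $\varepsilon_1(V)$, deduce that $\supp(\Phi_{\varepsilon,V}^{x_0})\subset\Omega$ for $x_0\in\Omega_r^-$, and then compute the volume and energy by the change of variables $y=(x-x_0)/\varepsilon$. The only cosmetic differences are that you invoke the scaling relation \eqref{eq:Eeps} rather than redoing the change of variables explicitly, and you add the remark on $C^{1,\alpha}$ regularity to justify membership in $H^1_0(\Omega)$; neither alters the substance of the argument.
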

\begin{proof} 
By \eqref{Eq:EpsilonUpperBoundInFunctionOfVolume}, $\gamma>\widetilde\gamma_0$, and we obtain:
\[\varepsilon\cdot R_\gamma\stackrel{\text{by \eqref{Eq:AsymptoticProblemStatementStructuralConditionToMakePhotographyWorks}}}<\varepsilon\cdot C^+\cdot\gamma^\frac1N=C^+\cdot V^\frac1N\le C^+\cdot V_1^\frac1N\stackrel{\text{by \eqref{Eq:b}}}<r.\] 
From this inequality and the definition of $\Phi_{\varepsilon,\gamma}$, it is immediate to deduce that \[\supp\big(\Phi_{\varepsilon,\gamma}^{x_0}\big)=\overline{B_{\mathds{R}^N}(x_0,\varepsilon R_\gamma)}\subseteq\Omega\] for every $x_0\in\Omega^-_r$. Now, using an elementary change of variables in the integrals we obtain: 
\begin{multline*}
E_\varepsilon\big(\Phi_{\varepsilon,\gamma}^{x_0}\big) = \frac{\varepsilon^{2}}{2}\int_{\Omega} \left\vert
\nabla_x U_{\gamma}\left(\tfrac{x-x_0}{\varepsilon}\right)\right\vert ^{2}\,\mathrm dx+\int_{\Omega} W\left(U_{\gamma}\left(\tfrac{\ x-x_0}{\varepsilon}\right)\right)\,\mathrm dx\\
= \frac{\varepsilon^{2}}{2}\int_{B_{\mathds{R}^N}(0,R_\gamma)} \big\vert
\nabla_y U_{\gamma}\left(y\right)\big\vert ^{2}\varepsilon^{N-2}\;\mathrm dy+ \int_{B_{\mathds{R}^N}(0,R_\gamma)} W\big(U_{\gamma}\left(y\right)\big)\varepsilon^N\,\mathrm dy\\
= \varepsilon^NE[U_\gamma]=\varepsilon^N\,m(1,R_\gamma, \gamma)= m(\varepsilon, \varepsilon R_\gamma, V)=c,
\end{multline*}
and 
\begin{equation}
\int_{\mathds R^N}\Phi_{\varepsilon,\gamma}^{x_0}\,\mathrm dx=\varepsilon^N\int_{\mathds R^N} U_{\gamma}\,\mathrm dx=\varepsilon^N\gamma=V.
\end{equation}
Hence, $\Phi_{\varepsilon,\gamma}^{x_0}\in E_{\varepsilon}^c\cap\mathfrak{M}^{V}$, (in particular  $E_{\varepsilon}^c\cap\mathfrak{M}^{V}\ne\emptyset$) and we are done.
\end{proof}
\begin{lemma}\label{Lemma:WellPosedBarycenters} For $V\in\left]0,V_1\right[$, $\varepsilon\in\left]0,\varepsilon_1(V)\right[$, the function $g\colon E_{\varepsilon}^c\cap\mathfrak{M}^{V}\to\Omega$ is well defined, i.e., if $u\in\mathfrak{M}^V$, $E_{\varepsilon}(u)\le c=m(\varepsilon, \varepsilon R_\gamma, V)
$ where $\gamma=\frac V{\varepsilon^N}$, we have $\beta(u)\in\Omega^+_r$. 
\end{lemma}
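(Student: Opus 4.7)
The plan is a proof by contradiction that uses the strict inequality $m^{*}(\varepsilon,r,V)>m(\varepsilon,\varepsilon R_\gamma,V)=c$ established in the previous lemma.

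Suppose $u\in E_\varepsilon^c\cap\mathfrak{M}^V$ satisfies $\beta(u)\notin\Omega_r^+$, and set $x_0:=\beta(u)$; then $\mathrm{dist}(x_0,\overline\Omega)\ge r$. First I would translate the problem to the origin: set $\tilde u(y):=u(y+x_0)$, extended by zero outside $\overline\Omega-x_0$. A change of variables in the definition of $\beta$ gives $\beta(\tilde u)=\beta(u)-x_0=0$, $\int\tilde u=V$, $E_\varepsilon(\tilde u)=E_\varepsilon(u)\le c$, and $\supp(\tilde u)\subset\overline\Omega-x_0\subset\mathds R^N\setminus B_r(0)$, since every $y\in\overline\Omega-x_0$ has $|y|\ge\mathrm{dist}(x_0,\overline\Omega)\ge r$.

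Next I would pass from $\tilde u$ to a nonnegative competitor $w$ admissible in the infimum defining $m^{*}(\varepsilon,r,V)$. The natural choice is $w:=|\tilde u|$: one has $w\ge 0$, $\supp(w)=\supp(\tilde u)\subset\mathds R^N\setminus B_r(0)$, $\beta(w)=\beta(\tilde u)=0$ (because $\beta$ depends only on $|\cdot|$), and $\int|\nabla w|^2=\int|\nabla\tilde u|^2$ almost everywhere. The bound $E_\varepsilon(w)\le E_\varepsilon(\tilde u)$ then reduces to comparing $\int W(w)$ with $\int W(\tilde u)$ on $\{\tilde u<0\}$, for which one exploits the property $W\ge 0$ on $(-\infty,0]$ (derived from \eqref{Eq:Potential} in Section~\ref{sub:discussion}) together with the asymmetric location of the single negative well of $W$ on the positive half-line. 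If $\tilde u$ changes sign so that $\int w$ strictly exceeds $V$, a mass correction --- for example by passing instead to $u^+$, which satisfies $E_\varepsilon(u^+)\le E_\varepsilon(u)\le c$ by the same nonnegativity of $W$ on negatives and has mass $\int u^+\ge V$, or via a small rescaling --- brings the mass back to $V$ without raising the energy past $c$.

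Plugging $w$ into the infimum defining $m^{*}(\varepsilon,r,V)$ and chaining the estimates then yields
\[
c\ge E_\varepsilon(u)=E_\varepsilon(\tilde u)\ge E_\varepsilon(w)\ge m^{*}(\varepsilon,r,V)>c,
\]
a contradiction; hence $\beta(u)\in\Omega_r^+$ and $g=g_0\circ\beta$ is well defined on $E_\varepsilon^c\cap\mathfrak{M}^V$. The main obstacle is the sign-and-volume reduction: establishing $\int W(|\tilde u|)\le\int W(\tilde u)$ on the negative set of $\tilde u$ and controlling the mass correction both rely on the asymmetric structure of $W$ together with the low-energy regime imposed by $\varepsilon\le\varepsilon_1(V)$. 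Once that reduction is in place, the translation-and-compare skeleton closes rapidly via the previous lemma.
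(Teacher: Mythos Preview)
Your contradiction skeleton is exactly the paper's: assume $\bar x=\beta(u)\notin\Omega_r^+$, observe that then $\Omega\subset\mathds R^N\setminus B_r(\bar x)$, translate so that the barycenter sits at the origin, and invoke the strict inequality $m^*(\varepsilon,r,V)>m(\varepsilon,\varepsilon R_\gamma,V)=c$ from the previous lemma to reach a contradiction. The paper's own proof is a two-liner that simply writes $m^*(\varepsilon,r,V)\le E_\varepsilon(\bar u)\le c$ without further comment.

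The extra work you insert---the sign-and-mass reduction to a nonnegative competitor---addresses a genuine issue (the definition of $m^*$ in \eqref{eq:defenergysublevel} carries the constraint $u\ge0$, which $\bar u$ need not satisfy), but your proposed fixes do not close it. Taking $w=|\tilde u|$ preserves the barycenter and the gradient energy, yet the claimed inequality $\int W(|\tilde u|)\le\int W(\tilde u)$ on $\{\tilde u<0\}$ does \emph{not} follow from $W\ge0$ on $(-\infty,0]$: on that set you need $W(-\tilde u)\le W(\tilde u)$ with $-\tilde u>0$, and for small positive arguments $W$ is also positive, so no general comparison holds. Switching instead to $u^+$ does give $E_\varepsilon(u^+)\le E_\varepsilon(u)$ (precisely because $W\ge0$ on negatives), but $\beta(u^+)\ne\beta(u)$ in general, since $\beta$ is defined through $|u|$, not through $u^+$; you then lose the barycenter constraint that $m^*$ requires. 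Finally, ``mass correction by a small rescaling'' is too vague: a dilation $u(\cdot/\rho)$ simultaneously changes the support (possibly pushing it back into $B_r(0)$), the gradient term, and the potential term with different homogeneities, so restoring the mass to exactly $V$ while keeping both the energy below $c$ and the support outside $B_r(0)$ is not automatic.

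In short, your plan coincides with the paper's at the level of the main idea; the admissibility step for $m^*$---which the paper itself glosses over---is not repaired by the devices you propose.
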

\begin{proof} 
Let us argue by contradiction, assuming that there exists $\overline {u}\in E_{\varepsilon}^c\cap\mathfrak{M}^{V}$ such that $\overline{x}:=\beta(\overline{u})\notin\Omega^+_r$. Then, $\Omega\subset\mathds{R}^N\setminus B_r(\bar{x})$, and therefore
\[m^*(\varepsilon,r, V)\le E_{\varepsilon}(\bar{u})\le c=m(\varepsilon, \varepsilon R_\gamma, V).
\] 
This contradicts \eqref{Eq:ForLemmaBarycentcerStatement}, and concludes the proof. 
\end{proof}
We are now ready to finalize the proof of our main results.
\begin{proof}[Proof of Theorem~\ref{Thm:Main1}] 
It is sufficient to verify assumptions \eqref{Assumption:Thm:FotoLS}, \eqref{Assumption:Thm:FotoLS0}, \eqref{Assumption:Thm:FotoLS1} of Theorems \ref{Thm:FotoLS} and \ref{Thm:FotoMorse} in our variational framework.
For assumption~\eqref{Assumption:Thm:FotoLS} see \eqref{eq:Eeboundedbelow}. Assumption~\eqref{Assumption:Thm:FotoLS0} follows from Lemma \ref{Lemma:PS}. Assumptions~\eqref{Assumption:Thm:FotoLS1} follows from Lemmas \ref{Lemma:WellPosedf} and \ref{Lemma:WellPosedBarycenters}.
As to the last statement of Theorem~\ref{Thm:FotoLS}, note that $\mathfrak M^V$ is contractible. Namely, it is an affine (closed) subspace of $H^1_0(\Omega)$, see \eqref{eq:defMV}.
\end{proof}
\begin{proof}[Proof of Proposition~\ref{thm:boundsenergyindex}]
For every $V\in\left]0,V_1\right[$ and all $\varepsilon\in\left]0,\varepsilon(V_1)\right[$, $\cat(\Omega)$ (resp., $P_1(\Omega)$ in the nondegenerate case) solutions of  problem ($\mathrm P_{V,\varepsilon}$) are found in the energy sublevel $m(\varepsilon,\varepsilon R_{\gamma(\varepsilon,V)},V)$, where $\gamma(\varepsilon,V)=\frac V{\varepsilon^N}$, recall formula~\eqref{eq:defenergysublevel}. Thus, a proof of Proposition~\ref{thm:boundsenergyindex} is obtained by showing that \[\limsup\limits_{\varepsilon\to0}m(\varepsilon,\varepsilon R_{\gamma(\varepsilon,V)},V)<+\infty.\]
This follows readily from the very definition of $m(\varepsilon,\rho,V)$, see \eqref{eq:defenergysublevel}, observing that, by \eqref{Eq:AsymptoticProblemStatementStructuralConditionToMakePhotographyWorks}, the quantity $\varepsilon R_{\gamma(\varepsilon,V)}$ is bounded as $\varepsilon\to0$. In the nondegenerate case, the statement about the boundedness of the Morse index of the low energy solutions follows readily from the observation in Remark~\ref{rem:morseindexlowenergy}.
\end{proof}
\appendix
\section{Auxiliary results: a priori estimates}\label{sec:newassumpt}
\label{sec:apriori}
For the reader's convenience, in this appendix we give the statement and a short proof of some a priori estimates for solutions of elliptic PDE's, that were used in the paper.

Let us consider the elliptic PDE:
\begin{equation}\label{eq:PDE}
-\Delta u+G'(u)=0
\end{equation}
on a bounded domain $\Omega\subset\mathds R^n$, with $G\colon\mathds R\to\mathds R$ a function of class $C^2$ satisfying $G(0)=0$ and
\begin{equation}\label{eq:hpG}
\big\vert G'(s)\big\vert\le A+B\vert s\vert^{p-1},
\end{equation} 
for some positive constants $A,B$ and for some $p<\dfrac{2n}{n+2}$.\smallskip

A weak solution $u\in H^1_0(\Omega)$ of \eqref{eq:PDE} is a critical point\footnote{By standard elliptic regularity, such a weak solution $u$ belongs to $H^3(\Omega)$.} of the functional $E\colon H^1_0(\Omega)\to\mathds R$ defined by:
\begin{equation}\label{eq:energy}
E(u)=\int_\Omega\left[\tfrac12\Vert\nabla u\Vert^2+G(u)\right]\,\mathrm dx,
\end{equation}
i.e., it satisfies:
\begin{equation}\label{eq:weaksol}
\phantom{ v\in H_0^1(\Omega).}\mathrm dE(u)v=\int_\Omega\left[\nabla u\cdot\nabla v+G'(u)v\right]\,\mathrm dx=0,\qquad\forall\, v\in H_0^1(\Omega).
\end{equation}
Assumption \eqref{eq:hpG} implies that $E$ is a well defined $C^1$-functional in $H^1_0(\Omega)$ and that $\mathrm dE(u)$ in \eqref{eq:weaksol} is a bounded linear operator on $H^1_0(\Omega)$.
\begin{proposition}\label{thm:bounds}
Let $u\in H^1_0(\Omega)$ be a solution of \eqref{eq:weaksol}.
Assume that there exists $s_-<0$
\begin{equation}\label{eq:hpderG}
G'(s)<0\quad\text{for}\ s\le s_-
\end{equation}
Then, \[\phantom{,\quad\text{a.e.\ in}\ \Omega.}s_-\le u\,\quad\text{a.e.\ in}\ \Omega.\]
Similarly, if there exists $s_+>0$ such that \begin{equation}\label{eq:hpderG2} G'(s)>0,\quad \text{for $s\ge s_+$},\end{equation}
then \[
\phantom{,\quad\text{a.e.\ in}\ \Omega.}u\le s_+,\quad\text{a.e.\ in}\ \Omega.\]
\end{proposition}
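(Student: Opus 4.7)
The proof I have in mind is a standard Stampacchia-type truncation argument, using the weak formulation \eqref{eq:weaksol} against a carefully chosen test function.

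For the lower bound, my plan is to test \eqref{eq:weaksol} against $v:=(u-s_-)^-=\max\{s_--u,0\}$. The first thing I would verify is that this is an admissible test function, i.e., $v\in H_0^1(\Omega)$. This relies on the hypothesis $s_-<0$: since $u\in H_0^1(\Omega)$ has zero boundary trace, the trace of $u-s_-$ is the positive constant $-s_->0$, hence $(u-s_-)^-$ has zero trace. The standard chain rule for Sobolev functions then gives $\nabla v=-\nabla u\cdot\chi_{\{u<s_-\}}$ almost everywhere.

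Substituting $v$ into \eqref{eq:weaksol} yields
\begin{equation*}
-\int_{\{u<s_-\}}\vert\nabla u\vert^2\,\mathrm dx+\int_{\{u<s_-\}}G'(u)\,(s_--u)\,\mathrm dx=0.
\end{equation*}
The first term is nonpositive. On the set $\{u<s_-\}$, the hypothesis \eqref{eq:hpderG} gives $G'(u)<0$, while the factor $s_--u$ is strictly positive; hence the integrand of the second term is strictly negative on $\{u<s_-\}$, so the second term is also nonpositive. Since the sum vanishes, each term must vanish; in particular the strict negativity of the integrand of the second term forces $\big\vert\{u<s_-\}\big\vert=0$, which is exactly $u\ge s_-$ a.e.

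The upper bound is proved by the entirely symmetric argument: test \eqref{eq:weaksol} against $v:=(u-s_+)^+\in H_0^1(\Omega)$ (admissibility again uses $s_+>0$, so that the trace of $u-s_+$ is $-s_+<0$), and invoke \eqref{eq:hpderG2} to see that the two resulting integrals have opposite signs unless $\big\vert\{u>s_+\}\big\vert=0$. There is essentially no obstacle here beyond bookkeeping of signs and justifying that the truncation belongs to $H_0^1(\Omega)$; the subcritical growth \eqref{eq:hpG} is only used implicitly to ensure that the integrals $\int_\Omega G'(u)v\,\mathrm dx$ are well defined, which is automatic once $u\in H_0^1(\Omega)$ and $v\in L^\infty\cdot H_0^1(\Omega)$.
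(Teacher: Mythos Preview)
Your argument is correct and is essentially identical to the paper's own proof: the paper also tests \eqref{eq:weaksol} against $v=(u-s_-)^-$ (respectively $v=(u-s_+)^+$) and derives a contradiction from the sign of $G'$ on the coincidence set. The only cosmetic difference is that the paper multiplies your displayed identity by $-1$, writing it as $\int_{\{u\le s_-\}}\big[\Vert\nabla u\Vert^2+G'(u)(u-s_-)\big]\,\mathrm dx=0$ with both terms nonnegative.
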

\begin{proof}
For $v\colon\Omega\to\mathds R$, denote by $v^+$ and $v^-$ the nonnegative functions defined by $v^-(x)=\max\big\{-v(x),0\big\}$ and $v^+(x)=\max\big\{v(x),0\big\}$. Define:
\[\Omega_-=\big\{x\in\Omega:u\le s_-\big\}, \quad \Omega_+=\big\{x\in\Omega:u\ge s_+\big\}.\]
Set $v=(u-s_-)^-$; since $u\in H_0^1(\Omega)$, then also $v\in H^1_0(\Omega)$. Plugging such $v$ in \eqref{eq:weaksol}, we get:
\[0=\int_\Omega\nabla u\cdot\nabla(u-s_-)^-+G'(u)(u-s_-)^-\,\mathrm dx=
\int_{\Omega_-}\Vert\nabla u\Vert^2+G'(u)(u-s_-)\,\mathrm dx.\]
If $u\le s_-$ on a set of positive measure, then by \eqref{eq:hpderG} the last integral is strictly positive, giving a contradiction. Thus, $u\ge s_-$ almost everywhere.
\smallskip

Similarly, now plug $v=(u-s_+)^+$ into \eqref{eq:weaksol}:
\[0=\int_\Omega\nabla u\cdot\nabla(u-s_+)^-+G'(u)(u-s_+)^-\,\mathrm dx=
\int_{\Omega_+}\Vert\nabla u\Vert^2+G'(u)(u-s_+)\,\mathrm dx.\]
If $u\ge s_+$ on a set of positive measure, then by \eqref{eq:hpderG2} the last integral is strictly positive, giving a contradiction. Thus, $u\le s_+$ almost everywhere, which concludes the proof.
\end{proof}
\begin{remark}\label{thm:rembastapocopiugrande}
The assumptions of Proposition~\ref{thm:bounds} can be somewhat weakened if one wants to obtain bounds only for solutions $u$ of \eqref{eq:weaksol} that are \emph{minima} of the corresponding energy functional $E$ in \eqref{eq:energy}. Namely, in order to conclude that $s_-\le u$, it is not necessary to assume \eqref{eq:hpderG}. It suffices to assume that $G(s)>G(s_-)$ for $s$ in a left neighborhood of $s_-$, for in this case, if $u<s_-$ somewhere, then the function $u^-\in H^1_0(\Omega)$ defined by $u^-(x)=\max\big\{u(x),s_-\big\}$ would satisfy $E(u^-)<E(u)$, contradicting the minimality assumption for $u$. Similarly, in order to conclude that $u\le s_+$ it suffices to assume that $G(s)>G(s_+)$ for $s$ in a right neighborhood of $s_+$.
\end{remark}
Let us now consider the eigenvalue equation on $\Omega$:
\begin{equation}\label{eq:PDElambda}
-\varepsilon^2\Delta u+W'(u)=\lambda,
\end{equation}
for some $\lambda\in\mathds R$.
\begin{proposition}\label{thm:lowerestlambda}
Let $\lambda\in\mathds R$ and $u\in H_0^1(\Omega)$ be a (weak) solution of \eqref{eq:PDElambda}, with $\int_\Omega u\,\mathrm dx>0$, and satisfying $u\le s_+$ for some $s_+>0$. Then:
\[\lambda\ge w^-:=\min_{s\in[0,s_+]}W'(s).\]
\end{proposition}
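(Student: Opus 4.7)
The plan is to test the weak form of \eqref{eq:PDElambda} against the positive part $u^+$ of $u$, which lies in $H^1_0(\Omega)$ by Stampacchia's truncation lemma, and to exploit the pointwise bound $W'(u)\ge w^-$ on $\{u>0\}$, which holds precisely because $0<u\le s_+$ there. The sign condition $\int_\Omega u\,\mathrm dx>0$ will be used only at the very end, to guarantee that the test function is not trivial.

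More concretely, I would start from the weak formulation
\[
\varepsilon^2\int_\Omega\nabla u\cdot\nabla v\,\mathrm dx+\int_\Omega W'(u)\,v\,\mathrm dx=\lambda\int_\Omega v\,\mathrm dx,\qquad v\in H^1_0(\Omega),
\]
substitute $v=u^+$, and use $\nabla u\cdot\nabla u^+=|\nabla u^+|^2$ a.e.\ to obtain
\[
\varepsilon^2\int_\Omega|\nabla u^+|^2\,\mathrm dx+\int_{\{u>0\}}W'(u)\,u\,\mathrm dx=\lambda\int_\Omega u^+\,\mathrm dx.
\]
Discarding the nonnegative gradient term and estimating $W'(u)\,u\ge w^-\,u$ on $\{u>0\}$ (pointwise, using $W'(u)\ge w^-$ and multiplying by the strictly positive quantity $u$) yields $\lambda\int_\Omega u^+\,\mathrm dx\ge w^-\int_\Omega u^+\,\mathrm dx$.

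Finally, the hypothesis $\int_\Omega u\,\mathrm dx>0$ together with $u^+\ge u$ a.e.\ forces $\int_\Omega u^+\,\mathrm dx>0$, so one can divide and conclude $\lambda\ge w^-$. There is no genuine obstacle in this argument; the only point requiring care is that $w^-$ may be negative, but since the pointwise inequality $W'(u)\ge w^-$ is multiplied by a strictly positive factor $u$ on $\{u>0\}$, the direction of the inequality is preserved throughout.
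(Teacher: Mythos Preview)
Your proof is correct and follows essentially the same approach as the paper: both test the weak formulation against $u^+$, drop the nonnegative gradient term, and use the pointwise bound $W'(u)\ge w^-$ on the set where $u\ge 0$ (resp.\ $u>0$) to compare $\lambda\int u^+$ with $w^-\int u^+$. Your justification that $\int_\Omega u^+\,\mathrm dx>0$ (via $u^+\ge u$) is slightly more explicit than the paper's, but otherwise the arguments coincide.
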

\begin{proof}
The function $u$ satisfies:
\begin{equation}\label{eq:weaksoleqn}
\phantom{\quad\forall\,v\in H^1_0(\Omega).}
\varepsilon^2\int_\Omega\nabla u\cdot\nabla v\,\mathrm dx+\int_\Omega W'(u)\,v\,\mathrm dx=\lambda\,v,\quad\forall\,v\in H^1_0(\Omega).
\end{equation}
Denote by $\Omega^+=\big\{x\in\Omega:u(x)\ge0\big\}$ and observe that $\vert\Omega^+\vert>0$, because $\int_\Omega u\,\mathrm dx>0$. 
Setting $v=u^+$ in \eqref{eq:weaksoleqn} we get:
\begin{multline*}\lambda\int_{\Omega^+}u\,\mathrm dx=\varepsilon^2\int_{\Omega^+}\big\Vert\nabla u\big\Vert^2\,\mathrm dx+\int_{\Omega^+}W'(u)u\,\mathrm dx
\ge\int_{\Omega^+}W'(u)u\,\mathrm dx\\\ge w^-\int_{\Omega^+}u\,\mathrm dx.
\end{multline*}
The conclusion follows readily.
\end{proof}

\bibliographystyle{RGB-bibtex}
\bibliography{BiblioBNP}

\end{document}